\newtheorem{lemma}{Lemma}[section]
\newtheorem{prop}[lemma]{Proposition}
\newtheorem{thm}[lemma]{Theorem}
\newtheorem{cor}[lemma]{Corollary}
\theoremstyle{definition}
\newtheorem{defn}[lemma]{Definition}
\newtheorem{example}[lemma]{Example}
\theoremstyle{remark}
\newtheorem{rmk}[lemma]{Remark}
\newtheorem{nota}[lemma]{Notation}
\newcommand{\J}{\mathcal{J}}
\newcommand{\T}{\mathcal{T}}
\newcommand{\C}{\mathcal{C}}
\newcommand{\zmax}{\text{0-max}}
\newcommand{\omin}{\text{1-min}}
\newcommand{\BbZ}{\mathbb{Z}}
\newcommand{\dimh}{\mathrm{dim}_{\tt H}}
\renewcommand{\t}{{\bm t}}
\renewcommand{\epsilon}{\varepsilon}
\numberwithin{equation}{section}
\numberwithin{table}{section}
\begin{document}

\title[The baker's map with a convex hole]{The baker's map with a convex hole}
\author{Lyndsey Clark, Kevin G. Hare and Nikita Sidorov}
\address{}
\email{lyndsey.r.clark@gmail.com}
\address{Department of Pure Mathematics \\
University of Waterloo \\
Waterloo, Ontario \\
Canada N2L 3G1}
\thanks{Research of K. G. Hare was supported by NSERC Grant RGPIN-2014-03154.
Computational support was provided by the Canada Foundation for Innovation.}
\email{kghare@uwaterloo.ca}
\address{
School of Mathematics, The University of Manchester,
Oxford Road, Manchester M13 9PL, United Kingdom.}
\email{sidorov@manchester.ac.uk}

\date{\today}
\subjclass[2010]{Primary 28D05; Secondary 37B10.}
\keywords{Open dynamical system, baker's map.}
\begin{abstract}
We consider the baker's map $B$ on the unit square $X$ and an open convex set $H\subset X$
which we regard as a hole. The survivor set $\J(H)$ is
defined as the set of all points in $X$ whose $B$-trajectories are disjoint from $H$.
The main purpose of this paper is to study holes $H$ for which $\dimh \J(H)=0$
(dimension traps) as well as those for which any periodic trajectory of $B$
intersects $\overline H$ (cycle traps).

We show that any $H$ which lies in the interior of $X$ is not a dimension trap. This means that, unlike
the doubling map and other one-dimensional examples, we can have $\dimh \J(H)>0$ for $H$ whose Lebesgue
measure is arbitrarily close to one. Also, we
describe holes which are dimension or cycle traps, critical in the sense that if we
consider a strictly convex subset, then the corresponding property in question no longer holds.

We also determine $\delta>0$ such that $\dimh \J(H)>0$ for all convex $H$ whose Lebesgue
measure is less than $\delta$.

This paper may be seen as a first extension of our work begun in \cite{Lyn,Lyn-thesis,GS15,HS14,Sid14}
to higher dimensions.
\end{abstract}

\maketitle

\section{The doubling map with a hole: recap}
\label{sec:doubling}

Open dynamical systems, i.e. the induced maps on the set of points which do not fall into a certain
predetermined set (a {\em hole}) under iteration by a map, have been studied in numerous papers.
The first discussion was in \cite{PY1979} where they considered an idealized game of billiards without friction, where there is a hole somewhere in the table.
Depending upon the location or size of the hole, the rate of which random billiard balls will ``escape'' was considered.
In \cite{BY2011} it was shown that not only does the size of the hole matter, but that the location of the hole can have huge impact upon the escape rate.
The escape rate in this case can be well approximated by the set of periodic
    points of the map that lie within the hole, \cite{BJP2014}.
For a good history of these problems, see \cite{Demers2006, Demers2014}.
These ideas have also be considered in theoretical computer science,
    in the context of combinatorics on words, in the study of
    {\em unavoidable} words, as first discussed in \cite{Schutz64}.

The set-up is as follows: let $T:X\to X$ be a map and $H$ be an open set.
We define the {\em survivor set for $H$} as follows:
\[
\J(H) := \{x : T^n(x) \not\in H\ \text{for all}\ n \in \BbZ\}
\]
if $T$ is invertible and
\[
\J(H) := \{x : T^n(x) \not\in H\ \text{for all}\ n\ge0\}
\]
otherwise.
If $T$ is invertible, or we wish to emphasis that we are only considering the
    forward orbits, we will use the notation $\J^+(H)$.
In \cite{GS15} P.~Glendinning and the third author investigated
the case $X=[0,1]$ and the doubling map $Tx=2x\bmod1$, with $H=(a,b)$ with
$0<a<b<1$. Here is a brief summary of the main results of that paper.

We need some definitions and basic results from combinatorics on words, which we borrow
from \cite{GS15}.  See also see \cite[Chapter~2]{Loth} for a detailed exposition.
For any two finite 0-1 words $u=u_1\dots u_k$ and $v=v_1\dots v_n$ we write
$uv$ for their concatenation $u_1\dots u_k v_1\dots v_n$.
We write $u^n = \underbrace{u u \dots u}_{n}$ as the concatenation of $u$ with
    itself $n$ times, and $u^\infty$ as the infinite concatenation of $u$
    with itself.
Let $w$ be a finite or infinite word. We say that a finite  word $u$ is a
{\em factor of} $w$ if there exists $k$ such that $u=w_k\dots w_{k+n}$ for some $n\ge0$.
For a finite word $w$ let $|w|$ stand for its length and $|w|_1$ stand for the number of
1s in $w$. The 1-{\em ratio} of $w$ is defined as $|w|_1/|w|$.
For an infinite word $w_1w_2\dots$ the 1-ratio is defined as $\lim_{n\to\infty}|w_1\dots w_n|_1/n$, if it exists.

We say that a finite or infinite word $w$ is {\em balanced} if for any $n\ge1$ and any two factors
$u,v$ of $w$ of length~$n$ we have $||u|_1-|v|_1|\le1$. An infinite word is called {\em Sturmian}
if it is balanced and not eventually periodic. A finite word $w$ is {\em cyclically balanced} if $w^2$ is balanced.
(And therefore, $w^\infty$ is balanced.) It is well known that if $u$ and $v$ are two cyclically balanced words
with $|u|=|v|=q$ and $|u|_1=|v|_1=p$ and $\gcd(p,q)=1$, then $u$ is a cyclic permutation of $v$.
Thus, there are only $q$ distinct cyclically balanced words of length $q$ with $p$ 1s.

We say that a finite or infinite word $u$ is {\em lexicographically smaller than} a word $v$
(notation: $u\prec v$) if either $u_1<v_1$ or there exists an $n\ge1$ such that $u_i = v_i$ for
$i=1,\dots, n$ and $u_{n+1}<v_{n+1}$.

For any $r=p/q\in\mathbb Q\cap(0,1)$ we define the substitution $\rho_r$ on $\{0,1\}$ as follows:
$\rho_r(0)=\zmax(r)$, the lexicographically largest cyclically balanced word of length~$q$ with
1-ratio $r$ beginning with 0, and $\rho_r(1)=\omin(r)$, the lexicographically smallest cyclically
balanced word of length~$q$ with 1-ratio $r$ beginning with 1. In particular, $\rho_{1/2}(0)=01,\ \rho_{1/2}(1)=10$.

Consider some $(a,b)\subset (0,1)$ such that $\dimh \J^+(a,b)=0$. We then begin to move $a$ and $b$ towards
each other until we get a survivor set of positive dimension. In doing so, we pass through a number of
milestones, where we gain extra periodic orbits for $T$ which lie outside the interval.
Namely, let $\bm r=(r_1,r_2, \dots)$ be a finite or
infinite vector with each component $r_i\in\mathbb Q\cap(0,1)$.
We define the sequences of 0-1 words parameterized by $\bm r$ as follows:
\begin{align*}
s_n &= \rho_{r_1}\dots \rho_{r_n}(0),\\
t_n &= \rho_{r_1}\dots \rho_{r_n}(1).
\end{align*}
In particular, if $r_i=1/2$ for all $i\le n$, we have that $s_n$ is a truncated Thue-Morse word and
$t_n$ is its mirror image (see Section~\ref{sec:55} for a precise definition of the Thue-Morse word). Put
\begin{align*}
\mathfrak s(\bm r)&=\lim_{n\to\infty}\rho_{r_1}\dots\rho_{r_n}(0),\\
\mathfrak t(\bm r)&=\lim_{n\to\infty}\rho_{r_1}\dots\rho_{r_n}(1).
\end{align*}

\begin{thm} \cite[Theorem~2.13]{GS15}
We have $\dimh \J^+ (\mathfrak s(\bm r), \mathfrak t(\bm r))=0$ and moreover,
$\dimh \J^+ (a,b)>0$ for any $a>\mathfrak s(\bm r), b<\mathfrak t(\bm r)$.
\end{thm}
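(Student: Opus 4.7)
The strategy is to translate the problem to symbolic dynamics. Identify $[0,1]$ with $\{0,1\}^{\mathbb N}$ via binary expansion (up to countably many dyadic rationals); the doubling map becomes the shift $\sigma$, and $\J^+(a,b)$ corresponds to the subshift
\[
\Omega_{a,b}=\{w\in\{0,1\}^{\mathbb N}:\sigma^n w\preceq\mathfrak a\text{ or }\sigma^n w\succeq\mathfrak b\text{ for all }n\ge 0\},
\]
where $\mathfrak a,\mathfrak b$ are the binary expansions of $a,b$. Since the doubling map has constant expansion factor~$2$, one has $\dimh\J^+(a,b)=h_{\mathrm{top}}(\Omega_{a,b})/\log 2$, so both claims reduce to computing topological entropy.

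For the first part, the plan is a renormalisation argument. Using that $\rho_{r_1}(0)=\zmax(r_1)$ and $\rho_{r_1}(1)=\omin(r_1)$ are cyclically balanced and lexicographically extremal, one shows that every $w\in\Omega_{\mathfrak s(\bm r),\mathfrak t(\bm r)}$ admits a \emph{unique} parsing into blocks from $\{\rho_{r_1}(0),\rho_{r_1}(1)\}$. Recoding each block as $0$ or $1$ respectively yields an injection $\Omega_{\mathfrak s(\bm r),\mathfrak t(\bm r)}\hookrightarrow\Omega_{\mathfrak s(\bm r'),\mathfrak t(\bm r')}$, where $\bm r'=(r_2,r_3,\ldots)$, which compresses lengths by the factor $q_1:=|\rho_{r_1}(0)|\ge 2$. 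Iterating this $n$ times gives a length contraction by $q_1\cdots q_n\to\infty$, so the number of admissible length-$N$ words in $\Omega_{\mathfrak s(\bm r),\mathfrak t(\bm r)}$ grows subexponentially in $N$. Hence $h_{\mathrm{top}}=0$ and the Hausdorff dimension vanishes.

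For the second part, fix $a\succ\mathfrak s(\bm r)$ and $b\prec\mathfrak t(\bm r)$. Because $\mathfrak s(\bm r)=\lim s_n$ and $\mathfrak t(\bm r)=\lim t_n$ with $s_n,t_n$ being prefixes, the strict inequalities already hold at some finite level $n$ with genuine slack. One then constructs two distinct cyclically balanced blocks $u,v$ (built from suitable refinements of $s_n,t_n$) whose periodic extensions $u^\infty,v^\infty$ and all their cyclic shifts lie in $[0,a]\cup[b,1]$, and for which certain controlled concatenations remain admissible. This produces a subshift of finite type of positive topological entropy contained in $\Omega_{a,b}$, giving $\dimh\J^+(a,b)>0$.

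The main obstacle is the unique-parsing step in Part~1: one must verify that the lexicographic constraints imposed by $\mathfrak s(\bm r),\mathfrak t(\bm r)$ force every admissible sequence to decompose in exactly one way as a concatenation of $\zmax(r_1)$- and $\omin(r_1)$-blocks, and that the recoded sequence itself satisfies the analogous constraints for $\bm r'$. This is a delicate combinatorial statement about how lexicographic order interacts with inverse substitution, and it relies crucially on the extremality and cyclic balance of $\zmax(r_1)$ and $\omin(r_1)$. The corresponding step in Part~2 (verifying that the chosen concatenations of $u,v$ stay in $[0,a]\cup[b,1]$ under all shifts) is technically similar but easier, since the strict inequalities afford a positive margin of error.
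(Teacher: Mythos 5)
This theorem is not proved in the paper at hand; it is quoted from \cite{GS15} (Theorem~2.13 there), so there is no internal proof to compare your attempt against. Judged on its own merits, your outline picks the right high-level framework: pass to the lexicographically defined subshift $\Omega_{a,b}$, use the constant-slope dictionary $\dimh\J^+(a,b)=h_{\mathrm{top}}(\Omega_{a,b})/\log 2$, and renormalise through the substitutions $\rho_{r_i}$. That is indeed the spirit of Glendinning--Sidorov's argument.

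But what you have written is a roadmap rather than a proof: the crucial step is deferred and even flagged by you as the ``main obstacle.'' You assert, without proof, that every $w\in\Omega_{\mathfrak s(\bm r),\mathfrak t(\bm r)}$ admits a \emph{unique} parsing into $\zmax(r_1)$- and $\omin(r_1)$-blocks and that the recoded word lies in $\Omega_{\mathfrak s(\bm r'),\mathfrak t(\bm r')}$ for $\bm r'=(r_2,r_3,\dots)$. This unique-factorisation-plus-intertwining statement is essentially the whole theorem in disguise; it must be extracted from the cyclic balance and lexicographic extremality of $\zmax(r_1),\omin(r_1)$ (to force block boundaries at fixed residues under the constraints $\sigma^n w\preceq\mathfrak s(\bm r)$ or $\sigma^n w\succeq\mathfrak t(\bm r)$), and one must also handle the one-sided boundary effect, since an admissible sequence can start in the interior of a block. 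Without those verifications, the conclusion that $q_1\cdots q_n\to\infty$ forces subexponential word growth simply does not follow. Similarly, for the second claim, ``suitable refinements of $s_n,t_n$'' is a placeholder: you need to exhibit concrete cyclically balanced $u,v$ and check that every shift of every concatenation respects the constraints, which is where the real work lies. In short, the architecture is correct and consistent with the cited source, but the combinatorial core that the theorem actually consists of is outlined, not supplied.
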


In other words, any infinite sequence of rationals from $(0,1)$ induces a route to chaos;
moreover, it was shown in \cite[Lemma~2.12]{GS15} that
$\J^+(s_n^\infty, t_n^\infty)\setminus \J^+(s_{n-1}^\infty, t_{n-1}^\infty)$
contains just one cycle, namely, $s_n^\infty$ (which is the same as $t_n^\infty$).

The reason behind this approach is that we need $\J^+(H)$ to be uncountable (and possibly,
of positive Hausdorff dimension) for the induced map $T|_{\J^+(H)}$ to be of interest.

As a corollary of several results from \cite{GS15}, we have the following result,
which, roughly speaking, implies that if a connected hole $H$ for the doubling map
is too small, then its survivor set has positive Hausdorff dimension, whereas
if it is too large, then it intersects all orbits.

\begin{thm}[\cite{GS15}]\label{thm:doubling}
Let $(a,b) \subset [0,1]$.
\begin{enumerate}
\item
We always have $\dimh \J^+(a,b)>0$ if
$$
b-a<\frac14\cdot\prod_{n=1}^{\infty}\left(1-2^{-2^n}\right) \approx0.175092,
$$
and this bound is sharp.
\item If $b-a>\frac12$, then $\J^+(a,b)=\{0,1\}$.
\end{enumerate}
\end{thm}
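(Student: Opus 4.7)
The two parts are largely independent: part~(2) yields to an elementary bootstrap, while part~(1) is a corollary of Theorem~2.13 together with the identification of the Thue--Morse extremal as the shortest hole in the family $\{(\mathfrak s(\bm r),\mathfrak t(\bm r))\}_{\bm r}$.

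For part~(2), I would show directly that no $x\in(0,1)$ survives. Fix such an $x$ and track which of the complementary intervals $[0,a]$, $[b,1]$ each iterate $T^n(x)$ lies in. The hypothesis $b-a>1/2$ is tailored so that neither cross-transition is possible: one checks $T([0,a])=[0,2a]\subseteq[0,b)$ (using $b>a+1/2$ and $b\le 1$, forcing $a<1/2$ and hence $2a<a+1/2<b$) and, symmetrically, $T([b,1])=[2b-1,1]\subseteq(a,1]$. The orbit is therefore trapped in whichever half-interval contains $x$. On $[0,a]$, iterating $T^n(x)\in[0,a]$ gives $x\le a/2^n$, whence $x=0$; symmetrically $x=1$ on $[b,1]$. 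Combined with the trivial inclusion $\{0,1\}\subseteq\J^+(a,b)$, this yields $\J^+(a,b)=\{0,1\}$.

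For part~(1), my plan is to specialise Theorem~2.13 to $\bm r=(1/2,1/2,\dots)$. The substitutions $\rho_{1/2}(0)=01$ and $\rho_{1/2}(1)=10$ are digit-wise complements, so inductively $t_n=\bar s_n$; taking limits gives $\mathfrak s(\bm r)+\mathfrak t(\bm r)=1$, hence $\mathfrak t(\bm r)-\mathfrak s(\bm r)=1-2\mathfrak s(\bm r)$. Using the fixed-point identity $\rho_{1/2}(\mathfrak s(\bm r))=\mathfrak s(\bm r)$ and unpacking through the doubling action on binary expansions, this difference evaluates to the Mahler-type product $\tfrac14\prod_{n=1}^{\infty}(1-2^{-2^n})$, the constant $\delta_0$ in the statement. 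Theorem~2.13 then delivers $\dimh\J^+(a,b)>0$ for every $(a,b)\subsetneq(\mathfrak s(\bm r),\mathfrak t(\bm r))$, and the Thue--Morse hole itself witnesses sharpness.

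The main obstacle is promoting part~(1) to \emph{all} holes of length $<\delta_0$---in particular, those not contained in the Thue--Morse hole, e.g.\ holes placed near $0$ or near $1$. To close this gap I would prove the classification: every dimension-zero hole $(a,b)$ for the doubling map contains some extremal hole $(\mathfrak s(\bm r),\mathfrak t(\bm r))$, the sequence $\bm r$ being reconstructed from the cycle-addition structure captured by Lemma~2.12. Given this classification, part~(1) reduces to the minimisation $\inf_{\bm r}\bigl(\mathfrak t(\bm r)-\mathfrak s(\bm r)\bigr)=\delta_0$, attained uniquely at Thue--Morse; this in turn reduces to a monotonicity estimate for the substitutions $\rho_r$ saying that the shortest cyclically balanced expansion at each scale is the one that packs $0$s and $1$s as evenly as possible, i.e.\ $r=1/2$. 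Carrying out this combinatorial optimisation is the technical core of the proof.
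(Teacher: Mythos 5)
Your part~(2) is correct and elementary, and I see no gap there. From $b-a>\tfrac{1}{2}$ together with $0\le a$ and $b\le 1$ you get $a<\tfrac{1}{2}<b$, hence $2a<a+\tfrac{1}{2}<b$ and $2b-1>a$; therefore $T([0,a])=[0,2a]$ misses $[b,1]$ and $T([b,1])=[2b-1,1]$ misses $[0,a]$, so a surviving orbit is confined to one of the two complementary pieces. Iterating $2^n x\le a$ for all $n$ forces $x=0$, and symmetrically $x=1$ on $[b,1]$, so $\J^+(a,b)=\{0,1\}$.

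For part~(1), the specialisation and the arithmetic are right: $\rho_{1/2}(0)=01$ and $\rho_{1/2}(1)=10$ are digit complements, so $\mathfrak t(\bm r)=1-\mathfrak s(\bm r)$ when $\bm r=(1/2,1/2,\dots)$, and one checks via the Mahler-type product for the Thue--Morse generating function that $1-2\mathfrak s(\bm r)=\tfrac{1}{4}\prod_{n\ge1}\bigl(1-2^{-2^n}\bigr)$. Sharpness then follows at once from $\dimh\J^+\bigl(\mathfrak s(\bm r),\mathfrak t(\bm r)\bigr)=0$.

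The genuine gap is exactly the one you flag, and it is not closed. Theorem~2.13 controls only holes nested strictly inside a single $(\mathfrak s(\bm r),\mathfrak t(\bm r))$; it says nothing about a short interval positioned so that $a>\mathfrak s(\bm r)$ but also $b>\mathfrak t(\bm r)$, or about holes lying to one side of $\tfrac{1}{2}$. Your proposed bridge --- that every zero-dimensional hole contains some $(\mathfrak s(\bm r),\mathfrak t(\bm r))$ --- is asserted, not argued, and is not obviously the right formulation: the critical boundary $a\mapsto b^*(a):=\inf\{b:\dimh\J^+(a,b)=0\}$ is built from the finite-level plateau intervals $(s_n^\infty,t_n^\infty)$ along each route as well as their limit points, and one must separately treat $a$ not lying on the critical Cantor set of $\mathfrak s$-values. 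Establishing that $\inf_a\bigl(b^*(a)-a\bigr)$ equals the Thue--Morse constant, with the infimum attained there, is precisely the hard content of \cite{GS15} that your sketch defers to a future ``combinatorial optimisation.'' As stated, then, part~(1) is an outline rather than a proof. Note also, for calibration, that the present paper does not re-prove this theorem but imports it from \cite{GS15}; the above is a comparison against the source, not against anything internal to this text.
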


The purpose of the present paper is to extend this approach to the baker's map. As we will
see, it works perfectly in the symmetric case $r_i\equiv1/2$ (Section~\ref{sec:55}) and
fails for an asymmetric case (Section~\ref{ssec:dim trap55b}). Having said that, the baker's
map generates a great deal of completely new effects in comparison with the doubling map
(see, e.g., Section~\ref{sec:01}).

\section{Baker's map: preliminaries}

\label{sec:intro}
Put $X=[0,1]^2$. The baker's map $B:X\to X$ is the natural extension of the doubling map, conjugate to the
shift map on the set of bi-infinite sequences. Namely,
 \begin{equation*}
  B(x,y) = \begin{cases}
            (2x, \frac{y}{2}) &\text{ if } 0 \leq x < \frac{1}{2}, \\
            (2x-1, \frac{y+1}{2}) &\text{ if } \frac{1}{2} \leq x < 1.
           \end{cases}
 \end{equation*}
Put
\begin{align*}
R_0 &= \{ (x,y) : 0 \leq x < 1/2 \}, \\
R_1 &= \{ (x,y) : 1/2 < x \le 1 \}.
\end{align*}
We associate each point $(x,y) \in [0,1) \times [0,1)$ with a bi-infinite sequence
$(x_n)_{-\infty}^\infty \in \{0, 1\}^{\BbZ}$ as follows:
\begin{equation*}
 x_n = k \iff B^{-n}(x,y) \in R_k.
\end{equation*}
(We cannot say what happens when $x=\frac12$.)
The map $\pi$ conjugating the left shift $\sigma:\{0,1\}^{\mathbb Z}\to \{0,1\}^{\mathbb Z}$ and $B$ is given by the formula
\begin{equation}\label{eq:pi}
\pi((x_n)_{-\infty}^\infty)=\left(\sum_{n=1}^\infty x_n2^{-n}, \sum_{n=0}^\infty x_{-n}2^{-n-1}\right).
\end{equation}
In other words, if we write $x$ and $y$ in base 2
    as $x = 0.x_1 x_2 x_3 \ldots$ and $y = 0.y_1 y_2 y_3 \ldots$, then
\begin{align*}
B(x,y) &= (0.x_2 x_3 x_4 \ldots, 0.x_1 y_1 y_2 y_3 \ldots),\\
     B^{-1}(x,y) &= (0.y_1 x_1 x_2 x_3 x_4 \ldots, 0. y_2 y_3 y_4 \ldots).
\end{align*}
We will denote the associated bi-infinite sequence as $\ldots y_3 y_2 y_1 \cdot x_1 x_2 x_3 \ldots$
We thus have
\begin{align*}
\sigma(\ldots y_3 y_2 y_1 \cdot x_1 x_2 x_3 \ldots) & =
       \ldots y_3 y_2 y_1 x_1 \cdot x_2 x_3 \ldots  \\
\sigma^{-1}(\ldots y_3 y_2 y_1 \cdot x_1 x_2 x_3 \ldots) & =
       \ldots y_3 y_2 \cdot y_1 x_1 x_2 x_3 \ldots
\end{align*}
We will use these two notations interchangeably.
Let $H\subset X$; following Section~\ref{sec:doubling}, we define the survivor set for $H$ as follows:
\[
\J(H) := \{(x,y) : B^n(x,y) \not\in H\ \text{for all}\ n \in \BbZ\}.
\]
Let now $[u_{-k}\ldots u_0\cdot u_1\ldots u_m]$ denote the set of all
$(x_n)_{-\infty}^\infty\in\{0,1\}^{\mathbb Z}$ such that $x_i=u_i$ for $-k\le i\le m$
(a {\em cylinder}). See Figure~\ref{fig:cylinders}
for the case $m=k=1$. 

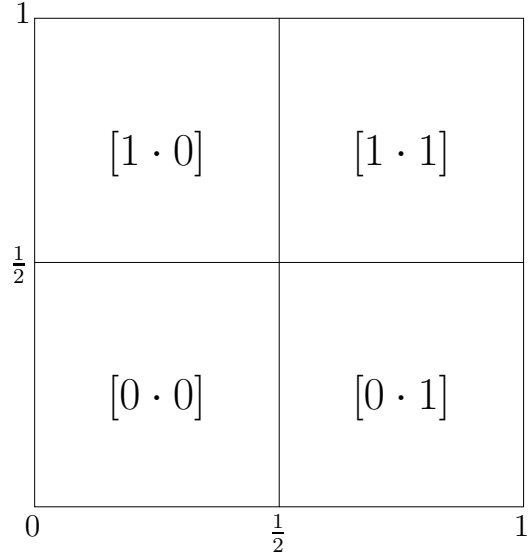
\begin{figure}[t]
\centering
\centering \unitlength=1.3mm
    \begin{picture}(50,53)(0,0)
            \thinlines
  \path(0,0)(0,50)(50,50)(50,0)(0,0)
  \path(0,25)(50,25)
  \path(25,0)(25,50)
  \put(-1,-3){$0$}
    \put(49,-3){$1$}
    \put(-2,49.5){$1$}
  \put(-2.5,24){$\frac12$}
    \put(24,-3.5){$\frac12$}
    {\Large \put(7.5,35){$[1\cdot0]$}
            \put(7.5,10){$[0\cdot0]$}
            \put(32.5,35){$[1\cdot1]$}
            \put(32.5,10){$[0\cdot1]$}
    }
          \end{picture}

\bigskip
\caption{Cylinders for the baker's map} \label{fig:cylinders}
\end{figure}

\begin{defn}
\begin{itemize}
\item We will say that an open set $H$ is a {\em complete trap} if $\J(H)$ does not contain any points except,
possibly, of the form $\pi(\dots 11110000\dots)$ or $\pi(\dots 00001111\dots)$, both
orbits lying on the boundary of $X$.
\item We will say that an open set $H$ is a {\em cycle trap} if $\J(\overline H)$ does not contain any cycles of $B$.
\item We will say that an open set $H$ is a {\em dimension trap} if $\dimh \J(H) = 0$.
\end{itemize}
\end{defn}

It is clear that any complete trap is both a cycle trap and a dimension trap.
In this paper we wish to study those $H$ such that $H$ is a cycle trap (resp. dimension trap),
and $H$ is \textbf{convex}.
The only exception will be considered in Appendix.
As we see below, if we do not restrict our holes to convex sets, then it is possible to
    have arbitrarily small complete traps.

\begin{thm}
\label{thm:epsilon}
For all $\epsilon > 0$ exists a connected non-convex complete trap $H$
    such that its Lebesgue measure $\mathcal L$ is less than $\epsilon$.
\end{thm}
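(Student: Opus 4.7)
Given $\varepsilon>0$, my plan is to exhibit $H=H_\varepsilon$ explicitly as a thin, tree-like open neighborhood in $X$. I would start with two small open disks $D_0,D_1$ centered at the fixed points $(0,0)$ and $(1,1)$. Next, enumerate the countable family of $B$-periodic orbits as $\{O_n\}_{n\ge1}$, pick a representative $q_n\in O_n$, and attach to $D_0\cup D_1$ a thin open tube of width $\delta_n$ terminating in a small open ball $B(q_n,\delta_n)$. With $\delta_n$ decreasing sufficiently fast, the total area is less than $\varepsilon$; the resulting $H$ is connected by construction and non-convex because of the branching structure. The allowed exceptional orbits of $(0,1)$ and $(1,0)$ accumulate at $(0,0)$ and $(1,1)$, which lie in $H$, so by openness they too eventually enter $H$; hence to establish the complete trap property it will suffice to prove $\J(H)=\varnothing$.

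I would then verify $\J(H)=\varnothing$ as follows. The two fixed points and every $B$-periodic orbit are caught directly since $q_n\in O_n\cap H$. For a non-periodic point $x$, the $\omega$-limit set $\omega(x)$ is a nonempty closed $B$-invariant set and therefore contains a minimal subsystem $M$. If $M$ is a periodic orbit, then $M\subset H$ and, since the orbit of $x$ accumulates on $M$ and $H$ is open, the orbit of $x$ enters $H$. If $M$ is aperiodic, the density of periodic points in the full $2$-shift supplies periodic orbits arbitrarily close to $M$; provided the $\delta_n$ decay slower than the approximation rate of aperiodic minimal sets by periodic orbits, some ball $B(q_n,\delta_n)\subset H$ meets $M$, and once again minimality forces the orbit of $x$ into $H$.

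The hard part will be this last step: balancing the summability $\sum_n \mathcal L(T_n)<\varepsilon$ (needed to control the measure) against the requirement that some $\delta_n$-ball actually reaches every aperiodic minimal set. The construction remains feasible because, for every aperiodic minimal $M\subset X$, the Euclidean distance from $M$ to the nearest $B$-periodic orbit of period $\le n$ tends to $0$; the non-convexity is essential here, as a thin branching tree can approach every minimal subsystem without incurring more than $\varepsilon$ of total area. A careful, non-uniform allocation of the $\delta_n$ (enlarging the widths along specific branches to match the diophantine-type approximation rate of each target aperiodic system, while shrinking them elsewhere to keep the geometric series summable) then delivers both $\mathcal L(H)<\varepsilon$ and $\J(H)=\varnothing$, finishing the proof.
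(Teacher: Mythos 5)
Your construction has a genuine gap at precisely the step you flag as ``the hard part,'' and I believe the gap is not a mere technicality but a fatal obstruction to the approach as described. You reduce the problem to showing that every aperiodic minimal subshift $M$ meets some ball $B(q_n,\delta_n)$, where $q_n$ is a periodic point and the radii $\delta_n$ must be summable enough (together with the tube widths) to keep $\mathcal L(H)<\varepsilon$. But being within $\delta_n$ of a period-$p$ point $q_n$ (whose coding is $\dots ww\cdot ww\dots$) forces the symbolic sequence to contain roughly $w^{2k}$ as a factor, with $kp\approx\log_2(1/\delta_n)$. Small $\delta_n$ therefore means high powers of periodic words, and many minimal subshifts are \emph{power-free} or have uniformly bounded powers: the Thue--Morse subshift is cube-free, and Sturmian subshifts with badly approximable slopes have bounded index. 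Such a subshift will avoid every ball whose radius is below a fixed threshold, so only finitely many of your balls can ever hope to hit it, and those finitely many must then be large --- which destroys the measure bound. The phrase ``careful, non-uniform allocation of the $\delta_n$'' is a placeholder, not an argument, and nothing in the proposal explains how uncountably many power-avoiding minimal subshifts can all be met while the total area stays below $\varepsilon$. Density of periodic points gives approximation in the Cantor metric but gives no uniform rate across minimal sets, which is exactly what would be needed.

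The paper proceeds along an entirely different route that avoids any contact with the classification of minimal sets. It starts from the quarter-square $A_1=(1/2,1)\times(0,1/2)$, which is already a (large but evident) complete trap, tiles it by $2^{-n}\times2^{-n}$ cylinders, pushes half of them forward by $B^n$ (producing full-width horizontal strips) and the other half backward by $B^{-n}$ (producing full-height vertical strips). The union $A_2$ is connected because every horizontal strip meets every vertical strip, it remains a complete trap because any orbit that would have entered $A_1$ enters $A_2$ at a shifted time, and the overlaps make $\mathcal L(A_2)=\mathcal L(A_1)-\mathcal L(A_1)^2/4<\mathcal L(A_1)$. Iterating drives the measure to zero. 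The crucial structural point --- that images under $B^{\pm n}$ of a tiling by dyadic cylinders propagate the trap property while forcing overlaps --- has no analogue in your ball-and-tube construction, and it is what makes the measure bound compatible with catching every orbit.
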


\begin{proof}
Let $A_1$ be the set $(1/2, 1) \times (0, 1/2)$. Clearly, the only sequences
that avoid $A_1$ are of the form $\dots 11110000\dots$, which implies that $A_1$
is a complete trap.

Now we choose $n$ such that $A_1$ can be completely tiled by $1/2^n \times 1/2^n$
    cylinders
    $C_i = [b^{i}_n b^{i}_{n-1} \dots b^{i}_1 \cdot a^{i}_1 a^{i}_2 \dots a^{i}_n]$.
We see that there are $2^{2n} \mathcal L(A_1)$ cylinders $C_i$.
By choosing $n$ large enough, we make sure that the number of such cylinders
    is even.

Now we divide this set of cylinders into two sets of equal size, say $D_1$ and $D_2$.
Let $A_2 := B^n(D_1) \cup B^{-n}(D_2)$.
We claim that $A_2$ is a connected complete trap, and that
    $\mathcal L(A_2) = \mathcal L(A_1) - \mathcal L(A_1)^2/4$.

To see that this is a complete trap, we observe if some element in the orbit
    of $(x,y)$, say $(x', y')$ is in $A_1$, then we have that either
    $B^{-n}(x', y') \in D_1$ or $B^{n} (x', y') \in D_2$.
Therefore, the orbit of $(x,y)$ falls into $A_2$.

We observe that all images in $B^n(D_1)$ are rectangles with
     width $1$ and height $1/2^{2n}$.
That is, they are horizontal strips going from $x = 0$ to $x = 1$.
We further observe that all images in $B^{-n}(D_2)$ are rectangles with
    height $1$ and width $1/2^{2n}$.
That is, they are vertical strips going from $y = 0$ to $y = 1$.
Clearly each horizontal strip overlaps each vertical strip, hence
    this is a connected set.

Moreover, each horizontal strip overlaps each vertical strip in a block of
    size $1/2^{2n} \times 1/2^{2n}$.
There are $\#(D_1) \cdot \#(D_2) =
    \left(2^{2n-1} \mathcal L(A_1)\right) \cdot \left(2^{2n-1} \mathcal L(A_1) \right)
    = 2^{4n-2} \mathcal L(A_1)^2$ such overlaps.
Thus, the size of $A_2$ is the size of the vertical strips
    (which is $\mathcal L(A_1) /2$),
    plus the size of the horizontal strips (which is $\mathcal L(A_1) / 2$),
    minus the size of the overlap (which is $2^{4n-2} \mathcal L(A_1)^2 /2^{4n} =
    \mathcal L(A_1)^2 / 4$).
Thus, $\mathcal L(A_2) = \mathcal L(A_1) - \mathcal L(A_1)^2/4$.

We can repeat this process to get $A_k$, with $\mathcal L(A_k) =
    \mathcal L(A_{k-1}) -\mathcal L(A_{k-1})^2/4$.
The fact that $\mathcal L(A_k)\to0$ as $k\to\infty$ completes the proof.
\end{proof}

\begin{rmk}
The existence of arbitrarily small complete traps for the shift has been
known since the seminal paper \cite{Schutz64}. The novelty of our Theorem~\ref{thm:epsilon}
is the fact that our hole is connected. 
\end{rmk}

See Figure~\ref{fig:epsilon} for $A_1$ and a possible choice for $A_2$ and $A_3$
    from Theorem~\ref{thm:epsilon}.

\begin{figure}
\includegraphics[width=120pt,height=120pt,angle=270]{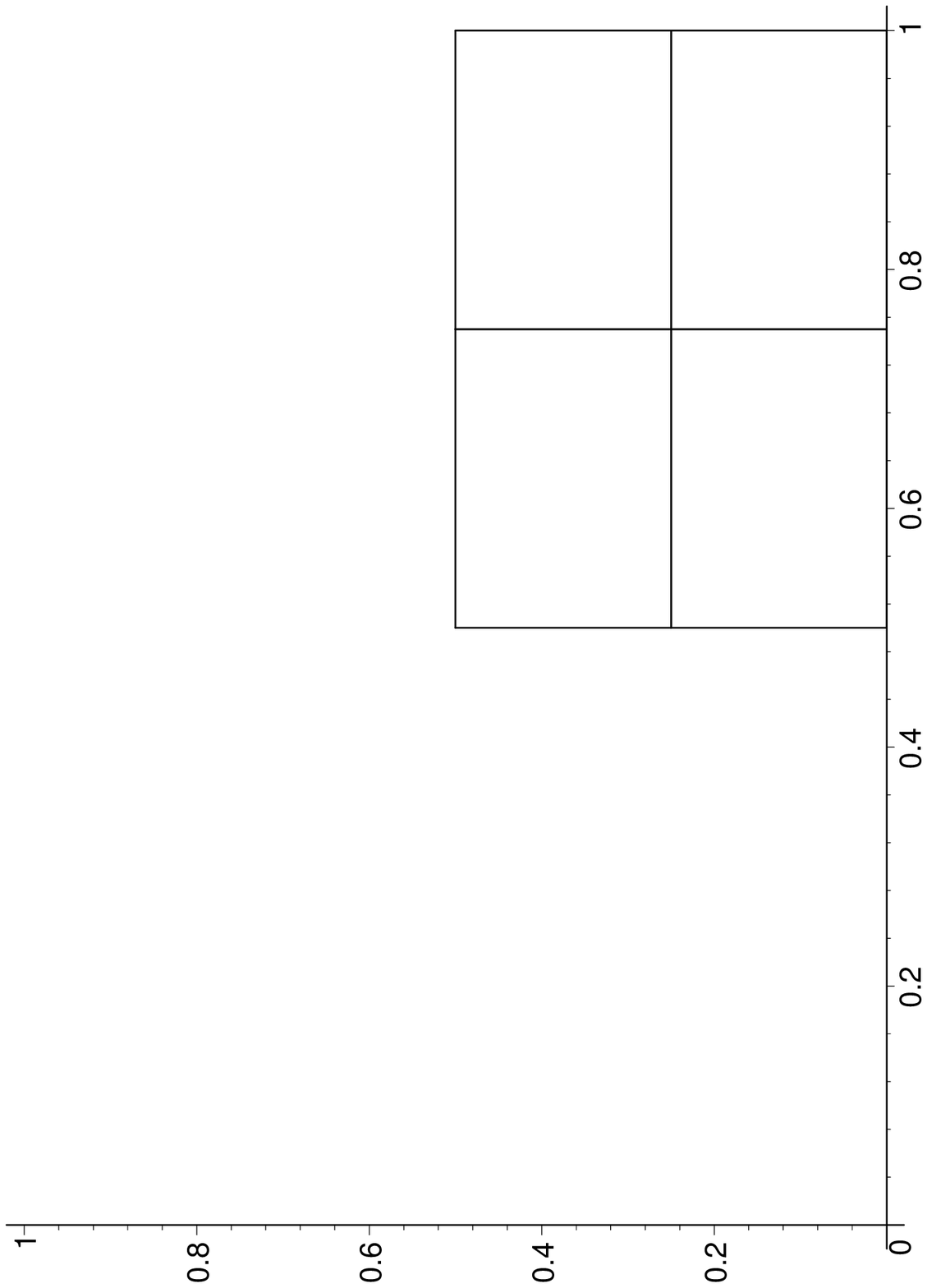}
\includegraphics[width=120pt,height=120pt,angle=270]{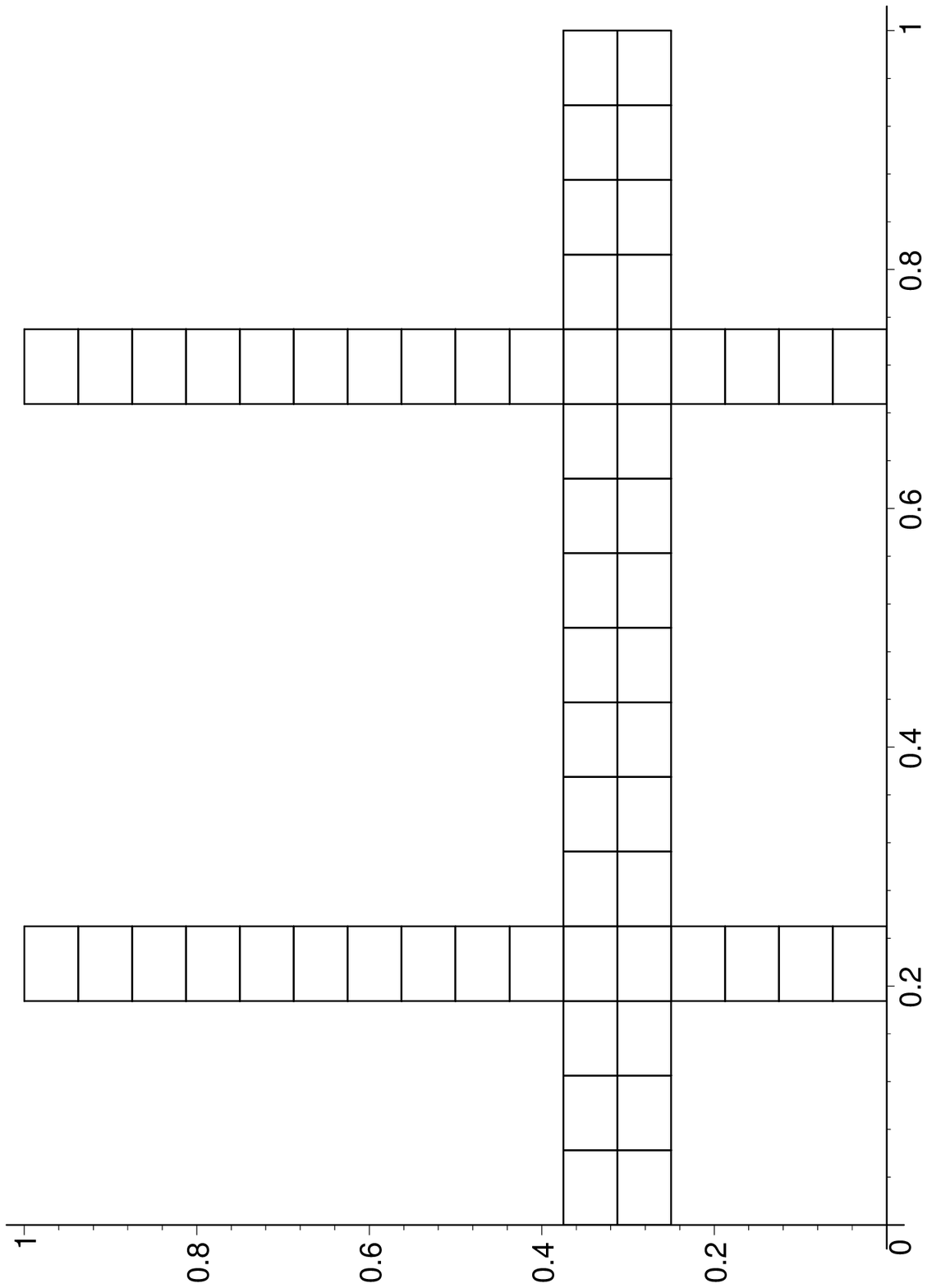}
\includegraphics[width=120pt,height=120pt,angle=270]{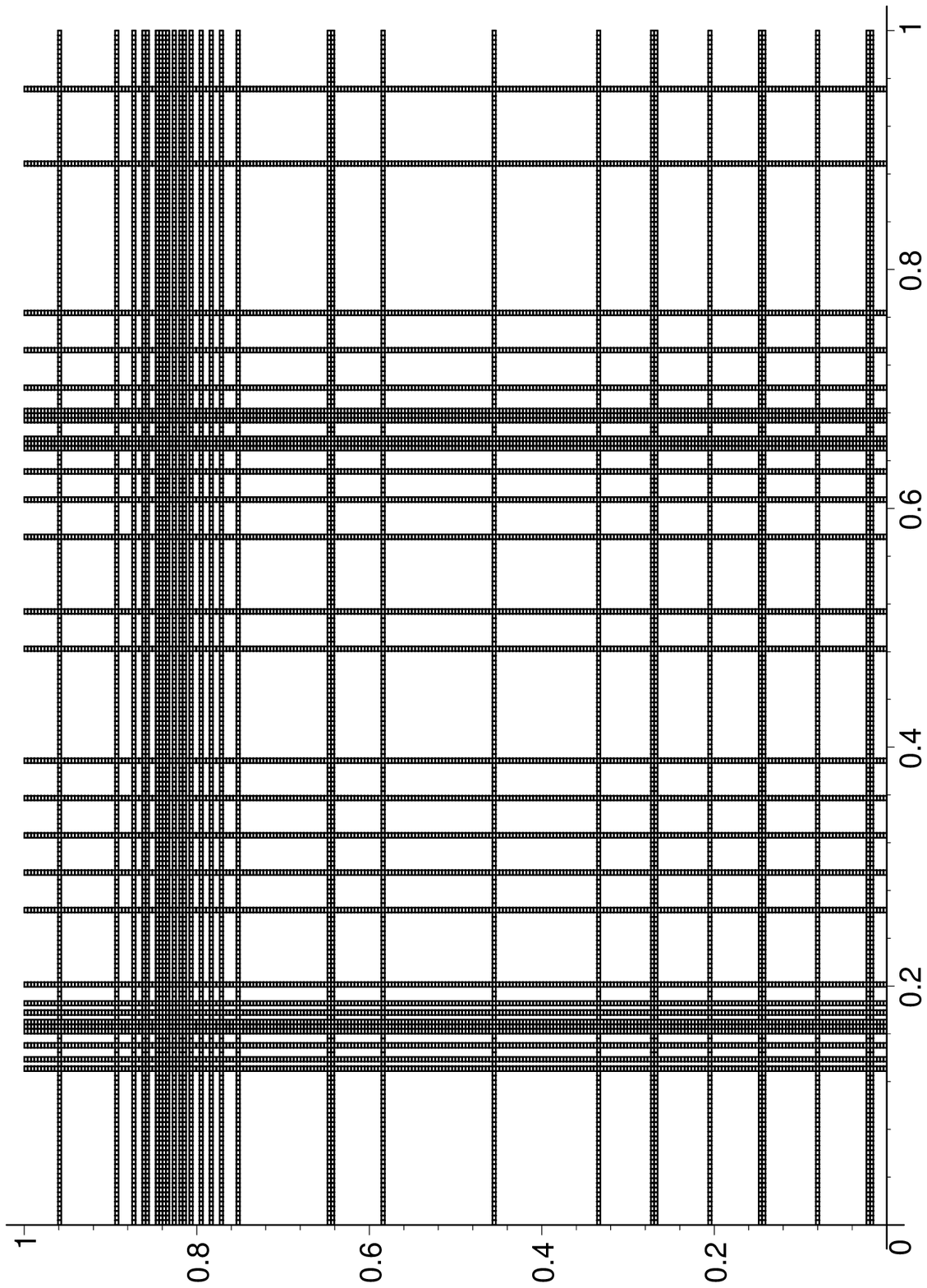}
\caption{Possible $A_1, A_2$ and $A_3$ from Theorem \ref{thm:epsilon}}
\label{fig:epsilon}
\end{figure}

Henceforward we assume $H$ to be convex.
Our first easy observation is that $\left\{\left(\frac13, \frac23), (\frac23, \frac13\right)\right\}$
is the 2-cycle for the baker's map.
As bi-infinite sequences, these correspond to the cycle of $\sigma$ given by
    \[\{ \ldots 0101\cdot 0101 \ldots\ ,\  \ldots 1010\cdot 1010 \ldots \}.\]
Hence if $H$ is a cycle trap, then $\overline H$ must contain at least one of these two points.
The next result shows that if $H$ is a dimension trap, then it must contain
    a part of the boundary of $X$.
This helps to motivate the shapes of $H$ that we investigate.
We recall that $A \subset \{0, 1\}^\BbZ$ is called a {\em subshift} if
    it is closed under the product topology, and invariant under the
    shift operator.
We first prove the claim which
relates the topological entropy of a subshift to the Hausdorff dimension of its image.

\begin{lemma}\label{lem:entropy-dim}
Let $Y$ be a subshift of $\{0,1\}^{\mathbb Z}$ and let $h(Y)$ be its topological entropy ($\log$ base~$2$).
Then
\[
\dimh(\pi(Y))=2h(Y).
\]
In particular, $h(Y)=0\iff \dimh(\pi(Y))=0$.
\end{lemma}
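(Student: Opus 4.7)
The plan is to match upper and lower bounds on $\dimh(\pi(Y))$ using the dyadic-square correspondence: by (\ref{eq:pi}), a cylinder $[u_{-(m-1)}\ldots u_0\cdot u_1\ldots u_m]$ of length $2m$ is mapped by $\pi$ onto the dyadic square in $[0,1]^2$ of side $2^{-m}$ whose coordinates have the corresponding initial base-$2$ digits, because $x_1,\ldots,x_m$ fix the first coordinate and $x_0,x_{-1},\ldots,x_{-(m-1)}$ fix the second, each up to an error of $2^{-m}$.

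For the upper bound I cover $\pi(Y)$ by the dyadic squares of side $2^{-m}$ indexed by $\mathcal{L}_{2m}(Y)$, the set of length-$2m$ factors of sequences in $Y$. There are $|\mathcal{L}_{2m}(Y)|=2^{2m(h(Y)+o(1))}$ such squares by the definition of topological entropy, so $\overline{\dim}_B(\pi(Y))\le 2h(Y)$ and hence $\dimh(\pi(Y))\le 2h(Y)$.

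For the lower bound I may assume $h(Y)>0$. Given $\epsilon>0$, the variational principle furnishes a $\sigma$-invariant ergodic measure $\mu$ on $Y$ with $h(\mu)>h(Y)-\epsilon$; set $\nu:=\pi_*\mu$. The Shannon--McMillan--Breiman theorem gives, for $\mu$-a.e.\ $\omega$,
\[
\lim_{m\to\infty}\frac{-\log_2\mu\bigl([\omega_{-(m-1)}\ldots\omega_0\cdot\omega_1\ldots\omega_m]\bigr)}{2m}=h(\mu).
\]
Since this cylinder pushes forward to the $2^{-m}$-dyadic square containing $\pi(\omega)$, and any Euclidean ball of radius $2^{-m}$ meets at most a bounded number (at most $9$) of such dyadic cells, the standard local-dimension machinery for shift-invariant measures (equivalently, Young's formula applied to the baker's map, whose Lyapunov exponents are $\pm\log 2$) yields
\[
\liminf_{r\to0^+}\frac{\log\nu(B(p,r))}{\log r}\ge 2(h(\mu)-\epsilon)
\]
for $\nu$-a.e.\ $p$. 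Billingsley's mass distribution principle then gives $\dimh(\pi(Y))\ge\dimh(\mathrm{supp}\,\nu)\ge 2(h(\mu)-\epsilon)>2h(Y)-4\epsilon$, and letting $\epsilon\downarrow 0$ completes the proof. The final equivalence $h(Y)=0\iff\dimh(\pi(Y))=0$ is then immediate.

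The main obstacle is the lower bound: a general subshift need not admit a Gibbs measure or any uniform cylinder-mass control, so one cannot use cylinder counts directly. The remedy is to approximate $h(Y)$ from below by entropies of ergodic measures and apply Shannon--McMillan--Breiman pointwise, coupled with a bounded-overlap argument that transfers the cylinder-mass bound to Euclidean balls inside Billingsley's lemma. The upper bound, by contrast, is just a direct covering argument.
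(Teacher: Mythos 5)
Your proof is correct and follows essentially the same route as the paper's (which sketches the MathOverflow answer of A.~Quas): both exploit the fact that a cylinder $[x_{-k+1}\ldots x_0\cdot x_1\ldots x_k]$ maps under $\pi$ to a $2^{-k}\times 2^{-k}$ dyadic square, prove the upper bound by a cylinder-count/covering argument from the definition of topological entropy, and prove the lower bound by pushing forward a high-entropy shift-invariant measure, invoking Shannon--McMillan--Breiman and the mass-distribution (Billingsley) principle. Your version is a bit more careful than the paper's sketch in explicitly selecting an \emph{ergodic} measure via the variational principle so that SMB gives a constant limit, but the underlying idea is identical.
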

\begin{proof}\footnote{This proof was given by Anthony Quas on MathOverFlow \cite{Quas}, for
which the authors are grateful to him.} Let
$\mu$ stand for a measure of maximal entropy for $Y$. Then by the Shannon-McMillan-Breiman
theorem, for any $\varepsilon>0$, we have a set $Y'\subset Y$ of full $\mu$-measure and all
$x=(x_n)_{n=-\infty}^\infty\in Y'$,
\[
2^{-2k(h-\epsilon)}\le\mu [x_{-k+1}\ldots x_0\cdot x_1\ldots x_k]
 \le 2^{-2k(h+\epsilon)},\quad k\ge n(x).
\]
Let $N_0$ be large enough that $\mu (S)>1/2$, where $S=\{x\in Y' : n(x)\le N_0\}$.

The set $\pi([x_{-k+1}\ldots x_0\cdot x_1\ldots x_k])$ is a $2^{-k}\times 2^{-k}$ square, so its
$s$-Hausdorff measure $\mathcal H^s$ is $2^{-sk}$. Hence
\[
2^{2k(h-\epsilon)}\cdot 2^{-ks}\le \mathcal H^s(\pi(S)) \le
2^{2k(h+\epsilon)}\cdot 2^{-ks}.
\]
Now the claim follows from the definition of the Hausdorff dimension.
\end{proof}

\begin{thm}
\label{thm:An}
 Suppose $H$ is an interior hole; that is to say $\overline{H} \subset (0,1) \times (0,1)$.
 Then $\dimh \J(H) > 0$.
\end{thm}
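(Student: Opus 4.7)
The plan is to produce a closed, shift-invariant subset $Y\subset\{0,1\}^{\BbZ}$ of positive topological entropy whose $\pi$-image lies inside $\J(H)$, and then invoke Lemma~\ref{lem:entropy-dim} to upgrade this to positive Hausdorff dimension. Since $\overline H$ is compact and sits inside the open square, pick $\epsilon>0$ with $\overline H\subset[\epsilon,1-\epsilon]^2$ and an integer $k$ with $2^{-k}<\epsilon$. Setting $N:=2k$, I would take
\[
Y:=\bigl\{\mathbf x\in\{0,1\}^{\BbZ}:\text{all finite maximal monochromatic runs of }\mathbf x\text{ have length }\ge N\bigr\},
\]
which is plainly closed and shift-invariant.

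To check $\pi(Y)\subset\J(H)$, fix $\mathbf x=(x_n)_{n\in\BbZ}\in Y$ and $m\in\BbZ$; then $\pi(\sigma^m\mathbf x)=\bigl(0.x_{m+1}x_{m+2}\dots,\;0.x_mx_{m-1}\dots\bigr)$. Let $R=[a,b]$ be the maximal monochromatic run of $\mathbf x$ containing position $m+1$, so $|R|\ge N$. If $m+1=a$, then $x_{m+1},\dots,x_{m+N}$ are all equal, forcing the first coordinate of $\pi(\sigma^m\mathbf x)$ into $[0,2^{-k}]\cup[1-2^{-k},1]\subset[0,\epsilon)\cup(1-\epsilon,1]$. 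Otherwise $m,m+1\in R$, and the forward monochromatic prefix from $m+1$ (length $b-m$) together with the backward monochromatic prefix from $m$ (length $m-a+1$) sum to $|R|\ge 2k$, so one of them has length $\ge k$, pushing the corresponding coordinate of $\pi(\sigma^m\mathbf x)$ outside $[\epsilon,1-\epsilon]$. Either way $B^m\pi(\mathbf x)\notin\overline H$, giving $\pi(Y)\subset\J(H)$.

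For the entropy lower bound, $Y$ contains every image of the block substitution $0\mapsto 0^N$, $1\mapsto 1^N$ applied to an arbitrary bi-infinite $0$-$1$ sequence; in particular all $2^m$ concatenations $z_1^N z_2^N\cdots z_m^N$ with $z_i\in\{0,1\}$ occur as distinct length-$mN$ factors of $Y$, whence $h(Y)\ge 1/N>0$. Lemma~\ref{lem:entropy-dim} then yields $\dimh\pi(Y)=2h(Y)>0$, and the inclusion $\pi(Y)\subset\J(H)$ completes the proof.

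The principal obstacle is the first step: one has to translate the geometric buffer condition $\overline H\subset[\epsilon,1-\epsilon]^2$ into the purely symbolic requirement that near every shift position either the forward or the backward tail of the coding begins with a long monochromatic block. The choice $N=2k$ is calibrated precisely so that any interior cut of a length-$N$ run leaves $\ge k$ equal symbols on at least one side, which makes the case analysis uniform; the remaining entropy estimate is then elementary.
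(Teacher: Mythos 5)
Your proof is correct and follows essentially the same strategy as the paper's: build a closed, shift-invariant subset $Y\subset\{0,1\}^{\BbZ}$ of positive entropy whose $\pi$-image avoids $\overline{H}$, and then invoke Lemma~\ref{lem:entropy-dim}. The only difference is the concrete subshift --- you use the SFT of sequences whose monochromatic runs all have length at least $2k$, which at every cut pushes one coordinate near $0$ or near $1$, whereas the paper uses $\{10^n,10^{n+1}\}^*$, which pushes one coordinate near $0$ only; both yield positive entropy and the same conclusion.
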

\begin{proof}
Define $A_n$ as the Cantor set $\{1 0^n, 1 0^{n+1}\}^*$.
That is
\begin{equation*}
 A_n := \left\{ (u_k)_{k=-\infty}^\infty : (u_k)\
 \text{ consists of concatenations of}\ 10^n\ \text{and}\ 10^{n+1} \right\}. \\
\end{equation*}
We claim that for every interior hole $H$, there exists an $n$ such that
 $A_n$ avoids $H$.
Since the topological entropy of $A_n$ is clearly positive for each $n$,
Lemma~\ref{lem:entropy-dim} will then yield the desired result.

Since $H$ is an interior hole, there exists $\epsilon>0$ such that if $(x,y) \in H$ then $x, y > \epsilon$.
Pick $n$ such that $0.0^{\lfloor n/2 \rfloor} 1 (0^n 1)^\infty < \epsilon$.
This implies that if $(x,y) \in A_n$ then for all $i \in \BbZ$ we have that
    $(x_i, y_i) = B^i(x,y)$ has either $x_i < \epsilon$ or $y_i < \epsilon$.
Hence the orbit of $B^i(x,y)$ avoids $H$ as required, which proves the result.
\end{proof}

\begin{rmk}
This means that, in a startling contrast to the one-dimensional case (see
Theorem~\ref{thm:doubling}), there are holes
of size arbitrarily close to full measure which are avoided by a set of positive Hausdorff dimension
and also arbitrarily small connected holes whose survivor set contains only
the two orbits on the boundary of $X$.
\end{rmk}

\begin{rmk}
\label{rmk:required points}
By considering the Cantor set $\{1 0^n, 10^{n+1}\}^*$ we see from the above
      proof that we must contain a point of the form $\{(0, 1/2^n), (1/2^n, 0)\}$ for some $n \geq 1$.
Similarly by considering the Cantor set $\{0 1^n, 01^{n+1}\}^*$ we see that we must contain
      a point of the form $\{(1, 1-1/2^n), (1-1/2^n, 1)\}$ for some $n \geq 1$.
We show in Section \ref{sec:comp} that if a convex set $H$ is a dimension trap
    such that $H$ is closed under the symmetry $(x,y) \leftrightarrow (1-y, 1-x)$ then
    either $\mathcal L(H) > 0.1381$ or $H$ contains the points $(0,1/2)$ and $(1/2, 1)$ or it
    contains the points $(1/2, 0)$ and $(1, 1/2)$.
This motivates the restriction considered in Section \ref{sec:01}.

Similarly if a convex set $H$ is a dimension trap
    such that $H$ is closed under the symmetry $(x,y) \leftrightarrow (1-x, 1-y)$ then
    either $\mathcal L(H) > 0.13$ or $H$ contains the points $(0,1/2)$ and $(1, 1/2)$ or it
    contains the points $(1/2, 0)$ and $(1/2,1)$.
This motivates the restriction considered in Section \ref{sec:55}.
\end{rmk}

In view of the symmetry with respect to $y=x$, we
divide this paper into two main sections.
In Section~\ref{sec:01}, we consider dimension traps that
    contain $(0,1/2)$ and $(1/2,1)$.
We show that the open convex polygon~$\Delta$ with
   corners $(0,1), (1/2, 1), (1/3,2/3)$ and  $(0,1/2)$
   is both a cycle trap and a dimension trap.
We also give two polygons contained in $\Delta$ such that
   they are dimension traps that are in some sense optimal.

In Section~\ref{sec:55}, we consider traps and dimension traps that
    contain $(1/2,1/2)$.
We show that the open parallelogram~$P$ with
   vertices $(1/3,2/3), (1/2,1), (2/3,1/3)$ and $(1/2,0)$ is a cycle trap and
   describe all cycles for which there is an element lying on $\partial P$.

We then proceed to construct a sequence of nested polygons,
   the first of which is $P$ where each  of these polygons is a dimension trap.
The limiting polygon is the hexagon with vertices
    $(1/2, 0), (\t, 2\t), (\t, 2-4\t), (1/2, 1), (1-\t, 2-4\t), (1-4\t, 2\t)$ with
    $\t$ being the Thue-Morse constant.

We also consider an asymmetrical family of
    parallelograms with vertices $(1/2, 0), (a, 2a),\linebreak (1/2, 1), (b, 2b-1)$,
    where the $a$ and $b$ are parameterized by the rationals $r\in(0,1)$. For all of these we prove
    that they are neither cycle nor dimension traps -- with the exception of $r=\frac12$, which corresponds to $P$.
    This shows that, unlike the doubling map (see Section~\ref{sec:doubling}), there appears
    to be no natural asymmetric ``route to chaos'' for the baker's map.

Finally, in Section~\ref{sec:comp} we find $\delta>0$ such that any hole whose
measure is less than $\delta$ is not a dimension trap (Theorem~\ref{thm:delta}).

\section{Traps which contain $(0,1/2)$ and $(1/2,1)$.}
\label{sec:01}

\subsection{Symmetric cycle trap that contains $(0,1)$.}
\label{ssec:trap01}
{\ }

Denote by $\Delta$ the open convex polygon with corners $(0,1), (1/2, 1), (1/3,2/3)$
and  $(0,1/2)$. The results about this trap also hold for its reflection in the line
$y=x$ by swapping maxima and minima as needed.
One of these two traps is depicted in Figure~\ref{convex}.

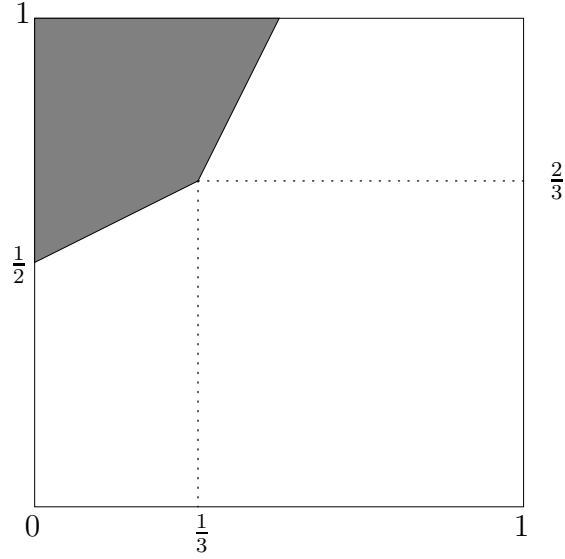
\begin{figure}[t]
\centering
\centering \unitlength=1.3mm
    \begin{picture}(50,53)(0,0)
            \thinlines
  \path(0,0)(0,50)(50,50)(50,0)(0,0)
    \shade\path(0,25)(0,50)(25,50)(16.7,33.35)(0,25)
  \dottedline(50,33.35)(16.7,33.35)
  \dottedline(16.7,33.35)(16.7,0)
    \put(-1,-3){$0$}
    \put(49,-3){$1$}
    \put(-2,49.5){$1$}
    \put(16.2,-3.5){$\frac13$}
            \put(52.5,32.6){$\frac23$}
            \put(-2.5,24){$\frac12$}
          \end{picture}

\bigskip
\caption{The cycle trap $\Delta$} \label{convex}
\end{figure}

Let
\begin{align*}
\widetilde\Omega =
&     \{ \ldots 0^{a_2} 1 0^{a_1} 1 0^{a_0} \cdot 1^{b_0} 0 1^{b_1} 0 1^{b_2} \ldots
        \mid 1 \leq a_i \leq a_{i+1}, 1 \leq b_i \leq b_{i+1}\}
          \cup \\
    &\{\dots 1^{b_0} 0 1^{b_0} 0  \cdot 1^{b_0} 0 1^{b_1} 0 1^{b_2} \ldots
        \mid 1 \leq b_i \leq b_{i+1}\}
          \cup \\
    &\{ \ldots 0^{a_2} 1 0^{a_1} 1 0^{a_0} \cdot 1 0^{a_0} 1 0^{a_0} \ldots
        \mid 1 \leq a_i \leq a_{i+1}\}.
\end{align*}
Here we allow $a_i = \infty$ or $b_j = \infty$ or both.
Let \[ \Omega = \cup_{k \in \BbZ} \sigma^k(\widetilde\Omega).\]

\begin{lemma}
\label{lem:Omega}
The orbit of a point does not fall into $\Delta$ if and only if the point
    is in $\Omega$.
\end{lemma}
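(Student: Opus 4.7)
The plan is to translate $\pi(z)\in\Delta$ into symbolic conditions on the bi-infinite sequence and then prove the two directions.

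The defining inequalities of $\Delta$ unroll to the following criterion: $\pi(z)\in\Delta$ iff $z_0=1$, $z_1=0$, the tail $z_0 z_{-1} z_{-2}\ldots$ is lexicographically larger than $z_2 z_3 z_4\ldots$ (encoding $y>2x$), the tail $z_{-1} z_{-2} z_{-3}\ldots$ is lexicographically larger than $z_1 z_2 z_3\ldots$ (encoding $x<2y-1$), and $z$ is neither of the two boundary orbits $\ldots 1^{\infty}\cdot 0^{\infty}$ and $\ldots 0^{\infty}\cdot 1^{\infty}$ (this handles $x>0$, $y<1$). Call these comparisons (C) and (D). Thus $\pi(\sigma^n z)\in\Delta$ can happen only at a $10$-transition in $z$, i.e.\ a position $n$ with $z_n z_{n+1}=10$; at each such $n$ the shifted (C) and (D) together decide whether the shift is in $\Delta$.

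Forward direction ($\Omega\subseteq\J(\Delta)$). I fix $z\in\widetilde\Omega$ and walk through every $10$-transition in $z$. In set~1 these are the right end of each block $1^{b_k}$ (right of the dot) and each isolated $1$ (left of the dot). At the end of the $k$-th right block the leftward tail reads $1^{b_k} 0^{a_0} 1\, 0^{a_1} 1 \ldots$ while the rightward tail reads $1^{b_{k+1}} 0\, 1^{b_{k+2}} 0\ldots$; the hypothesis $b_k \le b_{k+1}$, applied recursively to the deeper blocks, forces the rightward tail to weakly dominate the leftward tail digit by digit, so (C) fails and the shift lies outside $\Delta$. A mirror computation at each isolated $1$ on the left uses $a_i\le a_{i+1}$ to make~(D) fail. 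Sets~2 and~3 are handled in parallel: the periodic side produces exact tail equalities (so the shift lands on $\partial\Delta$) while the monotone side takes care of the remaining transitions.

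Reverse direction ($\J(\Delta)\subseteq\Omega$). I argue the contrapositive: if no shift of $z$ lies in $\widetilde\Omega$, some shift lies in $\Delta$. Two short local computations drive the argument. First, at a $10$-transition flanked by a $0$-run of length $a$ on the left of the $1$ and a $0$-run of length $a'$ on the right of the $0$, with $a<a'$ and $a'\ge 2$, both (C) and (D) hold, so the shift lies in $\Delta$. Second, at a $10$-transition preceded by a $1$-run of length $\ge 2$ and followed by a $0$-run of length $\ge 2$, both (C) and (D) hold as well. Hence $z\in\J(\Delta)$ (away from the two boundary orbits) forces: (i) the $0$-runs flanking the isolated $1$'s on the left are weakly non-increasing as one moves toward the dot (and symmetrically for the $1$-runs on the right), and (ii) no $10$-transition sandwiches two runs both of length $\ge 2$. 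A short case analysis then shows that these constraints admit exactly three templates: on each side either the $1$'s are isolated and the $0$-runs monotone (set~1), or a long $1$-run on the left cascades via the constraints to an exactly periodic pattern $1^{b_0}0$ (set~2), or the mirror of the latter (set~3). Shifting the pivot of the monotonicities to the dot produces the required $k$ with $\sigma^k z\in\widetilde\Omega$; sequences with only finitely many $1$'s or $0$'s on one side are absorbed into the $a_i,b_j=\infty$ conventions, with the two boundary orbits corresponding to the fully degenerate case.

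The main obstacle is the reverse direction, and within it, identifying the correct pivot and showing that the three templates of $\widetilde\Omega$ exhaust the possibilities. The corner cases $a'=1$ or $b=1$ in the local computations, where the two tails tie for several digits before the comparison resolves, and the analysis inside long runs, are the most delicate bookkeeping.
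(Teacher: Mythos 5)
Your proposal is correct and follows essentially the same route as the paper: restrict attention to the cylinder $[1\cdot0]$ (equivalently, to $10$-transitions), translate membership in $\Delta$ into the two lexicographic comparisons that encode $y>2x$ and $y>x/2+1/2$, and then use the forbidden pattern $[11\cdot00]$ together with the monotonicity of the $0$-runs (and, by the symmetry $(x,y)\leftrightarrow(1-y,1-x)$, of the $1$-runs) to pin down the three templates of $\widetilde\Omega$. The only stylistic difference is that you argue the converse direction by contrapositive and with the two directions in the reverse order; the level of detail in the case analysis is comparable to the paper's own, which also leaves the nested run-by-run comparisons implicit.
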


\begin{proof}
We first observe that $[1 1 \cdot 0 0] \subseteq \Delta$.
This implies that if a point in $\J(\Delta)$ has a subsequence $1^n 0$ with $n \geq 2$ then this subsequence must be
    of the form $1^n 0 1^m$.
Further more as $\ldots 1^{m+1} \cdot 0 1^{m} 0 \ldots$ falls into $\Delta$ we see that if
    $\ldots 01^n\cdot 01^m0\ldots \not\in\Delta$ then $n \leq m$.
Similarly if we have a subsequence $1 0^n$ we must have a subsequence
    $0^m 1 0^n$ with $n \leq m$.
Thus the only points that avoid $\Delta$ are points of the form
\begin{align*}
\ldots 0^{a_2} 1 0^{a_1} 1 0^{a_0} 1^{b_0} 0 1^{b_1} 0 1^{b_2} \ldots, \\
\ldots 1^{b_0} 0 1^{b_0} 0 1^{b_0} 0 1^{b_0} 0 1^{b_1} 0 1^{b_2} \ldots,\ \  \mathrm{or} \\
\ldots 0^{a_2} 1 0^{a_1} 1 0^{a_0} 1 0^{a_0} 1 0^{a_0} 1 0^{a_0} \ldots,
\end{align*}
where $1 \leq a_1 \leq a_2 \leq a_3 \leq \dots$ and $1 \leq b_1 \leq b_2 \leq b_3 \dots$.

Conversely, let a point be in $\Omega$. To prove that its orbit is disjoint from $\Delta$,
it suffices to check the intersection of the orbit with the cylinder $[1\cdot0]$
    -- see Figure~\ref{fig:cylinders}.
If we have the bi-infinite sequence is of the form
    $\ldots 0^{a_k}1\cdot 0^{a_{k-1}}1\ldots$ with $a_k \geq a_{k-1}$, then
    \[ y = 0.1 0^{a_k} 1 \ldots \leq 0.1 0^{a_{k-1}} 1 \ldots = \frac{1+x}{2} \]
from which it follows that $(x,y) \not \in \Delta$.
If instead the bi-infinite sequence is of the form
    $\ldots 1^{b_k} \cdot 0 1^{b_{k+1}} 0 \ldots$ then
    \[ y = 0.1^{b_k} 0 \ldots \leq 1^{b_{k+1}} 0 \ldots = 2 x. \]
Again, it follows then that $(x,y) \not \in \Delta$.
\end{proof}

\begin{cor}
If a point does not fall into $\Delta$ then the corresponding bi-infinite 0-1 sequence
is either eventually periodic both to the left and to the right
(of which there are only countably many) or comes arbitrarily close to $(0,1/2)$ or $(1/2,1)$.
\end{cor}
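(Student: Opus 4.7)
By Lemma~\ref{lem:Omega}, any bi-infinite sequence whose orbit avoids $\Delta$ lies in $\Omega=\bigcup_{k\in\BbZ}\sigma^k(\widetilde\Omega)$. Since both properties in the corollary depend only on the full orbit $\{\sigma^k\mathbf x:k\in\BbZ\}$ (shifting $\mathbf x$ only permutes the points $B^k\pi(\mathbf x)$), I may assume without loss of generality that $\mathbf x\in\widetilde\Omega$. Each of the three forms of $\widetilde\Omega$ is parameterized by one or two non-decreasing sequences $(a_i)_{i\ge 0}$ and $(b_j)_{j\ge 0}$ of positive integers (with the value $\infty$ allowed, corresponding to an eventual tail of $0$'s or $1$'s on one side), and in the second and third forms one side is already periodic by construction.

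The plan is to establish the following dichotomy. Either every parameter sequence present is eventually constant (possibly equal to $\infty$), in which case $\mathbf x$ is eventually periodic on both sides and is determined by a finite amount of data, giving a countable set of such sequences; or else at least one of $(a_i)$, $(b_j)$ is strictly unbounded, in which case I claim the orbit comes arbitrarily close to $(0,1/2)$ (when $(a_i)$ is unbounded) or $(1/2,1)$ (when $(b_j)$ is unbounded).

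For the unbounded case, suppose first that $(a_i)$ is unbounded; the other case is symmetric. For each $k\ge 0$ let $p_k=a_0+a_1+\cdots+a_k+k$, the distance from the dot to the $k$-th $1$ reading leftwards in $\mathbf x$. Then the shifted sequence $\sigma^{-p_k}\mathbf x$ has the local form
\[
\ldots 0^{a_{k+1}}\,1\,\cdot\,0^{a_k}\,1\,0^{a_{k-1}}\,1\ldots
\]
about the dot, so by~\eqref{eq:pi} the corresponding orbit point $(x',y')=B^{-p_k}\pi(\mathbf x)$ satisfies
\[
0\le x'\le 2^{-a_k},\qquad 0\le y'-\tfrac{1}{2}\le 2^{-a_{k+1}-1}.
\]
Since $(a_i)$ is non-decreasing and unbounded, $a_k$ and $a_{k+1}$ both tend to $\infty$, and hence $(x',y')\to(0,1/2)$. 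Symmetrically, when $(b_j)$ is unbounded, the shift that places a long run $1^{b_k}$ immediately to the left of the dot (so that the sequence looks locally like $\ldots 1^{b_k}\cdot 0\,1^{b_{k+1}}\,0\ldots$) produces orbit points with $y'\to 1$ and $x'\to 1/2$.

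There is no conceptual obstacle here; the argument is essentially bookkeeping of shifts and the binary expansions they induce, together with the observation that the $\infty$-valued parameters feed directly into the "eventually constant" clause and require no separate treatment. The only mildly delicate step is choosing the shift $\sigma^{-p_k}$ correctly so that both $x'$ and $y'-\tfrac12$ are small simultaneously, which is exactly where the monotonicity $a_k\le a_{k+1}$ is used.
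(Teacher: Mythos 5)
Your proof is correct and follows essentially the same approach as the paper's (very terse) argument: by Lemma~\ref{lem:Omega} the sequence lies in $\Omega$, and if it is not eventually periodic on both sides then one of the non-decreasing parameter sequences $(a_i)$ or $(b_j)$ is unbounded, whence the shifts $\ldots 0^{a_{k+1}}1\cdot 0^{a_k}1\ldots$ (resp.\ $\ldots 1^{b_k}\cdot 01^{b_{k+1}}0\ldots$) accumulate at $(0,1/2)$ (resp.\ $(1/2,1)$). Your version simply makes the shift bookkeeping and the coordinate estimates explicit.
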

\begin{proof}
If our sequence is not eventually periodic to the left or the right, then we have shifts of
    one of the two forms
\begin{align*}
\ldots 10^{a_n} 1 &\cdot 0^{a_{n-1}} 1 \ldots, \\
 \ldots 01^{b_{m-1}} &\cdot 01^{b_m} 0 \ldots,
\end{align*}
with $a_n, b_m$ arbitrarily large.
\end{proof}

\begin{cor}
The convex polygon $\Delta$ is a cycle trap.
\end{cor}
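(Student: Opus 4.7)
The plan is to invoke Lemma~\ref{lem:Omega}, which gives $\J(\Delta)=\pi(\Omega)$, so every cycle of $B$ lying in $\J(\overline\Delta)\subseteq\J(\Delta)$ corresponds to a periodic bi-infinite sequence $\overline w\in\Omega$. It then suffices to show that any such $\overline w$ has a cyclic shift whose image under $\pi$ lies on $\partial\Delta\subset\overline\Delta$.

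I would first classify the periodic sequences in $\Omega$. In each of the three forms defining $\widetilde\Omega$, one of the block-length sequences $(a_i)$ or $(b_i)$ is required to be non-decreasing in the relevant direction. For a periodic sequence the block lengths must cycle through finitely many values, and monotonicity then forces them to be constant. Hence any non-trivial periodic $\overline w\in\Omega$ has all $1$-runs of a common length $b\ge 1$ and all $0$-runs of a common length $a\ge 1$, that is, $\overline w=\overline{1^b 0^a}$. The observation $[11\cdot 00]\subseteq\Delta$ from the proof of Lemma~\ref{lem:Omega} forbids the factor $1100$ in $\overline w$, ruling out $a,b\ge 2$. Thus the cycles in $\Omega$ are precisely the two families $\overline{1^b 0}$ for $b\ge 1$ and $\overline{10^a}$ for $a\ge 1$, meeting at the 2-cycle $\overline{10}$.

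Second, I would verify explicitly that each family meets $\overline\Delta$. For $\overline{1^b 0}$, take the cyclic shift with $x_1=0$ and $x_2\cdots x_{b+1}=1^b$; a direct computation using \eqref{eq:pi} yields
\[
\pi(\overline w)=\left(\frac{2^b-1}{2^{b+1}-1},\,\frac{2^{b+1}-2}{2^{b+1}-1}\right),
\]
which satisfies $y=2x$ with $x\in[1/3,1/2)$, so it sits on the edge of $\Delta$ from $(1/3,2/3)$ to $(1/2,1)$. Symmetrically, for $\overline{10^a}$ the shift with $x_1\cdots x_a=0^a$ and $x_{a+1}=1$ gives
\[
\pi(\overline w)=\left(\frac{1}{2^{a+1}-1},\,\frac{2^a}{2^{a+1}-1}\right),
\]
which lies on the edge $y=(x+1)/2$ between $(0,1/2)$ and $(1/3,2/3)$. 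In both cases the orbit meets $\overline\Delta$.

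The main subtlety is the classification step: ensuring that every periodic sequence consistent with one of the three forms of $\widetilde\Omega$ really collapses to $\overline{1^b 0^a}$, and handling the degenerate entries $a_i$ or $b_i$ equal to $\infty$ (which only produce the boundary fixed-point orbits $\overline 0,\overline 1$, treated as exceptions by the paper's trap conventions, in parallel with the exclusions built into the complete-trap definition). Once the classification is settled, the two explicit shift computations complete the proof.
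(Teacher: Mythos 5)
Your argument follows essentially the same route as the paper's terse two-line proof (classify the cycles in $\Omega$, then verify they meet $\overline\Delta$), with welcome added detail in the explicit coordinate computations showing the cycles land on the edges $y=2x$ and $y=(x+1)/2$. One small imprecision: the intermediate claim that every periodic element of $\Omega$ is of the form $\overline{1^b0^a}$ is a mild over-approximation---the three forms defining $\widetilde\Omega$ already force $a=1$ or $b=1$ for a periodic sequence before you ever invoke $[11\cdot 00]\subset\Delta$---but since your subsequent step recovers the correct restricted classification, the conclusion is sound.
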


\begin{proof}
The only cycles in $\Omega$ are of the form $(0^n 1)^\infty$ or $(1^n 0)^\infty$ for some $n$.
All of these cycles intersect $\overline \Delta$.
\end{proof}

The next result shows that $\Delta$ is a dimension trap.
This theorem is improved later by Theorem \ref{thm:Delta_0}.
It is included here because it introduces a key idea that is
    used in later proofs.

\begin{thm}\label{thm:Gamma-dimtrap}
The convex polygon $\Delta$ is a dimension trap.
\end{thm}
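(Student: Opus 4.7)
The plan is to invoke Lemma~\ref{lem:entropy-dim}: it suffices to show that the topological entropy of a subshift containing $\pi^{-1}(\J(\Delta))$ is zero. By Lemma~\ref{lem:Omega}, $\J(\Delta) \subseteq \pi(\overline{\Omega})$, and $\overline{\Omega}$ is a subshift, so I will bound the number of length-$n$ factors appearing in sequences of $\Omega$ and show this count grows subexponentially in $n$.

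The structure given in Lemma~\ref{lem:Omega} forces strong restrictions on such factors. Any sequence in $\Omega$ is a shift of a sequence in $\widetilde\Omega$, which has the form $\ldots 0^{a_2} 1 0^{a_1} 1 0^{a_0}\cdot 1^{b_0} 0 1^{b_1} 0 1^{b_2} \ldots$ with $1 \leq a_0 \leq a_1 \leq \cdots$ and $1 \leq b_0 \leq b_1 \leq \cdots$. Hence every maximal run of $1$s strictly to the left of the turning point has length exactly one, every run of $0$s strictly to the right has length exactly one, and the lengths of the ``long'' runs (of $0$s on the left, $1$s on the right) are non-decreasing as one moves away from the turning point.

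To count factors $w$ of length $n$ in $\Omega$, I would specify the following data: (i) the position of the turning point relative to $w$, giving at most $n+2$ choices, including the degenerate cases where $w$ lies entirely on one side; (ii) a partial initial run and a partial terminal run, giving at most $n^2$ joint options; (iii) the non-decreasing list of lengths of the full ``long'' $0$-runs to the left of the turning point that are contained in $w$, and the analogous non-decreasing list of $1$-run lengths on the right. Each of the lists in (iii) is a partition of some integer $m \leq n$, so the number of choices on each side is at most $(n+1)\,p(n)$, where $p$ is the classical partition function. The total count of length-$n$ factors is therefore at most $(n+2)\, n^2\, \bigl((n+1)\, p(n)\bigr)^2$, which by the Hardy--Ramanujan asymptotic $p(n) = e^{O(\sqrt{n})}$ equals $e^{O(\sqrt{n})}$.

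Consequently $h(\overline{\Omega}) = \lim_{n\to\infty} \frac{1}{n}\log_2 \bigl(\text{number of length-}n\text{ factors}\bigr) = 0$, and Lemma~\ref{lem:entropy-dim} yields $\dim_{\tt H} \J(\Delta) \leq \dim_{\tt H} \pi(\overline{\Omega}) = 0$. The main obstacle is the bookkeeping in step (iii) — carefully handling the cases (factor entirely on one side of the turning point, factor straddling it, factor consisting of a single run, etc.) without double-counting or omitting configurations — but the essential monotonicity of run lengths makes the subexponential entropy bound transparent once these cases are organised.
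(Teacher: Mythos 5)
Your proposal is correct and follows essentially the same route as the paper: reduce to a subshift-entropy computation via Lemma~\ref{lem:entropy-dim} and Lemma~\ref{lem:Omega}, then bound the number of length-$n$ words by the position of the turning point, the two partial end-runs, and a pair of non-decreasing run-length lists (partitions), concluding subexponential growth from the Hardy--Ramanujan asymptotic. The paper organises the count via $\Omega_n$ (restrictions to $[-n,n]$) rather than factors of $\overline{\Omega}$, but the bookkeeping and the resulting bound $e^{O(\sqrt{n})}$ are the same.
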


\begin{proof}
By Lemmas~\ref{lem:Omega} and \ref{lem:entropy-dim}, it suffices to
    show that $\dimh(\Omega) = 0$.
Let $\Omega_n = \{(x_k)_{k=-n}^n : (x_k)_{k=-\infty}^\infty \in \Omega \}$.
We will show that
    $\lim \frac{1}{n} \log\#\Omega_n = 0$, which will prove the result.

Consider a point in $\Omega_n$.
We see that we can write this as
   \[
   0^s 1 0^{a_\ell} 1 0^{a_{\ell-1}} 1 \ldots 1 0^{a_0} 1^{b_0} 0 1^{b_1} 0 \ldots 0 1^{b_{m}} 0 1^{t},
   \]
where $1 \leq a_0 \leq a_1 \leq \dots \leq a_\ell$,
      $1 \leq b_0 \leq b_1 \leq \dots \leq b_m$,
      $0 \leq s, t \leq n$.
Here there may be no $a_i$ or no $b_i$, giving an empty sum.
Let
\begin{align*}
k & = \left| 0^s 1 0^{a_\ell} 1 0^{a_{\ell-1}} 1 \ldots 1 0^{a_0} \right| \\
  & = \left( \sum_{i=0}^\ell a_i +1 \right) + s
\end{align*}
and similarly
\begin{align*}
2n-k & = \left|1^{b_0} 0 1^{b_1} 0 \ldots 0 1^{b_{m}} 0 1^{t} \right| \\
  & = \left( \sum_{i=0}^m b_i +1 \right) + t
\end{align*}
For fixed $k$, we see that the number of such points is bounded above by the number
    of partitions of $k-s$ and $2n-k-t$ respectively.
Denote the number of partitions of $i$ by $p(i)$.
This gives that (with $p(i)\equiv0$ for $i\le0$)
\[
\# \Omega_n  \leq \sum_{k=0}^{2n} \sum_{s,t=0}^n p(k-s) p(2n - k - t)
               \leq (2n+1) n^2 p(2n)^2.
\]
By the Hardy-Ramanujan formula, we have that
    $p(n) \sim \frac{1}{n} c_1 \mathrm{e}^{c_2 \sqrt{n}}$ as $n\to\infty$
    for some some positive constants $c_1$ and $c_2$.
Thus, $\frac{1}{n} \log \#\Omega_n \leq c_3 \frac{\log n}{\sqrt{n}}$ for
    some constant $c_3$, from which the result follows.
\end{proof}

\subsection{Symmetric and asymmetric dimension traps containing $(0,1)$.}
\label{ssec:dim trap01}
{\ }

Let $\Delta' \subset \Delta$ be the quadrilateral  with vertices
    \[
    (0, 1/2), (5/24, 2/3), (1/2, 1), (0, 1)
    \]
and $\Delta'' \subset \Delta$ the symmetric image of $\Delta'$
under $(x,y)\to(1-y, 1-x)$ with vertices
    \[
    (0, 1/2), (1/3, 19/24), (1/2, 1), (0, 1)
    \]
-- see Figure~\ref{convex2} for $\Delta'$.
\begin{figure}[t]
\centering
\centering \unitlength=1.3mm
  \begin{picture}(50,53)(0,0)
    \thinlines
    \path(0,0)(0,50)(50,50)(50,0)(0,0)
    \shade\path(0,25)(0,50)(25,50)(10.42,33.35)(0,25)
    \dottedline(10.42,33.35)(10.42,0)
    \dottedline(10.42,33.35)(50, 33.35)
    \put(-1,-3){$0$}
    \put(49,-3){$1$}
    \put(-2,49.5){$1$}
    \put(9.2,-3.5){$\frac{5}{24}$}
    \put(52.5,32.85){$\frac{2}{3}$}
    \put(-2.5,24){$\frac12$}
  \end{picture}
\bigskip
\caption{The convex dimension trap $\Delta'$} \label{convex2}
\end{figure}
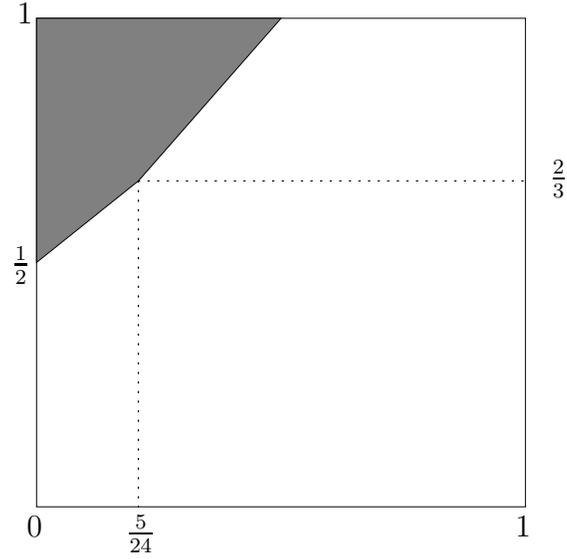
We claim that these two quadrilaterals are minimal convex dimension traps in the
    sense that if a set
    $\Delta_0 \subset \Delta$ is a dimension trap, then $\Delta_0$
    must contain the vertices $(0,1/2), (1/2, 1), (0, 1)$ and
    at least one of the vertices $(5/24, 2/3)$ or $(1/3, 19/24)$.

\begin{thm}\label{thm:Delta_0}
Let $\Delta_0 \subset \Delta$ be a dimension trap, then
either $\Delta' \subset \Delta_0$ or $\Delta'' \subset \Delta_0$.
\end{thm}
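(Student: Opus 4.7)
The theorem reduces to showing that $\overline{\Delta_0}$ contains each of the four vertices of $\Delta'$ (or of $\Delta''$): the three common vertices $(0,1/2),(1/2,1),(0,1)$, and at least one of the critical vertices $(5/24, 2/3),(1/3, 19/24)$. Convexity of $\Delta_0$ then delivers $\Delta'\subseteq \Delta_0$ or $\Delta''\subseteq \Delta_0$. Each missing vertex will be ruled out by exhibiting a positive-entropy subshift whose $\pi$-image avoids $\Delta_0$, contradicting the dimension trap hypothesis via Lemma~\ref{lem:entropy-dim}.

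Membership of $(0,1/2)$ and $(1/2,1)$ in $\overline{\Delta_0}$ is immediate from Remark~\ref{rmk:required points}, since among all required points only these two lie in $\overline{\Delta}$. For $(0,1)$, I will use the subshift $Y_N \subset \{0,1\}^{\BbZ}$ of bi-infinite sequences whose maximal runs of $0$'s and $1$'s all have length in $\{N,N+1\}$; this has positive topological entropy. A case analysis on the dot position shows that $\pi(Y_N)$ lies outside $\overline{\Delta}$ except when the dot sits at the last symbol of a $1$-run, in which case the image lies in the $O(2^{-N})$-neighborhood of $(0,1)$. If $(0,1)\notin\overline{\Delta_0}$, then for $N$ large $\pi(Y_N)\cap\Delta_0=\emptyset$.

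For the critical vertex, suppose for contradiction that both $(5/24, 2/3)$ and $(1/3, 19/24)$ lie at distance at least $\epsilon>0$ from $\overline{\Delta_0}$. I consider the sofic shift over the super-alphabet $\{a=01,\ b=0011\}$ with the rule that any two $b$'s are separated by at least $M$ occurrences of $a$; concatenation embeds this as a subshift $C_M\subset\{0,1\}^{\BbZ}$ of positive topological entropy for every $M\geq 1$. A direct computation shows that for $w\in C_M$, every shift $\pi(\sigma^k w)$ lies within $O(4^{-M/2})$ of one of seven limit points: the $2$-cycle $\{(1/3, 2/3),(2/3, 1/3)\}$ (when the dot lies deep in a run of $a$'s) and the five-point defect orbit $(5/24, 2/3)\to(5/12, 1/3)\to(5/6, 1/6)\to(2/3, 7/12)\to(1/3, 19/24)$ (when the dot sits within or adjacent to a $b$-block). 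Four of these points have $x>1/2$ or $y<1/2$ and so lie at distance at least $1/6$ from $\overline{\Delta}$; two are $\epsilon$-far from $\Delta_0$ by hypothesis; and for the remaining point $(1/3, 2/3)\in\partial\Delta$, convexity supplies a positive gap via the elementary observation that the convex hull of $\{(0,1/2),(1/2,1),(0,1),(1/3, y_0)\}$ already contains $(5/24, 2/3)$ whenever $y_0\leq 23/30$, forcing $\overline{\Delta_0}\cap\{x=1/3\}\subset\{y>23/30\}$. Choosing $M$ large enough that $O(4^{-M/2})$ falls below all three gaps gives $\pi(C_M)\cap\Delta_0=\emptyset$ and hence $\dimh\J(\Delta_0)>0$ by Lemma~\ref{lem:entropy-dim}, the desired contradiction.

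The hard part is the orbit analysis of $C_M$, especially the convexity-based argument bounding $\Delta_0$ away from the vertex $(1/3, 2/3)$ of $\Delta$. This is the one step that genuinely uses convexity of $\Delta_0$ (rather than mere vertex containment), and it is what pins down $(5/24, 2/3)$ and $(1/3, 19/24)$ as the precisely correct critical vertices.
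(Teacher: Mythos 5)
Your proposal is correct and follows essentially the same route as the paper: pin down $(0,1/2)$, $(1/2,1)$ via Remark~\ref{rmk:required points}, pin down $(0,1)$ via a positive-entropy run-length subshift (the paper uses $\{0^n1^n,0^{n+1}1^{n+1}\}^*$, you use the equivalent $Y_N$), and force one of the two critical vertices via a positive-entropy subshift built from $0011$-defects in the $(01)^\infty$ background (the paper uses $\{0011(01)^n,0011(01)^{n+1}\}^*$, you use the separation-constrained sofic shift $C_M$, which has the same defect-orbit limit set). The one genuine stylistic difference is in handling the seventh limit point $(1/3,2/3)$: the paper argues by case-split that if $(1/3,2/3)\in\overline{\Delta_0}$ then $\Delta_0=\Delta$ which already contains both critical vertices, whereas you run the contrapositive quantitatively — your $y_0\le 23/30$ hull computation (which checks out: the segment from $(0,1/2)$ to $(1/3,23/30)$ has slope $4/5$ and passes through $(5/24,2/3)$) shows $(1/3,2/3)\notin\overline{\Delta_0}$ and hence at positive distance from the closed set $\overline{\Delta_0}$, after which $M\to\infty$ gives the contradiction; both are fine.
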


Let $\mathrm{hull}(A)$ denote the convex hull of $A\subset\mathbb R^2$.

\begin{cor}
Let $\Delta_0 \subset \Delta$ be a dimension trap symmetric with respect to $x+y=1$.
Then $\mathrm{hull}(\Delta' \cup \Delta'') \subset \Delta_0$.
\end{cor}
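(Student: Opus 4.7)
The plan is to combine the previous theorem with the symmetry assumption. By Theorem~\ref{thm:Delta_0}, the hypothesis that $\Delta_0 \subset \Delta$ is a dimension trap already forces $\Delta_0$ to contain at least one of $\Delta'$ or $\Delta''$. So the only work is to upgrade this dichotomy to ``both'' using the assumed symmetry, and then to close under convex combinations.

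First I would verify the key symmetry observation: applying $(x,y) \mapsto (1-y,1-x)$ to the vertices $(0,1/2),\ (5/24,2/3),\ (1/2,1),\ (0,1)$ of $\Delta'$ yields $(1/2,1),\ (1/3,19/24),\ (0,1/2),\ (0,1)$, which are exactly the vertices of $\Delta''$. Thus $\Delta''$ is the image of $\Delta'$ under reflection in the line $x+y=1$, so the hypothesis that $\Delta_0$ is symmetric with respect to $x+y=1$ means precisely that $\Delta_0$ is invariant under the map that swaps $\Delta'$ and $\Delta''$.

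Now I would argue: by Theorem~\ref{thm:Delta_0}, without loss of generality $\Delta' \subset \Delta_0$. Applying the reflection $(x,y)\mapsto(1-y,1-x)$ to both sides and using the invariance of $\Delta_0$, we get $\Delta''\subset\Delta_0$ as well. Hence $\Delta'\cup\Delta''\subset\Delta_0$. Since $\Delta_0$ is convex (the standing assumption on $H$ after the convexity restriction is imposed), taking convex hulls yields $\mathrm{hull}(\Delta'\cup\Delta'')\subset\Delta_0$, as required.

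There is no real obstacle here; the content is entirely contained in Theorem~\ref{thm:Delta_0}, and this corollary is just the symmetrization of its conclusion. The only thing worth being careful about is stating explicitly that convexity of $\Delta_0$ is being used (which is justified by the global convention ``Henceforward we assume $H$ to be convex'' imposed earlier in the paper).
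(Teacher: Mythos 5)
Your proof is correct and is exactly what the paper intends (the paper leaves the corollary without an explicit proof precisely because it is the immediate symmetrization of Theorem~\ref{thm:Delta_0}). You correctly identify that the reflection across $x+y=1$ is $(x,y)\mapsto(1-y,1-x)$, check that it swaps $\Delta'$ and $\Delta''$, and then invoke convexity of $\Delta_0$ — all the right ingredients, in the right order.
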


\begin{proof}[Proof of Theorem \ref{thm:Delta_0}]
The fact that $\{(0,1/2), (1/2,1)\}\subset\Delta_0$ is contained in the proof
of Theorem~\ref{thm:An} and Remark~\ref{rmk:required points}.
Similarly, we can show that $(0,1)\in\Delta_0$ by considering the Cantor set
    $\{0^n 1^n, 0^{n+1} 1^{n+1}\}^*$.

To see that one of $(5/24,2/3)$ or $(1/3,19/24)$ are in $\Delta_0$, consider the Cantor set
    given by
\[
\{0011 (01)^n, 0011 (01)^{n+1}\}^*.
\]
Let $\epsilon > 0$; we see for $n$ sufficiently large that the cylinder
    $[(01)^n \cdot 0011 (01)^{n}] \subseteq
   (5/24, 2/3)+B_\epsilon$, where $B_\epsilon$ is the disc centred at the origin of radius $\epsilon$.
Furthermore, for $n$ sufficiently large we have
    $[(01)^n 0 0 1 1 \cdot (01)^{n}] \subseteq (1/3, 19/24)+B_\epsilon$.
Hence the Cantor set in question intersects a neighbourhood of these vertices.
Moreover, for $n$ sufficiently large, we see that either
    $x \approx \frac{5}{24}$ for $x > \frac{1}{3} - \epsilon$.
Similarly either $y \approx 19/24$ or $y < 2/3 + \epsilon$.
This shows that a convex subset of $\Delta$ that is a dimension
    trap must contain either $(1/3, 2/3)$ or
    $(5/24, 2/3)$ or $(1/3, 19/24)$.
If this set contains $(1/3, 2/3)$, then it has to be $\Delta$, and by convexity
    it would include the other two points.
Otherwise, it has to contain one of the points $(5/24, 2/3)$ or $(1/3, 19/24)$, and we are done.
\end{proof}

\begin{thm}\label{thm:Delta-prime}
The convex quadrilaterals $\Delta'$ and $\Delta''$ are dimension traps.
\end{thm}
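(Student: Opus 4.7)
By the reflection $S(x,y) = (1-y, 1-x)$, which conjugates $B$ to $B^{-1}$ and maps $\Delta'$ onto $\Delta''$, one has $\J(\Delta'') = S(\J(\Delta'))$, so the two survivor sets share the same Hausdorff dimension. Hence it suffices to prove that $\Delta'$ is a dimension trap. By Lemma~\ref{lem:entropy-dim}, this reduces to showing that the subshift
\[
\Omega' = \{(x_n) \in \{0,1\}^{\BbZ} \;:\; \pi(\sigma^k((x_n))) \notin \Delta' \text{ for all } k \in \BbZ\}
\]
has topological entropy zero.

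Since $\Delta' \subsetneq \Delta$, the subshift $\Omega$ from Lemma~\ref{lem:Omega} satisfies $\Omega \subset \Omega'$, and a sequence lies in $\Omega' \setminus \Omega$ precisely when some shift of it lies in the (non-convex) lens-shaped region $\Delta \setminus \Delta'$ with vertices $(0,1/2)$, $(5/24, 2/3)$, $(1/2, 1)$, $(1/3, 2/3)$. The first step is to extend the computation of Lemma~\ref{lem:Omega} to convert ``shift in $\Delta \setminus \Delta'$'' into symbolic constraints. For a shift of the form $\ldots 0^{a_1} 1 \cdot 0^{a_0} 1 \ldots$ in the cylinder $[1 \cdot 0]$, write $x = 2^{-a_0 - 1}(1 + \eta_x)$ and $y - \tfrac12 = 2^{-a_1 - 2}(1 + \eta_y)$ with $\eta_x, \eta_y \in [0,1)$ determined by the one-sided tails. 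The defining inequalities $\tfrac{x+1}{2} < y \leq \tfrac12 + \tfrac{4x}{5}$ of $\Delta \setminus \Delta'$ (for $x \leq 5/24$) translate into $a_1 \leq a_0$ together with tail conditions of the form $\eta_y > \eta_x$ (when $a_1 = a_0$) or $2^{a_0 - a_1 - 1}(1 + \eta_y) \leq \tfrac45 (1 + \eta_x)$ (when $a_1 < a_0$). An analogous analysis using the slope-$\tfrac87$ edge handles the region $x \geq 5/24$, and the mirror analysis applies to shifts of the form $\ldots 1^{b_1} \cdot 0 1^{b_0} 0 \ldots$.

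The plan is then to adapt the partition-function counting of Theorem~\ref{thm:Gamma-dimtrap}. A length-$2n$ factor of $\Omega'$ is encoded by run-length vectors $(a_i), (b_j)$, and each such factor may be decomposed into non-decreasing monotone segments separated by ``defect'' positions where the inequalities above are active. Because each defect constrains the adjacent tails to a strict sub-interval of $[0,1)$, and those tails are themselves determined by later bits in the factor, each defect contributes at most a polynomial factor to the count. The monotone segments contribute partition-function factors $p(m)$ as in Theorem~\ref{thm:Gamma-dimtrap}. Assembling these bounds should give $\#\Omega'_n \leq \mathrm{poly}(n) \cdot p(2n)^C$ for some constant $C$; the Hardy--Ramanujan estimate then yields $n^{-1} \log \#\Omega'_n = O(n^{-1/2} \log n) \to 0$, hence $h(\Omega') = 0$ and, by Lemma~\ref{lem:entropy-dim}, $\dimh \J(\Delta') = 0$.

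The main obstacle will be the non-locality of the defect inequalities: the quantities $\eta_x$ and $\eta_y$ depend on arbitrarily long one-sided tails of the bi-infinite sequence, so defects cannot be detected in finite windows alone. A natural workaround is to enlarge $\Omega'$ to a combinatorially defined superset $\widetilde\Omega \supset \Omega'$ in which successive defects are required to be separated by monotone stretches long enough to absorb any worst-case tail, and to verify that $\widetilde\Omega$ still has zero entropy. An alternative, matching the substitutional flavour of Section~\ref{sec:doubling}, would be to exhibit $\Omega'$ as the topological closure of an orbit under a composite substitution with parameters near $5/24$ or $2/3$, although this route seems more delicate.
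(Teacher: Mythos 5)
Your two reductions are sound and match the paper's: the reflection $S(x,y)=(1-y,1-x)$ conjugates $B$ to $B^{-1}$ and carries $\Delta'$ to $\Delta''$ (so one need only treat $\Delta'$), and Lemma~\ref{lem:entropy-dim} converts the goal into showing the corresponding subshift has zero entropy. The observation that only $\Omega'\setminus\Omega$ needs new work is also correct.

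However, from that point on your proposal is a plan rather than a proof, and the step on which it hinges does not hold up as stated. You assert that at each ``defect'' (a position where a shift lands in $\Delta\setminus\Delta'$) the inequalities on tails contribute only a polynomial factor to the count of length-$2n$ factors. That is not justified: the constraints $\eta_y>\eta_x$ or $2^{a_0-a_1-1}(1+\eta_y)\le\frac45(1+\eta_x)$ are inequalities on one-sided tails and by themselves admit exponentially many continuations; there is no a priori reason the count collapses to a polynomial per defect. You flag this yourself as the ``non-locality obstacle'' and suggest passing to an undescribed superset $\widetilde\Omega$ or a substitutional description --- but neither is carried out, so the key content is missing.

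What the paper actually does is quite different in mechanism: it shows the $x$-projection of $\J(\Delta')$ has dimension zero by producing an explicit battery of \emph{finite} forbidden factors via cylinder-containment checks (e.g.\ $[11\cdot00]\subset\Delta'$, $[1^{k+2}\cdot01^k0]\subset\Delta'$, $[1110\,1^{k+1}\cdot01^k0]\subset\Delta'$, $[111(011)^n\cdot010]\subset\Delta'$, $[10^k1\cdot0^{k+2}]\subset\Delta'$, $[110(10)^n11\cdot0(10)^m100]\subset\Delta'$, \dots). From these it deduces that any admissible right-tail must eventually lie in one of a handful of rigid families --- monotone run-length sequences $01^{a_1}01^{a_2}0\cdots$ with $3\le a_1\le a_2\le\cdots$, the periodic tails $1110(110)^\infty$, $(011)^\infty$, $(01)^\infty$, $(100)^\infty$, and similar --- each of which is countable or of zero box dimension, and these account for everything. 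The forbidden-factor list is finite in \emph{shape} (parameterized by $k,n,m$) and genuinely local, which is precisely what sidesteps the non-locality problem you identified. So the gap in your proposal is exactly the part you predicted would be delicate: you would need to replace the vague ``polynomial per defect'' heuristic with an explicit combinatorial classification of admissible tails, and the paper shows this classification requires a fairly intricate case split (tails containing $111$, tails containing $10^k1$ with growing gaps, tails in $\{10,110,100\}^*$) that a generic counting argument would not capture.

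One minor remark: once you know the $x$-projection (equivalently, the one-sided forward factor complexity) has zero entropy, you already get $h(\Omega')=0$ for the two-sided subshift, since one-sided and two-sided entropies agree; the separate appeal to the $y$-projection is not logically required, though it is how the paper phrases the conclusion.
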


\begin{proof}
We will show that the projection of $\J(\Delta')$ and $\J(\Delta'')$ onto the
    $x$-axis both have dimension $0$.
We then notice that under the symmetric $(x,y)\to (1-y, 1-x)$ that $\Delta' \leftrightarrow \Delta''$.
From this we conclude that the projection of $\J(\Delta')$ and $\J(\Delta'')$ onto the
    $y$-axis both have dimension $0$.
This will prove that both $\Delta'$ and $\Delta''$ are dimension traps as required.

\medskip\noindent
{\bf Projection of $\Delta''$ on the $x$-axis.}

\medskip\noindent
As with $\Delta$, we see that $[1 1 \cdot 0 0] \subseteq \Delta''$. This follows from the fact
that the $\pi$-image of this cylinder lies above $y=x+\frac12$, which in turn lies inside $\Delta''$.
This implies that $x$ cannot contain $1100$ as a substring.

Our first goal will be to show that either a point avoiding $\Delta''$
    to the right is eventually of the form $01^{a_1} 0 1^{a_2} 0 \ldots $ with
    $3 \leq a_1 \leq a_2 \leq \dots$ or the point to the right is eventually
    of the form $\{10, 110\}^*$.
We will then show that this second case cannot happen.

We first claim that if $k' < k$ and $k \geq 3$ then $(x,y) \in [1 1 0 1^k\cdot 0 1^{k'} 0] \subseteq \Delta''$.
It suffices to shows this for $k' = k-1$ by monotonicity.
We see that in this case that $y > \frac{19}{24}$, and hence we wish to
    show that the points $(x,y)$ lies above the line from $(1/2, 1)$ to
    $(1/3, 19/24)$.
That is, we must show it lies above the line $y = \frac{5}{4} x + \frac{3}{8}$.
We see that
    \[x = 0.0 1^{k-1} 0 \ldots < 0.0 1^k = \frac{1}{2} - \frac{1}{2^{k+1}}, \]
and similarly that
    \[y = 0.1^k 0 1 1 \ldots > 0.1^k 0 1 1 = 1 - \frac{1}{2^k}
    + \frac{1}{2^{k+2}} + \frac{1}{2^{k+3}} = 1 - \frac{5}{2^{k+3}}. \]
This gives us that
 \[ \frac{5}{4} x + \frac{3}{8} < 1 - \frac{5}{2^{k+3}} < y \]
which proves the result.

We next claim that if $k' < k-1$ and $k \geq 3$ then
    $(x,y) \in [1 0 1^k\cdot 0 1^{k'} 0] \subseteq \Delta''$.
It suffices to show this for $k' = k-2$ by monotonicity.
We see that in this case that $y > \frac{19}{24}$, and hence we wish to
    show that the points $(x,y)$ lies above the line from $(1/2, 1)$ to
    $(1/3, 19/24)$.
That is, we must show it lies above the line $y = \frac{5}{4} x + \frac{3}{8}$.
We see that
    \[x = 0.0 1^{k-2} 0 \ldots < 0.0 1^{k-1} = \frac{1}{2} - \frac{1}{2^{k}}, \]
and similarly that
    \[y = 0.1^k 0 1  \ldots > 0.1^k 0 1 = 1 - \frac{1}{2^k} + \frac{1}{2^{k+2}}
    = 1 - \frac{2}{2^{k+2}}. \]
This gives us that
 \[ \frac{5}{4} x + \frac{3}{8} < 1 - \frac{5}{2^{k+2}}
    < 1 - \frac{2}{2^{k+2}} < y \]
which proves the result.

This shows us that if we ever have $0 1^k 0 1^{k'}$ with both
    $k, k' \geq 3$ then the sequence to the right is eventually of the form
    $0 1^{a_1} 0 1^{a_2} 0 \ldots$ with $3 \leq a_i \leq a_{i+1}$.
The dimension of the projection of the set of such points is 0.

We next claim that $(x,y) \in [11 0 11 \cdot 0 1 0] \subseteq \Delta''$.
We see that in this case that $y > \frac{19}{24}$, and hence we wish to
    show that the points $(x,y)$ lies above the line from $(1/2, 1)$ to
    $(1/3, 19/24)$.
That is, we must show it lies above the line $y = \frac{5}{4} x + \frac{3}{8}$.
We see that
    \[x = 0.0 1 0 \ldots \leq 0.0 1 1 = \frac{3}{8}, \]
and similarly that
    \[y = 0.11 0 11  \ldots \geq \frac{27}{32}.\]
This gives us that
 \[ \frac{5}{4} x + \frac{3}{8} \leq \frac{27}{32} \leq y \]
which proves the result.

This shows us that if we have $01^k0$ with $k \geq 3$ then either
    the sequence is eventually of the form $0 1^{a_1} 0 1^{a_2} 0 \ldots$ with
    $3 \leq a_i \leq a_{i+1}$, or the sequence to the right is eventually
    of the form $111 0 (11 0)^\infty$.
Thus the projection of the set of points that contain $01^k0$ for some $k \geq 3$ has dimension $0$.

Lastly we claim that $(x,y) \in [1 0 1 0 11 \cdot 0 1 0 ] \subseteq \Delta''$ or
    has a tail of the above mentioned form.
We see that in this case that $y > \frac{19}{24}$, and hence we wish to
    show that the points $(x,y)$ lies above the line from $(1/2, 1)$ to
    $(1/3, 19/24)$.
That is, we must show it lies above the line $y = \frac{5}{4} x + \frac{3}{8}$.
We may assume at this point that there are no occurrences of $111$ after
    this subsequence, otherwise we are in the above mentioned form.
That is, we must show it lies above the line $y = x + \frac{11}{24}$.
We see that
    \[x = 0.0 1 0 \ldots \leq 0.0 1 0 (110)^\infty = \frac{5}{14}, \]
and similarly that
    \[y = 0.11 0 1 0 1 \ldots \geq \frac{53}{64}.\]
This gives us that
 \[ \frac{5}{4} x + \frac{3}{8} \leq  \frac{23}{28} < \frac{53}{64} \leq y \]
which proves the result.

\medskip\noindent
{\bf Projection of $\Delta'$ on the $x$-axis.}

\medskip\noindent
To prove that this projection has zero dimension,
we first show that the set of $x$ in the projection that contain a substring $111$ has
zero dimension. We then show that the set of $x$ in the projection that contain $000$ has zero dimension.
Lastly, we show that the set of $x$ in the projection that do not contain the substrings $000$ or $111$
    has zero dimension.

Let $(x,y) \in \J(\Delta')$.
The first observation that we make is that, again, $[11\cdot 00] \subset \Delta'$.
The next thing we observe is that we cannot have the
    $1^{k+2} 0 1^k 0$ or $111 0 1^{k+1} 0 1^{k} 0$ as substrings, for all
    $k \geq 1$. Namely, $[1^{k+2} \cdot 0 1^k 0] \subset \Delta'$ and
    $[111 0 1^{k+1} \cdot 0 1^k 0] \subset \Delta'$. The first claim follows
    from the fact this cylinder lies above $y=x+\frac12$; let us prove the second one.
    Note that the line connecting $\left(\frac5{24},\frac13\right)$ and $\left(\frac12,1\right)$
    is $y=\frac{8x+3}7$. Then for the rectangle $\pi([111 0 1^{k+1} \cdot 0 1^k 0])$ we have
\begin{align*}
x&\le \frac12-\frac1{2^{k+2}},\\
y&\ge 1-\frac1{2^{k+1}}+\frac1{2^{k+3}}+\frac1{2^{k+4}}+\frac1{2^{k+5}},
\end{align*}
whence $y>(8x+3)/7$, which is a direct check.

This implies that if we have a $1^k0$ as a substring, for $k \geq 3$,
    then it must be followed by a $1$ (as $1100$ is a forbidden substring).
Thus, the substring will look like $1^k01$.
Next, by the first observation, this must be followed by additional 1s, as
    $1^{k} 0 1^{k-2} 0$ is forbidden.
Thus, the substring will look like $1^k01^{k-1}$.
After this the sequence will look like either $1^k01^{k-1}0$ or $1^k01^k$.
If we are in the second case, then our sequence is of the form
    $1^k 0 1^{k_2} 0 1^{k_3} 0 \dots$ where $3 \leq k \leq k_2
    \leq k_3 \leq \dots$ with the possibility that one of these $k_i$ is
    infinite.
The projection of the points from this second case has zero dimension.

Thus, we can assume our word is of the form $1^k 0 1^{k-1} 0$.
If $k-1 \geq 3$ we can again repeat these argument, hence without loss of
    generality assume that $k = 3$.
Hence the substring which is not forbidden has to be of the form $111 0 11 0$.

We next claim that $111 (0 11)^n 010$ and $111 (011)^n 0111 0110$ are forbidden
substrings for all $n \geq 1$. Namely, $[111(011)^n \cdot 0 1 0] \subset \Delta'$ and
$[111(011)^n 0111\cdot 0110] \subset \Delta'$. Let us prove the first one;
we have $x\le \frac38$ and $y\ge \frac67+\frac1{49}\cdot 8^{-n}>(8x+3)/7$. The second
claim is proved in a similar way as are the rest of such claims. We leave the details
to the interested reader.

This proves that the substrings will be of the form $111 (011)^\infty$ or
$111 (011)^n 01^{a_1} 0 1^{a_2} 0 1^{a_3} \dots$ with
 $3 \leq a_1 \leq a_2 \leq a_3 \leq \dots$.
The projection of this set, again, is of zero dimension.

This shows that the set of $x$ in the projection containing a $111$ as a substring has
zero dimension.

We now consider those $x$ with a substring $10^k 1$ for $k \geq 1$.
We now claim that both $10^k 1 0^{k+2}$ and $1 0^{k} 1 0^{k+1} 1 00$ are
    forbidden for $k \geq 1$.
This follows as $[10^k 1\cdot 0^{k+2}] \subset \Delta'$ and
    $[10^k1 \cdot 0^{k+1} 1 00] \subset \Delta'$.
Thus we see that we are either of the form $1 0^k 1 0^a 1$ with $a \leq k$,
   or $1 0^k 1 0^{k+1} 1 0 1$.
If we are of the first form, and $a \geq 1$ we can repeat this argument.
If instead we are of the form $1 0^k 1 0^{k+1} 1 0 1$ then
    we can again repeat this argument, with $k = 1$.
Thus we can assume that this is eventually contained within
    $\{100, 110, 10\}^*$.

Consider a word in $\{100, 110, 10\}^*$.
We have that the words of the form $110 (10)^n 110 (10)^m 100$ and
   $110 (10)^n 100 (10)^m 100$ are forbidden for all $n, m \geq 0$, since
    $[110(10)^n 11\cdot 0 (10)^m 100] \subset \Delta'$ and
    $[110(10)^n 1\cdot 00 (10)^m 100] \subset \Delta'$.
This tells us that our sequence is eventually of the form
    $110 (10)^{n_1} 110 (10)^{n_2} 110 (10)^{n_3} 110 (10)^{n_4} \dots$ or
    $110 (10)^{n_1} 100 (10)^{n_2} 110 (10)^{n_3} 100 (10)^{n_4} \dots$.

Next, $110 (10)^n 110 (10)^m 110$ is forbidden for
    $n \geq 0$ and $m \geq 2$, as well as for $n = 0, m = 1$.
This follows as $[11011\cdot 010110] \subset \Delta'$ and
    in general $[110(10)^n 11\cdot 0 (10)^m 110] \subset \Delta'$
    for $n \geq 0$ and $m \geq 2$.
This tells us that if our sequence of the form:
    $110 (10)^{n_1} 110 (10)^{n_2} 110 (10)^{n_3} \dots$,
    then eventually either $n_i = 0$ or $n_i =1$ for all $i$.
The projection of this set will have zero dimension as well.

We further observe that $110 (10)^n 100 (10)^m 110$ is forbidden for
    $n \geq 0$, and $m \geq 1$ as well as for $n = m = 0$.
This follows as $[110(10)^n 1\cdot 00 (10)^m 110] \subset \Delta'$
    for $n \geq 0$, and $m \geq 1$ as well as for $n = m = 0$.

This tells us that if we are of the form
    $110 (10)^{n_1} 100 (10)^{n_2} 110 (10)^{n_3} 100 (10)^{n_4} \dots$,
    then $n_2 = n_4 = n_6 = \dots = 0$.
Hence we are of the form $100 110 (10)^{n_1} 100 110 (10)^{n_3} 100 110 \dots$.
We observe that this is a forbidden word for all $n_1, n_3 \geq 0$.

This shows that the projection of set of $x$ in the alphabet $\{10, 110, 100\}^*$ that
    contains a $110$ is of zero dimension.

We observe that $100 (10)^n 100 (10)^m 100$ is forbidden
    for $n \geq 1$ and $m \geq 0$.
This follows as
    $[100(10)^n 1\cdot 00 (10)^m 100] \subset \Delta'$ for
    for $n \geq 1$ and $m \geq 0$.
Hence if a word contains no $110$ then it will be of the form
    $100 (10)^{n_1} 100 (10)^{n_2} 100 (10)^{n_3} \dots$, and
    hence from the comment above, will eventually be of the form
    $(100)^\infty$.
The projection of this set is clearly of zero dimension.

To summarize, if our sequence contains $01^k0$ with $k \geq 3$, then it is
    to the right
    eventually of the form $0 1^{a_1} 0 1^{a_2} 0 \ldots$ with
    $3 \leq a_i \leq a_{i+1}$, or to the right eventually of the form
    $111 0 (11 0)^\infty$.
If instead it contains no occurrences of $01^k0$ with $k \geq 3$, then
    we see that the tail to the right has to be of the form $(011)^\infty$ or
    $(01)^\infty$.
\end{proof}

Thus, Theorems~\ref{thm:Delta_0} and \ref{thm:Delta-prime} yield that if we have a convex hole
in the upper-left quadrant of the square and it is a dimension trap, then it has to contain $\Delta'$
or $\Delta''$, which are the smallest dimension traps of this kind, with area $\frac{13}{96}\approx 0.135$.

Furthermore, as a corollary to this, we see that $\mathrm{hull}(\Delta' \cup \Delta'')$ is the smallest
symmetric (with respect to $x+y=1$) dimension trap of this kind,
with area $\frac{53}{384} \approx 0.1380208333$.

\section{Cycle traps and dimension traps which contain $(1/2, 1/2)$.}
\label{sec:55}
{\ }

Let $P_1$ denote the open parallelogram with vertices $(1/3,2/3), (1/2,1), (2/3,1/3)$ and $(1/2,0)$
--- see Figure \ref{fig:P}.

This parallelogram may also be defined by the inequalities
\[
2x-1 <y <2x \quad \text{and} \quad 2-4x <y < 3-4x.
\]
These inequalities have a clear symbolic meaning, so they are easy to check.
For instance, let $x_1=1$;
then the inequality $y< 2x-1$ means $x_0x_{-1} x_{-2} \ldots \prec x_2x_3 x_4\ldots$, and $y> 3-4x$ means
$x_0x_{-1}x_{-2}\ldots \succ \overline{x_3 x_4 x_5\ldots}$

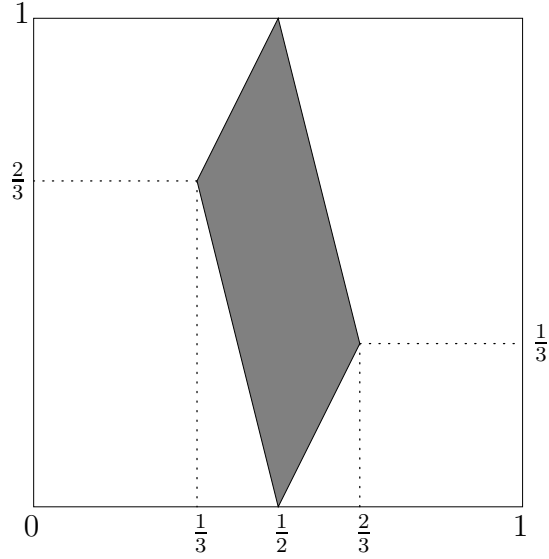
\begin{figure}
\centering
\centering \unitlength=1.3mm
    \begin{picture}(50,53)(0,0)
            \thinlines
  \path(0,0)(0,50)(50,50)(50,0)(0,0)
  \dottedline(16.7,33.35)(16.7,0)
  \dottedline(16.7,33.35)(0,33.35)
    \dottedline(33.35,16.7)(33.35,0)
      \dottedline(33.35,16.7)(50,16.7)
    \shade\path(16.7,33.35)(25,50)(33.35,16.7)(25,0)(16.7,33.35)
    \put(-1,-3){$0$}
    \put(24.5,-3.5){$\frac12$}
    \put(49,-3){$1$}
    \put(-2,49.5){$1$}
    \put(16.2,-3.5){$\frac13$}
    \put(32.85,-3.5){$\frac23$}
        \put(51,16){$\frac13$}
            \put(-2.5,32.6){$\frac23$}
          \end{picture}

          \bigskip

\caption{The cycle trap $P_1$}
\label{fig:P}
\end{figure}

Consider the Thue-Morse word $\t = t_0 t_1 t_2 \ldots = 0110\ 1001\
    1001\ 0110 \ \ldots$
    given by the recurrence $t_{2m} = t_m, t_{2m+1} = 1-t_m$ with $t_0 = 0$.
Define $\t_k = (t_0 t_1 t_2 \ldots t_{2^k-1})^\infty$.
Let $P_k$ be the hexagon with vertices
    $(1/2, 0), (\t_k, 2\t_k), (\t_k, 2-4 \t_k), (1/2, 1), (1-\t_k, 2-4 \t_k), (1-4 \t_k, 2 \t_k)$.
This hexagon may also be defined by the inequalities
\[
    2x-1 <y <2x, \quad 2-4x <y < 3-4x \quad \text{and} \quad \t_k < x < 1-\t_k
\]
We define $P_\infty = \lim_{k \to \infty} P_k$.
See Figure \ref{fig:P-infty}.

\begin{figure}[t]
\centering
\centering \unitlength=1.3mm
    \begin{picture}(50,53)(0,0)
            \thinlines
  \path(0,0)(0,50)(50,50)(50,0)(0,0)
  \dottedline(20.6,17.6)(20.6,0)
  \dottedline(20.6,41.2)(0,41.2)
    \dottedline(20.6,16.7)(0,16.7)
  \dottedline(20.6,41.2)(0,41.2)
    \dottedline(29.4,32.4)(50,32.4)
      \dottedline(29.4,8.8)(50,8.8)
      \dottedline(29.4,8.8)(29.4,0)

    \shade\path(20.6,41.2)(20.6,17.6)(25,0)(29.4,8.8)(29.4,32.4)(25,50)(20.6,41.2)
    \put(-1,-3){$0$}
    \put(-9,16){$2-4\t$}
    \put(49,-3){$1$}
    \put(-2,49.5){$1$}
    \put(20,-3.5){$\t$}
    \put(27,-3.5){$1-\t$}
    \put(-3.3,40){$2\t$}
        \put(51,32){$4\t-1$}
            \put(51,8){$1-2\t$}
          \end{picture}

          \bigskip

\caption{The hexagon $P_\infty$.}
\label{fig:P-infty}
\end{figure}
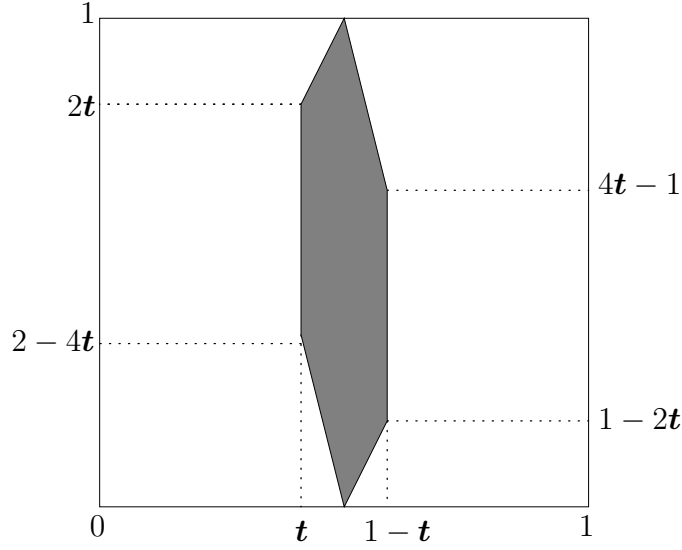

There are a number of goals to this section.
\begin{itemize}
\item Show that for each $k \geq 1$ that there are only a finite number of cycles that avoid
$\overline{P_k}$, and moreover that $P_1$ is a cycle trap.
\item Show that for each $k \geq 1$ that $P_k$ is a dimension trap.
\item Show that $P_\infty$ is a dimension trap.
\end{itemize}

\begin{lemma}\label{lem:P}
No element of $\J(P_k)$ can contain the factors
\[
\begin{array}{llll}
(a) & 1 0^\ell 1^n 0, 0 1^\ell 0^n 1 \hspace{0.5 in}& n > \ell \geq 2
    \hspace{0.5 in} & P_k, k \geq 1 \\
(b) & 1 0^\ell 1 0^n 1, 0 1^\ell 0 1^n 0 \hspace{0.5 in}& n > \ell \geq 1 \hspace{0.5 in} & P_1 \\
(c) & 1 0^\ell 1 0^n 1, 0 1^\ell 0 1^n 0 \hspace{0.5 in}& n > \ell \geq 1, n \geq 3
    \hspace{0.5 in} & P_k, k \geq 2 \\
(d) & 1^n 0^n 1^n 0 1, 0^n 1^n 0^n 1 0 \hspace{0.5 in} & n \geq 2 \hspace {0.5 in} & P_1, P_2  \\
(e) & 1^n 0^n 1^n 0 1, 0^n 1^n 0^n 1 0 \hspace{0.5 in} & n \geq 3 \hspace {0.5 in} & P_k, k \geq 3 \\
(f) & 1^n 0^n 1 0^n 1 0, 0^n 1^n 0 1^n 0 1 \hspace{0.5 in} & n \geq 2 \hspace{0.5 in} & P_1, P_2 \\
(g) & 1^n 0^n 1 0^n 1 0, 0^n 1^n 0 1^n 0 1 \hspace{0.5 in} & n \geq 3 \hspace{0.5 in} & P_k, k \geq 3
\end{array}
\]
\end{lemma}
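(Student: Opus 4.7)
The strategy is as follows. The involution $0\leftrightarrow 1$ on bi-infinite sequences corresponds, under $\pi$, to $(x,y)\mapsto(1-x,1-y)$, and a direct check shows each $P_k$ is preserved; this takes care of one factor in each row, so only the first needs separate treatment. For a given factor $w$, it suffices to exhibit a placement of $w$ in a bi-infinite sequence such that the resulting cylinder (with the bits of $w$ pinned and all other bits free) maps into $P_k$: by $\sigma$-invariance of $\J(P_k)$, any sequence containing $w$ will then have some shift landing in $P_k$. Using the symbolic translation sketched just after the definition of $P_1$, membership $(x,y)\in P_1$ is equivalent to $x_1x_2\in\{01,10\}$ together with two lexicographic comparisons of $L:=x_0x_{-1}x_{-2}\ldots$ against $x_2x_3\ldots$ and against $\overline{x_3x_4\ldots}$; membership in $P_k$ for $k\ge 2$ requires in addition the numerical bound $\t_k<x<1-\t_k$.

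The alignment in each row is dictated by the shape of the factor: place a specific interior ``01'' or ``10'' transition at $x_1x_2$. For (a), the ``01'' between $0^\ell$ and $1^n$ of $10^\ell1^n0$, so $L$ begins $0^{\ell-1}1$ and $\overline{x_3x_4\ldots}$ begins $0^{n-1}1$; the comparison $L\succ\overline{x_3x_4\ldots}$ is decided at position $\ell-1$ and succeeds because $\ell<n$. For (b)/(c), the ``10'' between the middle $1$ and $0^n$ of $10^\ell10^n1$, so $L$ begins $0^\ell1$ and $x_2x_3\ldots$ begins $0^n1$, again decided by $\ell<n$. For (d)/(e), align at the ``01'' between $0^n$ and the second $1^n$ of $1^n0^n1^n01$: $L$ begins $0^{n-1}1^n$ and the corresponding right-hand comparison begins $0^{n-1}10$, so the comparison is settled inside the pinned $1^n$-block. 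For (f)/(g) align at the ``10'' between the middle $1$ and the second $0^n$ of $1^n0^n10^n10$, and the analogous bookkeeping works. In every case the two $P_1$-comparisons are forced entirely by the pinned bits of $w$.

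Once the $P_1$-inequalities are in place, the cylinder's $x$-interval takes the form $[\tfrac12\pm c_12^{-n-c},\,\tfrac12\pm c_22^{-n-c}]$ for small explicit integers, so the condition $\t_k<x<1-\t_k$ reduces to a single inequality of the shape $c'\cdot 2^{-n-c}<\tfrac12-\t_k$. Plugging in $\t_1=1/3$, $\t_2=2/5$, $\t_3=7/17$, with $\t_k\nearrow\t\approx 0.4124$, one finds the announced dichotomy: for (b),(d),(f) the value $n\ge 2$ already suffices in $P_1,P_2$, while $n\ge 3$ becomes necessary as soon as $k\ge 3$, giving (c),(e),(g); and in (a) the hypothesis $n>\ell\ge 2$ already forces $n\ge 3$, which handles all $k\ge 1$. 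The main obstacle is not any individual calculation but choosing the right alignment: a naive choice usually leaves too much of $L$ unpinned, so one of the lexicographic comparisons is decided by the free tail rather than by the pinned block. The correct recipe, uniformly, is to pick the alignment that maximises the pinned prefix of $L$; after that the proof reduces to a finite, routine check of binary inequalities, with the only delicate point being the comparison of $\tfrac12-\t_k$ against a power of $2$ that explains the cutoff $n\ge 3$ at $k=3$.
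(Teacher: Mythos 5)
Your proposal follows essentially the same route as the paper: for each forbidden factor, you pick a cylinder alignment that pins the crucial parts of $L=x_0x_{-1}\ldots$ and $x_2x_3\ldots$, reduce the $P_1$-membership to the two lexicographic comparisons, and then handle $P_k$ by the extra constraint $\t_k<x<1-\t_k$; the other factor in each pair is obtained by the bit-flip. The alignments you choose (the ``01'' between $0^\ell$ and $1^n$ for (a), the middle ``10'' for (b)/(c), etc.) are exactly the ones the paper's cylinders encode, and your identification of the bit-flip involution with $(x,y)\mapsto(1-x,1-y)$ is in fact more careful than the paper's wording, which misstates it as $(x,y)\mapsto(1-y,1-x)$.

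There is, however, one concrete error in your bookkeeping for the cutoffs. You lump (b)/(c) with (d)/(e) and (f)/(g) and claim that $n\ge 2$ suffices for $P_1$ \emph{and} $P_2$, with $n\ge3$ becoming necessary only at $k\ge3$. That matches (d)--(g) but not (b)/(c): the lemma requires $n\ge3$ already for $P_2$, and the numbers bear this out. With your alignment for $1\,0^\ell\,1\,0^n\,1$ at $\ell=1$, $n=2$, the cylinder's $x$-interval is $[9/16,5/8]$, and $1-\t_2=3/5=0.6<5/8$, so the cylinder escapes $P_2$ through the $x<1-\t_2$ side. So for (b)/(c) the dichotomy sits at $k=2$, not $k=3$. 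Beyond that, your ``analogous bookkeeping works'' for (d)--(g) is asserted rather than carried out, and both your proposal and the paper gloss over a measure-zero boundary subtlety (when $n=\ell+1$ in (a), extending the factor by $0^\infty$ places the corresponding point exactly on $\partial P_k$, so the lexicographic strict inequality does not translate to a strict numerical one); this does not affect the applications to cycles and dimension, but it means the cylinder-containment claims should really read $\subseteq\overline{P_k}$ with the open containment holding off an explicit countable exceptional set.
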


\begin{proof}
We observe that the second word for each case follows by the symmetric $(x, y) \to (1-y, 1-x)$, so
    we will prove the first restriction only.

{\bf Case 1: Proof of (a):} \\
We observe that if $n > \ell \geq 2$ then
    $[1 0^{\ell-1} \cdot 0 1^{n} 0] \subseteq P_k$ for all $k$, from
    which the desired result follows.

{\bf Case 2: Proof of (b) and (c):} \\
We observe that if $n > \ell \geq 1$ then
    $[1 0^{\ell-1} \cdot 0 1^{n} 0] \subseteq P_1$.
Further if $n \geq 3$ then
    $[1 0^{\ell-1} \cdot 0 1^{n} 0] \subseteq P_k$ for all $k$.

{\bf Case 3: Proof of (d) and (e):} \\
We observe that if $n \geq 2$ then
$[1^n 0^{n-1} \cdot  0 1^n 0 1] \subseteq P_2 \subseteq P_1$.

If in addition, $n \geq 3$ we have that
$[1^n 0^{n-1} \cdot  0 1^n 0 1] \subseteq P_k$ for all $k \geq 3$.

{\bf Case 4: Proof of (f) and (g):} \\
We observe that if $n \geq 2$ then
    $[1^n 0^n \cdot 1 0^n 1 0] \subseteq P_2 \subseteq P_1$.

If in addition, $n \geq 3$ we have that
    $[1^n 0^n \cdot 1 0^n 1 0] \subseteq P_k$ for all $k \geq 3$.
\end{proof}

\begin{cor} {\ }
\label{cor:P_k}
\begin{enumerate}
\item
The only cycles of the baker's map which do not intersect $P_1$ are
    of the form $(10^n)^\infty, (01^n)^\infty$, $(1^n0^n)^\infty$ or
    $(0^n 1 0^n 1^n 0 1^n)^\infty$.
    All of these intersect $\overline{P_1}$.
    Hence $P_1$ is a cycle trap.
\item
The only cycles of the baker's map which do not intersect $P_2, P_3, \dots$ are
    of the form $(10^n)^\infty, (01^n)^\infty$, $(1^n0^n)^\infty$ or
    $(0^n 1 0^n 1^n 0 1^n)^\infty$
    or are contained in the Cantor set $\{01, 001, 011, 0011\}^*$.
    Cycles of the form $(10^n)^\infty, (01^n)^\infty$, $(1^n0^n)^\infty$ or
    $(0^n 1 0^n 1^n 0 1^n)^\infty$, with $n \geq 2$ intersect $\overline{P_k}$ for
    $k \geq 2$.
\end{enumerate}
\end{cor}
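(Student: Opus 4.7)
I will classify all cycles whose bi-infinite $0$-$1$ expansion avoids every forbidden factor from Lemma~\ref{lem:P}, and then verify that each such cycle meets $\overline{P_1}$ (or $\overline{P_k}$). Since the forbidden factors come in complementary pairs under $0\leftrightarrow 1$, I may assume by symmetry that the maximum 0-run length $M$ in the cyclic word is at least the maximum 1-run length $L$.

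For Part~(1), I proceed by case analysis on $(M,L)$. When $M=L=1$ the cycle is $(01)^\infty$. When $M\ge 2$ and $L=1$, every 1-run is a singleton; Lemma~\ref{lem:P}(b) applied to each consecutive pair of 0-runs (separated by an isolated~1) forces them to be non-increasing around the cycle, and periodicity then makes them all equal to $M$, giving $(10^M)^\infty$. When $M\ge L\ge 2$ with all runs of length $M$, factor~(a) yields $L=M$ and the cycle is $(1^M 0^M)^\infty$. In the remaining case, $M\ge L\ge 2$ with non-constant run lengths, I combine (a) and (b) with the more delicate (d) and (f) to show that the run-length sequence cyclically reads $\ldots M,1,M,M,1,M,\ldots$, yielding $(0^M 1 0^M 1^M 0 1^M)^\infty$. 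To conclude that $P_1$ is a cycle trap, I exhibit for each family a boundary orbit point: e.g.\ $\pi(\ldots 10^n\cdot 10^n\ldots)=\bigl(\tfrac{2^n}{2^{n+1}-1},\tfrac{1}{2^{n+1}-1}\bigr)$ lies on the edge $y=2x-1$ of $\overline{P_1}$, and analogous direct computations handle the other three families.

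For Part~(2), conditions (c), (e), (g) of Lemma~\ref{lem:P} only forbid the patterns in (b), (d), (f) when $n\ge 3$, so cycles whose 0- and 1-runs all have length at most $2$ escape these constraints. Repeating the case analysis with this relaxation shows that cycles with some run of length $\ge 3$ must still lie in one of the four families from Part~(1), while cycles with all runs of length $\le 2$ are precisely the cyclic concatenations in $\{01,001,011,0011\}^*$ (since these four blocks cover all pairs $(a,b)$ of consecutive 0- and 1-run lengths with $a,b\in\{1,2\}$). For the families from Part~(1) with $n\ge 2$, the boundary orbit points exhibited above also lie on $\overline{P_k}$ for every $k\ge 2$, a direct verification.

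The main obstacle will be the sub-case $M\ge L\ge 2$ with non-constant run lengths in Part~(1). Factors (a) and (b) alone leave many possible run-length sequences, so pinning the configuration down to the symmetric pattern $\ldots M,1,M,M,1,M,\ldots$ requires invoking (d) and (f), which are tailored to eliminate near-miss configurations (such as an extra singleton run, or an intermediate-length run between two runs of length $M$). Assembling these constraints into the precise classification will be the main technical effort.
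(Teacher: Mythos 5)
Your proposal takes essentially the same route as the paper: classify cycles by which forbidden factors from Lemma~\ref{lem:P} they must avoid, and then check that each surviving family meets the closure of the hole. The difference is largely cosmetic --- the paper anchors the induction on $a_1=\min\{a_i:a_i\neq1\}$ and shows every $a_i$ is $1$ or $a_1$, whereas you anchor on the maximum run lengths $M,L$; both work because (a) and (b) force run lengths around the cycle to be non-increasing (through singletons or directly), which for a cyclic sequence collapses to all-equal. Two minor issues: your Case~3, ``$M\ge L\ge2$ with all runs of length $M$'', already trivially implies $L=M$; what you presumably mean is ``all $0$-runs of length $M$, all $1$-runs of length $L$'', and then (a) applied in both directions is what yields $M=L$. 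More importantly, you flag but do not resolve the genuinely hard sub-case --- forcing the run pattern $\ldots,M,1,M,M,1,M,\ldots$ --- which is also where the paper does the real work: it first rules out two consecutive singleton runs via (b)/(c), then three consecutive $a_1$ runs via (d)/(e), and finally shows the triple $a_1,a_1,1$ must repeat via (f)/(g). Until that sub-case is carried out, the classification is a plausible sketch rather than a proof, and the same caveat applies to your Part~(2) remark that ``repeating the case analysis with this relaxation shows'' cycles with a long run fall into the four listed families. The boundary-point verification you give for $(10^n)^\infty$ (landing on $y=2x-1$) is correct and representative.
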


\begin{proof}
Consider a cycle of the form $(1^{a_1} 0^{a_2} 1^{a_3} \ldots 0^{a_k})^\infty$ with $k$ even.
We will label this as $a_1 a_2 \ldots a_k$ for convenience.
If all $a_i = 1$, then we are done.
Assume without loss of generality that $a_1 = \min_{a_i \neq 1} a_i$.

By Lemma~\ref{lem:P}~(a), $a_2 = 1$ or $1 < a_2 \leq a_1$.
Since $a_1$ is chosen minimally, we have $a_2 = 1$ or $a_2 = a_1$.
In the case $a_2 = a_1$ we proceed to show that $a_3 = 1$ or $a_3 = a_1$.
If instead we have $a_2 = 1$ then by Lemma~\ref{lem:P}~(b) and (c) we have that
    $a_3 \leq a_1$.
Again, by minimality we have $a_3 = a_1$ or $a_3 = 1$.
If both $a_2 = a_3 = 1$ and the cycle does not intersect $P_1$, then by Lemma~\ref{lem:P}~(b)
    we have $a_2 = a_3 = \dots = a_k = 1$.
By rotating this cycle, we get a contradiction with Lemma~\ref{lem:P}~(b), whence $a_3 = a_1$.

If instead both $a_2 = a_3 = 1$ and the cycle does not intersect $P_k$ for some $k \geq 2$
    then by Lemma \ref{lem:P} (c) we have either that have $a_2 = a_3 = \dots = a_k = 1$,
    or that there exists some $i$ such that $a_i = 2$.
By the minimality of $a_1$ we see then that this cycle is contained in $\{01, 001, 011, 0011\}^*$.

By continuing in this fashion, we see that for all $i$ that $a_i = 1$ or $a_i = a_1$.
In the case of $P_2, P_3, \dots$ we will assume that $a_1 \geq 3$,
    else all $a_i \in \{1, 2\}$ as required.

It follows from Lemma~\ref{lem:P}~(b) and (c) that if for some $i$ we have $a_i = a_{i+1} = 1$, then
    $a_i = 1$ for all $i$. In other words, we cannot have two consecutive $1$s.

Lemma~\ref{lem:P}~(d) and (e) implies that if we have three consecutive $a_1$ in the sequence,
    then $a_i = a_1$ for all $i$.

Finally, if $a_i = a_{i+1} = a_1$ and $a_{i+2} = 1$ then $a_{i+3} = a_1$ (as we do not
    have two consecutive $1$s).
Furthermore, we then get that $a_{i+4} = a_1$ by Lemma~\ref{lem:P}~(f) and (g).
Repeating this argument gives us the cycle $(0^{a_1} 1^{a_1} 0 1^{a_1} 0^{a_1} 1)^\infty$.

We see for $n \geq 2$ that all of these cycles intersect $\overline{P_k}$.
Finally, we observe that the cycle $(01)^\infty$ intersects $\overline{P_1}$.
\end{proof}

\begin{prop}
\label{prop:P_1 dim}
The set $P_1$ is a dimension trap.
\end{prop}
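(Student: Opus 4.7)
By Lemma~\ref{lem:entropy-dim} it suffices to prove that the subshift $Y\subset\{0,1\}^{\BbZ}$ consisting of all sequences whose $\sigma$-orbit avoids every cylinder contained in $P_1$ has topological entropy zero. My plan is to bound the word complexity $p_Y(n)$ subexponentially by a careful run-length analysis, so that $h(Y)=0$ follows automatically.

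The starting point is Lemma~\ref{lem:P}(a), (b) rewritten in run-length form. If $\ldots a_{-1}a_0 a_1\ldots$ are the successive run lengths of a sequence in $Y$, the two families of forbidden factors give the implications $a_i\geq 2 \Rightarrow a_{i+1}\leq a_i$ and $a_{i+1}=1 \Rightarrow a_{i+2}\leq a_i$; a direct case split on whether $a_{i+1}=1$ or $a_{i+1}\geq 2$ then shows that $M_i:=\max(a_i,a_{i+1})$ is non-increasing in $i$. Given a factor $w\in Y$ of length $n$, decompose its run sequence into maximal plateaus on which $M_i$ is constant, with distinct plateau values $k_1>k_2>\cdots>k_r\geq 1$ and plateau lengths $L_1,\ldots,L_r$. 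Combining $a_i\leq M_i=k_j$ with $a_i\geq 2\Rightarrow a_{i+1}\leq a_i$ forces $a_i\in\{1,k_j\}$ inside the $j$-th plateau, with the $1$'s isolated. Each plateau contains at least one run of length $k_j$, so the distinct positive integers $k_1,\ldots,k_r$ sum to at most $n$, forcing $r=O(\sqrt{n})$.

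Inside one plateau of length $L$ and value $k\geq 2$, Lemma~\ref{lem:P}(d) forces three consecutive $k$'s to propagate forever within the plateau, while Lemma~\ref{lem:P}(f) forces any single $kk$-block occurring between two $1$'s to propagate as well (no later inter-$1$ block may have length $1$). Hence the $\{1,k\}$-pattern has the rigid shape $k^{s_0}(1k)^{u}(1k^{2})^{v}k^{s_1}$, possibly truncated at the edges, and an elementary parameter count bounds the admissible patterns by $CL^3$. Collecting everything,
\[
p_Y(n)\ \leq\ \sum_{r\leq\sqrt{2n}}\binom{n}{r}\binom{n}{r}\Big(\prod_{j=1}^{r} C L_j^{3}\Big)\cdot O(n^{2}),
\]
where $\binom{n}{r}^{2}$ counts the plateau values and plateau lengths and the $O(n^{2})$ absorbs the starting symbol and the truncations at the two ends of $w$. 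By AM--GM, $\prod_{j} L_j^{3}\leq (n/r)^{3r}$, which for $r\leq\sqrt{2n}$ is at most $n^{O(\sqrt{n})}$; therefore $p_Y(n)\leq 2^{O(\sqrt{n}\log n)}$ and $h(Y)=0$. The main obstacle is the plateau analysis itself: extracting the rigid $k^{s_0}(1k)^u(1k^{2})^v k^{s_1}$ description from Lemma~\ref{lem:P}(d), (f), with extra care at plateau boundaries where a short argument, parallel to the one used in the proof of Corollary~\ref{cor:P_k}, forces the shared boundary value to equal $1$.
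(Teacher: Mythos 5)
Your argument is correct and uses the same entry point as the paper --- reduce to zero topological entropy via Lemma~\ref{lem:entropy-dim} and exploit the forbidden factors of Lemma~\ref{lem:P} --- but the combinatorial bookkeeping is genuinely different and more explicit. The paper lists five admissible forms for the bi-infinite sequences in $\J(P_1)$ and then simply points to the Hardy--Ramanujan partition estimate from the proof of Theorem~\ref{thm:Gamma-dimtrap}; you instead introduce $M_i=\max(a_i,a_{i+1})$, prove it is non-increasing, cut a window of length $n$ into $O(\sqrt n)$ plateaus of constant $M$-value, and control the $\{1,k\}$-pattern inside a plateau with a small automaton fed by Lemma~\ref{lem:P}(d),(f). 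This has the merit of making explicit exactly where (d) and (f) are used, which the paper's terse classification leaves implicit, and it yields the same $2^{O(\sqrt n\log n)}$ bound. Two small inaccuracies, neither of which harms the conclusion: the family $k^{s_0}(1k)^u(1k^{2})^{v}k^{s_1}$ is too generous --- for $s_0=2$ and $u\geq1$ it already contains the forbidden factor $kk1k1$; the cleaner description, coming from banning $11$, $kkk1$, $kk1k1$, is a three-phase pattern (an alternating $(1k)$-phase, a $(kk1)$-phase, a terminal $k$-run) with irreversible transitions, which gives $O(L^{2})$ patterns of length $L$, so your $CL^{3}$ over-count certainly suffices. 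Also, the run length shared between two consecutive plateaus is not forced to be $1$; it may equal the next plateau value $k_{j+1}$, but this affects neither the plateau count nor the $n^{O(\sqrt n)}$ total.
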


\begin{proof}
Any sequence in $\J(P_1)$ is of one of the forms
\begin{align*}
\ldots 1^{a_i} 0^{a_{i+1}} 1^{a_{i+2}} \ldots 0^{a_n} 1 0^{b_1} 1 0^{b_2} 1 0^{b_3} \ldots  \\
\ldots 1^{a_i} 0^{a_{i+1}} 1^{a_{i+2}} \ldots 1^{a_n} 0 1^{b_1} 0 1^{b_2} 0 1^{b_3} \ldots  \\
\ldots 1^{a_i} 0^{a_{i+1}} 1^{a_{i+2}} 0^{a_{i+3}} \ldots \\
\ldots 0 1^{b_1} 0 1^{b_2} 0 1^{b_3} \ldots  \\
\ldots 1 0^{b_1} 1 0^{b_2} 1 0^{b_3} \ldots
\end{align*}
where $a_i \leq a_{i+1}$ and $b_i \leq b_{i+1}$  for all $i$ and $a_n \leq b_1$.

This shows that these are eventually periodic in the left direction.
Further, using a similar argument to that in Theorem~\ref{thm:Gamma-dimtrap} and
    Lemma~\ref{lem:entropy-dim} we conclude that the dimension of this set is zero.
\end{proof}

\begin{prop}
\label{prop:P2}
The only cycle in  $\J(\overline{P_2})$ is $\left\{\left(\frac13, \frac23\right), \left(\frac23, \frac13\right)\right\}$.
\end{prop}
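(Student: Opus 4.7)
Our plan is to apply Corollary~\ref{cor:P_k}(2) to narrow the candidates and then check each remaining case. Since $\J(\overline{P_2})\subset\J(P_2)$, every cycle in $\J(\overline{P_2})$ lies in one of the four parametric families $(10^n)^\infty$, $(01^n)^\infty$, $(1^n0^n)^\infty$, $(0^n10^n1^n01^n)^\infty$ listed in that corollary, or else is contained in the Cantor set $\{01,001,011,0011\}^*$. The corollary also states that for $n\ge 2$ each parametric cycle already intersects $\overline{P_k}$ for $k\ge 2$; and for $n=1$ all four families reduce to the 2-cycle $(01)^\infty$, whose two points $(1/3,2/3)$ and $(2/3,1/3)$ lie outside $\overline{P_2}$ because $1/3<\t_2=2/5$ and $2/3>1-\t_2=3/5$. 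This 2-cycle therefore lies in $\J(\overline{P_2})$, giving the cycle in the conclusion.

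It remains to show that every cycle in the Cantor set other than $(01)^\infty$ meets $\overline{P_2}$. Such a cycle must contain at least one block from $\{001,011,0011\}$, and the symmetry $(x,y)\mapsto(1-y,1-x)$, which preserves $\overline{P_2}$ and acts on bi-infinite sequences by $s_n\mapsto 1-s_{1-n}$ (so as to exchange $001\leftrightarrow 011$ and fix $01$ and $0011$), lets us assume a $001$ or $0011$ block is present. Whenever a $0011$ block is present, the shift with $s_1 s_2 s_3 s_4=0110$ --- the three interior characters of this block followed by the leading $0$ of the next block --- yields a point with $x\in[3/8,7/16]$; this shift lies in the strip $\t_2\le x\le 1-\t_2$ when the block following $0011$ is again $0011$ (giving, in particular, $(2/5,2/5)\in\partial\overline{P_2}$ for $(0011)^\infty$); if not, we use instead the companion shift with $s_1\ldots s_5=10010$ coming from the last character of a $0011$ block followed by a subsequent $001$ block, which gives $x\in[9/16,19/32]\subset[2/5,3/5]$. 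When only a $001$ block is present, the $001$ block is necessarily flanked by a block ending in $1$ and a block starting with $0$, so the cycle contains $10010$ as a substring, and the same shift $s_1\ldots s_5=10010$ is used.

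At each of these shifts, the inequalities $y\le 2x$ and $2-4x\le y$ defining $\overline{P_2}$ hold trivially (since $2x>1$ and $2-4x<0$ in the relevant range), and the only remaining conditions are $y\ge 2x-1$ and $y\le 3-4x$; a finite case analysis on the block ending at position $s_1$ (one of $01,001,011,0011$) establishes both. Often $(x,y)$ lands precisely on one of the boundary segments $y=2x-1$ or $y=3-4x$ of $\overline{P_2}$; for example $(4/7,1/7)$ for $(001)^\infty$ lies on $y=2x-1$, while $(37/63,41/63)$ for $(011001)^\infty$ lies on $y=3-4x$. The main obstacle is this case analysis, which must track the interaction between the preceding and following blocks of the chosen shift: the argument is finite and routine but has several subcases.
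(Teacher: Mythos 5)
Your reduction via Corollary~\ref{cor:P_k}(2) is correct, and the symmetry observation (which turns the problem into showing every nontrivial cycle in $\{01,001,011,0011\}^*$ with a $001$ or $0011$ block meets $\overline{P_2}$) matches the spirit of the paper, though the paper then argues via the forbidden $\{1,2\}$-alphabet factors of Lemma~\ref{lem:4.5}. However, your treatment of the Cantor-set cycles has a genuine gap.

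Your case split on the block \emph{immediately following} $0011$ is not correct in either direction. For $(0011\,0011\,001)^\infty$, the block following your chosen $0011$ is again $0011$, yet the shift with $s_1s_2s_3s_4=0110$ gives $x=0.0110011001\underline{0}\ldots<\t_2=0.0110011001\underline{1}\ldots$, so it is \emph{not} in the strip, contrary to your case (1) claim. Conversely, for $(0011\,01)^\infty$ or $(0011\,011)^\infty$ the block following $0011$ is not $0011$, but case (2) is also unavailable because there is no $001$ block anywhere in the cycle: your two cases simply do not cover ``$0011$ followed by $01$ or $011$.'' (Ironically, in these examples the first shift \emph{does} land in the strip — e.g.\ $(26/63,22/63)\in\partial\overline{P_2}$ for $(0011\,01)^\infty$ — but this is not what your stated criterion says. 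The correct dichotomy is: if some $0011$ is immediately followed by $01$ or $011$, the shift $0110$ lands with $x>\t_2$; if some $0011$ is immediately followed by $001$, the shift $10010$ works; and if every $0011$ is followed by $0011$ the cycle is $(0011)^\infty$.) Finally, the claim that ``$y\le 2x$ and $2-4x\le y$ hold trivially since $2x>1$ and $2-4x<0$'' is only true for the second shift, where $x\in[9/16,19/32]\subset(\tfrac12,1)$; for the first shift $x\in[3/8,7/16]\subset(0,\tfrac12)$ you have $2x<1$ and $2-4x>0$, so those two inequalities are precisely the ones that must be checked, not the two you retained. As written, the argument does not establish the proposition.
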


\begin{lemma}
\label{lem:4.5}
Let $\mathsf{B}$ be the set of sequences that are eventually periodic to the left and are
    eventually of the form $0 1^{a_1} 0 1^{a_2} 0 \ldots$, $1 0^{a_1} 1 0^{a_2} 1 0^{a_3} \ldots$
    or $0^{a_1} 1^{a_2} 0^{a_3} 1^{a_3} \ldots$ with $a_1 \leq a_2 \leq a_3 \leq \dots$ in the right
    direction.
Consider a sequence $a_i \in \{1, 2\}$ corresponding to $0^{a_1} 1^{a_2} 0^{a_3} 1^{a_4} \ldots$.
No element of $\J (P_2) \setminus \mathsf{B}$ written as $\ldots a_{-1} a_0 a_1 a_2 \ldots$
with $a_i \in \{1, 2\}$ may contain a factor
\[
\begin{array}{lll}
(a) & 1 1 2 1 \hspace{1 in}\\
(b) & 2 2 2 1 \hspace{1 in}\\
(c) & 2 2 1 2 1\hspace{1 in} \\
(d) & 1^n 2 2 1^m 2 \hspace{1 in} & n > m,\ m - \text{odd} \\
(e) & 2 1^n 22 1^m \hspace{1 in} & 2 \neq n < m, n-\text{even} \\
\end{array}
\]
\end{lemma}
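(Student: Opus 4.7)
The plan is to translate each forbidden $a$-sequence factor into its corresponding binary substring (for one choice of parity; the other parity is obtained via the bit-flip symmetry $(x,y)\mapsto(1-x,1-y)$, which preserves $P_2$) and, for each substring, to exhibit a sub-cylinder contained in $P_2$. Any such containment forces some shift of a sequence containing the factor to land in $P_2$, contradicting its membership in $\J(P_2)$.

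The direct cylinder argument disposes of items (a), (b), (c). The factor $1121$, read together with one symbol from each adjacent block, encodes the binary substring $1010010$; evaluating the defining inequalities $\frac{2}{5}<x<\frac{3}{5}$, $2x-1<y<2x$, $2-4x<y<3-4x$ at the corners of the $2^{-2}\times 2^{-5}$ rectangle $[10\cdot 10010]$ shows $[10\cdot 10010]\subset P_2$. For $2221$ the analogous encoding is $100110010$ and the cylinder $[1001\cdot 10010]$ sits in $P_2$; for $22121$ the encoding is $1001101101$ and the cylinder $[10011\cdot 01101]$ sits in $P_2$. In each case, flipping all bits of the cylinder yields the corresponding statement for the opposite parity.

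For items (d), (e) no single cylinder works uniformly: already in the smallest case of (d), namely $n=2$, $m=1$, factor $112212$, the natural candidate $[1010\cdot 0110110]$ only touches $\partial P_2$ at a corner rather than lying strictly inside. The remedy is a case analysis on the parities of $n$ and $m$ and on the adjacent $a$-symbols $a_{i-1}, a_{i+k}\in\{1,2\}$. For each admissible configuration one either (i) locates a sub-cylinder of the extended binary substring that lies strictly inside $P_2$, or (ii) observes that the extension creates a factor already forbidden by (a)--(c), or (iii) forces the right tail of the sequence into one of the shapes that put it inside $\mathsf{B}$. For instance, extending $112212$ on the right by $a_{i+6}=1$ produces $1122121$, which contains $22121$ and so is excluded by (c); repeated extension by $a_{i+k}=2$ eventually drives the tail into the period-four form $(1100)^\infty$, placing the sequence in $\mathsf{B}$. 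The hypotheses $n>m$ with $m$ odd (for (d)) and $n<m$ with $n$ even, $n\neq 2$ (for (e)) are calibrated precisely so that one of these three outcomes always materialises.

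The main obstacle is the case analysis for (d) and (e). Keeping track of the parities of $n$, $m$, and the starting block, together with the two possible values of each adjacent $a$-symbol, generates a moderate number of sub-cases, and in each one must check either that the selected cylinder lies strictly inside $P_2$, or that the extension creates a factor forbidden by (a)--(c), or that the sequence is driven into $\mathsf{B}$.
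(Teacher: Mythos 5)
Your choice of shifts for (a), (b), (c) matches the paper's exactly (the paper reads off the same binary words $\ldots 10\cdot 10010\ldots$, $\ldots 1001\cdot 10010\ldots$, $\ldots 10011\cdot 01101\ldots$), and your observation for (a) that the \emph{full} cylinder $[10\cdot 10010]$ lies in $P_2$ is correct and is a genuine simplification of the paper's argument, which instead intersects this cylinder with two sub-rectangles exploiting the tail structure. I verified: the closure of $\pi([10\cdot 10010])$ is $[18/32,19/32]\times[1/4,1/2]$, and all six inequalities hold strictly there.

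However, the analogous claims for (b) and (c) are not quite right. For (b), the closure of $\pi([1001\cdot 10010])$ is $[18/32,19/32]\times[9/16,5/8]$, and at the corner $(19/32,\,5/8)$ we get $y=3-4x$ exactly, so the cylinder touches $\partial P_2$ rather than sitting strictly inside the open set. Similarly for (c): the closure of $\pi([10011\cdot 01101])$ is $[13/32,7/16]\times[25/32,13/16]$, and the corner $(13/32,\,13/16)$ satisfies $y=2x$. These corners correspond to bi-infinite sequences with eventually constant right and left tails, hence in $\mathsf{B}$, so the argument is repairable by invoking the exclusion of $\mathsf{B}$ --- this is precisely why the paper phrases (b) and (c) with the added qualifier ``with the restrictions that we are not in $\mathsf{B}$'' and, in case (a), replaces the full cylinder by two sub-rectangles whose interiors avoid the boundary. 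But as written, your ``$\subset P_2$'' assertions for (b) and (c) are false, and the corner-case repair is not carried out.

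The larger gap is (d) and (e). You give a strategy (case analysis on parities and on the flanking $a$-symbols, discharging each branch either by a strict cylinder containment, by reduction to (a)--(c), or by forcing the tail into $\mathsf{B}$) but you never execute it; you signal this yourself by calling it ``the main obstacle.'' There are real subtleties here: for instance, ``repeated extension by $a_{i+k}=2$ drives the tail to $(1100)^\infty$'' only places the sequence in $\mathsf{B}$ if the \emph{left} tail is also eventually periodic, which is not automatic; one has to play the left and right extensions off one another. The paper sidesteps the open-ended case analysis by simply exhibiting, for each $(n,m)$, a single explicit shift (for (d): $\ldots (10)^{(m+3)/2}1\cdot 100(10)^{(m-1)/2}1001\ldots$, for (e): $\ldots 100(10)^{n/2}1\cdot 100(10)^{n+1}\ldots$) whose $\pi$-image can be checked to lie in $P_2$ once the $\mathsf{B}$-driven tail restrictions are taken into account. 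So while your outline is plausible, the proof of (d) and (e) is not actually present, and on the reduced problems (b), (c) you have an error you do not notice. The overall architecture is the same as the paper's; the execution is incomplete.
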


\begin{proof}
We note that the set $\mathsf{B}$ is of zero dimension, whence we can ignore this set
    without loss of generality.
As we are assuming the sequence is not in $\mathsf{B}$ and in $\J(P_2)$,
    we may assume that the sequence has left or right tail (or
    possibly both) of the form $\{011, 01, 001, 0011\}^*$.

{\bf Case 1: Proof of (a):} \\
This follows by noticing that that the rectangles
    $[.10010(010)^\infty,.10010(110)^\infty] \times [.01(0)^\infty,.01(1)^\infty]$ and
    $[.10010(0)^\infty,.10010(1)^\infty] \times [.01(001)^\infty,.01(101)^\infty]$ are
    contained within $P_2$.
The symmetry $(x,y) \leftrightarrow (1-x, 1-y)$ proves the equivalent statement swapping the
    roles of $0$ and $1$ in the above statement.

{\bf Case 2: Proof of (b):} \\
This is similar to case (a) by looking at the shift $\ldots 1001\cdot 10010 \ldots$ with the
    restrictions that we are not in $\mathsf{B}$.

{\bf Case 3: Proof of (c):} \\
This is similar to case (a) by looking at the shift $\ldots 10011\cdot 01101 \ldots$ with the
    restrictions that we are not in $\mathsf{B}$.

{\bf Case 4: Proof of (d):} \\
This is similar to case (a) by looking at the shift $\ldots (10)^{(m+3)/2} 1\cdot 1 0 0 (10)^{(m-1)/2}1001 \ldots$
    with the restrictions that we are not in $\mathsf{B}$.

{\bf Case 5: Proof of (e):} \\
This is similar to case (a) by looking at the shift $\ldots 100(10)^{n/2} 1\cdot 1 0 0 (10)^{n+1} \ldots$
    with the restrictions that we are not in $\mathsf{B}$.
\end{proof}

\begin{proof}[Proof of Proposition \ref{prop:P2}]
Consider a cycle $(0^{a_1} 1^{a_2} \dots 1^{a_k})^\infty$.
We can assume without loss of generality that all of the $a_i \leq 2$,
    for otherwise they are of one of the forms specified by Corollary~\ref{cor:P_k}.
    By Lemma~\ref{lem:4.5}~(b), we cannot have more than two 2s
   in a row, otherwise we have the cycle $(0011)^\infty$ which is of a form
    specified in Corollary~\ref{cor:P_k}.
By this and Lemma~\ref{lem:4.5}~(a), if we have more than one 1,
   then it must be followed by exactly two 2s.

If all $2$s are isolated (i.e., if we do not have an occurrence of
    two $2$s in a row), then by Lemma~\ref{lem:4.5}~(a), we can have at
    most one $1$, and hence we have a word of the form $(0 11)^\infty$ or
    $(100)^\infty$, which are cycles of an admissible form.
Hence we may assume that we have one occurrence of two $2$s in a row.
By Lemma~\ref{lem:4.5}~(a) and (c) we see that all occurrences of $2$
    must come in pairs.

Hence $a_1 a_2 a_3 \dots $ looks like $2 2 1^{k_1} 2 2 1^{k_2}
    \dots 2 2 1^{k_n}$ for some choice of $k_i$.
If $k_1= 2m > 0$ is even, then by Lemma~\ref{lem:4.5}~(e) we have $k_1 \geq k_2$.
Furthermore, by Lemma~\ref{lem:4.5}~(d), $k_2$ cannot be odd.
Continuing in this manner, we have that $k_1 \geq k_2 \geq \dots \geq k_n \geq k_1$,
where all $k_i$ are even. This is of the form $(00 11 (01)^m)^\infty$ or $(1100(10)^m)^\infty$.
A similar argument can be used if $k_1$ is odd to show that this
    must be of the form $(00 11 (01)^m 0 11 00 (10)^m 1)^\infty$.
Both of these points lie on the boundary of $P_2$.
\end{proof}

\begin{lemma}
\label{lem:levels}
Let $k \geq 2$ and $(x,y) \in P_{k-1} \setminus P_k$.
Assume that the binary expansion of $x$ does not contain $000$ or $111$.
If $(x,y) \in \J(P_k)$ then $B^{2^{k-1}}(x,y) \in P_{k-1} \setminus P_k$.
Furthermore $x = .\t_{k}^\infty$ or $x = .(\overline{\t_{k}})^\infty$.
\end{lemma}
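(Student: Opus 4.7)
The plan is to perform a detailed combinatorial analysis of the binary expansion of $x$, combining the constraint $(x,y)\in P_{k-1}\setminus P_k$ with the orbit-avoidance condition $(x,y)\in\J(P_k)$. By the central symmetry $(x,y)\mapsto(1-x,1-y)$, which sends each $P_m$ to itself and interchanges the two vertical strips comprising $P_{k-1}\setminus P_k$, I may assume $\t_{k-1}<x\le\t_k$. Denote the finite Thue--Morse prefix of length $2^m$ by $\tau_m:=t_0 t_1\ldots t_{2^m-1}$, so $\t_m=(\tau_m)^\infty$ and the basic substitution identity $\tau_m=\tau_{m-1}\overline{\tau_{m-1}}$ holds. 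Since both $\t_{k-1}$ and $\t_k$ begin with $\tau_{k-1}$, a digit-by-digit lex comparison forces the first $2^{k-1}$ binary digits of $x$ to equal $\tau_{k-1}$; the next $2^{k-1}$ digits, call them $w$, must then satisfy $\tau_{k-1}\prec w\preceq\overline{\tau_{k-1}}$.

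The crux of the proof is to show $w=\overline{\tau_{k-1}}$. For $k=2,3$ the no-$000$/no-$111$ hypothesis already settles this, because every length-$2^{k-1}$ binary word strictly between $\tau_{k-1}$ and $\overline{\tau_{k-1}}$ contains $000$ or $111$. For $k\ge 4$ one must genuinely invoke $(x,y)\in\J(P_k)$: given an admissible intermediate $w$, I would locate a shift $j\in\{1,\dots,2^k\}$, essentially at the position where $w$ first deviates from $\overline{\tau_{k-1}}$, such that the $x$-coordinate of $B^j(x,y)$ lies in the open band $(\t_k,1-\t_k)$ while the $y$-coordinate, read off from the bi-infinite sequence, satisfies both parallelogram inequalities $2x-1<y<2x$ and $2-4x<y<3-4x$ strictly. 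This places $B^j(x,y)$ inside $P_k$, contradicting the orbit-avoidance hypothesis. The estimates rely on the fact that the Thue--Morse prefix is precisely the lex-extremal word with the no-cube property, so any proper deviation produces enough lex freedom to land in the open hexagon.

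With $w=\overline{\tau_{k-1}}$ in hand, the first $2^k$ digits of $x$ equal $\tau_k$, and hence the $x$-coordinate of $(x',y'):=B^{2^{k-1}}(x,y)$ starts with $\overline{\tau_{k-1}}$, placing $x'\in[1-\t_k,1-\t_{k-1})$. A direct symbolic verification shows that the two parallelogram inequalities of $P_{k-1}$ continue to hold at $(x',y')$, so $B^{2^{k-1}}(x,y)\in P_{k-1}\setminus P_k$, giving the first assertion. Applying exactly the same argument to $(x',y')$, which now lies in the symmetric strip, forces the subsequent $2^{k-1}$-block of $x$ to be $\tau_{k-1}$ again. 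Iterating this process indefinitely, the $2^{k-1}$-blocks of $x$ alternate as $\tau_{k-1},\overline{\tau_{k-1}},\tau_{k-1},\overline{\tau_{k-1}},\ldots$, so $x=0.(\tau_{k-1}\overline{\tau_{k-1}})^\infty=\t_k$. The symmetric case from Step~1 analogously produces $x=1-\t_k$, which is $.(\overline{\t_k})^\infty$.

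The main obstacle is the decisive step in the regime $k\ge 4$, where the purely combinatorial forbidden-substring argument no longer suffices. There one must carry out a careful orbit-analysis, choosing the shift $j$ intelligently based on the first deviating position of $w$ and verifying the $y$-side parallelogram inequalities from the bi-infinite sequence using the no-$000$/no-$111$ hypothesis.
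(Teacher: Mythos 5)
Your proposal takes a genuinely different route from the paper and, more importantly, has an acknowledged gap at its core.

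The paper's proof avoids all digit-by-digit analysis. It places $(x,y)$ in the trapezium $\Pi_k$ with vertices $(x_{k-1},2x_{k-1}),(x_k,2x_k),(x_k,2-4x_k),(x_{k-1},2-4x_{k-1})$ (where $x_m=\t_m$), observes that $B^{2^{k-1}}$ is continuous on $\Pi_k$ because every point there shares the digit block $\tau_{k-1}$, computes the images of the four vertices explicitly, and notes that all four land in $P_{k-1}$. Convexity then gives $B^{2^{k-1}}(\Pi_k)\subset P_{k-1}$, so orbit-avoidance forces $B^{2^{k-1}}(x,y)\in P_{k-1}\setminus P_k$, and iterating yields the digit conclusion in one clean sweep. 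By contrast you try to determine the second $2^{k-1}$-block $w$ of $x$ directly by lexicographic reasoning.

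The genuine gap is exactly where you flag it: for $k\ge 4$ the claim $w=\overline{\tau_{k-1}}$ is only sketched, not proved. You propose to ``locate a shift $j$'' at which the orbit enters the open hexagon $P_k$, but you neither identify $j$, nor verify the two parallelogram inequalities $2x-1<y<2x$ and $2-4x<y<3-4x$ at that shift (which requires controlling the $y$-side, i.e.\ the reversed prefix $x_j\ldots x_1 y_1 y_2\ldots$), nor handle the possibility that the shifted orbit sits exactly on $\partial P_k$ (which is permitted by $\J(P_k)$ since $P_k$ is open, and does in fact occur for the extremal $x=\t_k$). This is the substantive step of the lemma, and your outline does not substitute for the paper's corner-tracking argument. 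A secondary inaccuracy: from $\t_{k-1}<x\le\t_k$ you only get $\tau_{k-1}\preceq w\preceq\overline{\tau_{k-1}}$ (not $\tau_{k-1}\prec w$); the case $w=\tau_{k-1}$ with the deviation occurring further along is not addressed by your argument but must be excluded in the iteration you describe.
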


\begin{proof}
Let $(x,y) \in P_{k-1} \setminus P_k$ and $(x,y) \in \J(P_k)$.
By symmetry we may assume that $x < 1/2$.
Thus we have
\[
2-4x < y < 2x \ \ .\t_{k-1}^\infty < x \leq .\t_k^\infty.
\]
By the second inequality, $x = 0.\t_{k-1}\ldots$; in particular, $(x,y)$ is contained in the polygon $\Pi_k$ with vertices
$(x_{k-1}, 2 x_{k-1}), (x_k, 2 x_k), (x_k, 2-4 x_k), (x_{k-1}, 2-4 x_{k-1})$
where $x_{k-1} = 0.\t_{k-1}^\infty$, and similarly for $x_k$. (We have that
$\Pi_k$ is a trapezium for $k\ge3$ and a triangle for $k=2$.)
Since $x = 0.\t_{k-1} \ldots$, we see that the first $2^{k-1}$ iterates of $B$ on $\Pi_k$ are continuous.
Furthermore,
\begin{align*}
B^{2^{k-1}}(x_{k-1}, y)  &=
\begin{cases}
(x_k, 0.\t_{k-1} + \frac{y}{2^{2^k}}) & \mathrm{if}\ k\ \mathrm{odd,} \\
(x_k, 0.\overline{\t_{k-1}} + \frac{y}{2^{2^k}}) & \mathrm{if}\ k\ \mathrm{even;}
\end{cases} \\
B^{2^{k-1}}(x_{k}, y)  &=
\begin{cases}
(1-x_k, 0.\t_{k-1} + \frac{y}{2^{2^k}}) & \mathrm{if}\ k\ \mathrm{odd,} \\
(1-x_k, 0.\overline{\t_{k-1}} + \frac{y}{2^{2^k}}) & \mathrm{if}\ k\ \mathrm{even,}
\end{cases}
\end{align*}
where $y$ can take any value in $\Pi_k$.
Hence, regardless if $k$ is even or odd, we see that the images of all four corners of
    the polygon are contained within $P_{k-1}$.
Hence, as we picked $(x,y) \in \J(P_{k})$ this proves that
    $(x,y) \in P_{k-1} \setminus P_k$.
This in turn proves that $x = 0.\t_k^\infty$ as required.
\end{proof}

\begin{cor}
There are a finite number of cycles in $\J(\overline{P_k})$.
\end{cor}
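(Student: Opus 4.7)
The plan is to classify cycles $C\subseteq\J(\overline{P_k})$ by the index $j^*(C):=\min\{j : C\subseteq\J(P_j)\}\in\{1,\dots,k\}$, which is well-defined because $\J(P_1)\subseteq\J(P_2)\subseteq\cdots\subseteq\J(P_k)$ and $\J(\overline{P_k})\subseteq\J(P_k)$. By Corollary \ref{cor:P_k}(2), every cycle in $\J(P_k)$ is either one of the four explicit families $(10^n)^\infty,\,(01^n)^\infty,\,(1^n0^n)^\infty,\,(0^n10^n1^n01^n)^\infty$ or lies in the Cantor set $\mathcal{C}:=\{01,001,011,0011\}^*$. The four families with $n\geq 2$ intersect $\overline{P_k}$ (same Corollary) and are thus excluded, while the $n=1$ subcase is a fixed finite list. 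So it suffices to bound the cycles in $\mathcal{C}\cap\J(\overline{P_k})$.

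Fix such a $C$. If $j^*(C)=1$, then $C\subseteq\J(P_1)$ and Corollary \ref{cor:P_k}(1) again confines $C$ to the four families, a finite list. If $j^*(C)\geq 2$, minimality of $j^*$ supplies a point $(x,y)\in C\cap (P_{j^*-1}\setminus P_{j^*})$. The Cantor-set hypothesis forbids $000$ and $111$ in the binary expansion of $x$, and $C\subseteq\J(P_{j^*})$ gives $(x,y)\in\J(P_{j^*})$, so Lemma \ref{lem:levels} (with $k$ replaced by $j^*$) applies and forces $x=0.\t_{j^*}^\infty$ or $x=0.\overline{\t_{j^*}}^\infty$. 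Since $\t_{j^*}$ has minimal period $2^{j^*}$, the bi-infinite periodic sequence with this forward part is unique, and so is the corresponding $B$-orbit; moreover the Thue--Morse identity $\t_j=\t_{j-1}\overline{\t_{j-1}}$ makes $\overline{\t_{j^*}}$ a cyclic shift of $\t_{j^*}$, so the two values of $x$ yield the same cycle. Hence each level $j^*\geq 2$ contributes at most one cycle, summing to an $O(k)$ bound on the number of cycles in $\J(\overline{P_k})$.

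The step I expect to be the most delicate is ensuring that the periodic bi-infinite extension from a forward part of minimal period $2^{j^*}$ is truly unique and that $\overline{\t_{j^*}}^\infty$ really lies in the shift-orbit of $\t_{j^*}^\infty$; these are classical facts about truncated Thue--Morse words, but they must be invoked carefully to avoid miscounting. Even if one does not check the cyclic-shift identification, the per-level bound only doubles and the finiteness conclusion is unaffected.
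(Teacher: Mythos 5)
Your proof is correct and takes essentially the same route as the paper: both classify cycles in $\J(\overline{P_k})$ by a level index tied to the nested family $\{P_j\}$ and invoke Lemma~\ref{lem:levels} to conclude there is at most one candidate cycle (up to the $\t_j \leftrightarrow \overline{\t_j}$ cyclic-shift identification) per level $\ge 2$, plus the finitely many exceptions from Corollary~\ref{cor:P_k}. Your version is somewhat more explicit than the paper's terse argument, in particular in spelling out the reduction to the Cantor set $\{01,001,011,0011\}^*$ via Corollary~\ref{cor:P_k}(2), which is precisely what makes the ``no $000$ or $111$'' hypothesis of Lemma~\ref{lem:levels} available.
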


\begin{proof}
Recall that $P_1$ is a cycle trap, whence $\J(\overline{P_1})$
does not contain any cycles.
If a cycle intersects some $\overline{P_{\ell-1}}$ but not $\overline{P_{\ell}}$ for
    $\ell \leq k$ we see that it is of the form $\bm t_{\ell-1}^\infty$.
Hence cycles in $\J (P_k)$ are
    of the form $\t_{\ell}^\infty$ for $2\le\ell \leq k-1$.
\end{proof}

\begin{rmk}Thus, the sequence $\{P_k\}_1^\infty$ is a route to chaos for $B$
which is completely analogous to $\{(\bm t_k^\infty, (\overline{\bm t_k})^\infty)\}_1^\infty$
for the doubling map -- see Section~\ref{sec:doubling}.
\end{rmk}

\begin{prop}
\label{prop:Pk}
The set $P_k$ is a dimension trap  for all $k$.
\end{prop}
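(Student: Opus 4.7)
The plan is induction on $k$; the base case $k=1$ is Proposition~\ref{prop:P_1 dim}. For the inductive step, assume $\dimh \J(P_{k-1})=0$. Since $P_k\subset P_{k-1}$, any point of $\J(P_k)\setminus \J(P_{k-1})$ has its $B$-orbit visiting the annulus $P_{k-1}\setminus P_k$ while avoiding $P_k$. Setting $E:=\J(P_k)\cap (P_{k-1}\setminus P_k)$, we have $\J(P_k)\setminus \J(P_{k-1}) = \bigcup_{n\in\BbZ} B^n(E)$. The baker's map is piecewise affine with finitely many pieces, hence bi-Lipschitz on each piece and dimension-preserving under every iterate, so a countable union of zero-dimensional sets being zero-dimensional, it suffices to show $\dimh E = 0$.

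Decompose $E = E_{\mathrm{reg}}\sqcup E_{\mathrm{irr}}$ according to whether the binary expansion of the $x$-coordinate avoids both the factors $000$ and $111$ or not. In the regular case, Lemma~\ref{lem:levels} forces $x\in\{0.\t_k^\infty,\,0.\overline{\t_k}^\infty\}$ and yields $B^{2^{k-1}}(x,y)\in P_{k-1}\setminus P_k$. The Thue--Morse word is overlap-free, so the hypothesis of the lemma is preserved by this iterate and the lemma may be re-applied at every multiple of $2^{k-1}$ in both directions along the orbit. This pins every length-$2^{k-1}$ block of the bi-infinite symbolic expansion of $(x,y)$ to be the Thue--Morse prefix $t_0 t_1\cdots t_{2^{k-1}-1}$ or its complement, in strict alternation, leaving only finitely many admissible bi-infinite sequences. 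Hence $E_{\mathrm{reg}}$ is finite.

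For the irregular case, a factor $000$ or $111$ at position $i$ of the $x$-expansion forces the iterate $B^{i-1}(x,y)$ to have $x$-coordinate in $[0,1/8)\cup(7/8,1]$ and therefore to lie outside $P_{k-1}$. The plan is to combine these forced exits with the forbidden-factor restrictions of Lemma~\ref{lem:P} and enumerate length-$n$ admissible subwords by a partition-type estimate modelled on the bound $\#\Omega_n \le (2n+1)n^2 p(2n)^2$ from Theorem~\ref{thm:Gamma-dimtrap} and Proposition~\ref{prop:P_1 dim}, whose sub-exponential growth together with Lemma~\ref{lem:entropy-dim} gives $\dimh E_{\mathrm{irr}} = 0$. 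The main obstacle is carrying out this enumeration: the forbidden-factor restrictions of Lemma~\ref{lem:P} weaken as $k$ increases, so the direct combinatorics of the $P_1$ case does not transfer. The rigidity that rescues the argument is supplied by Lemma~\ref{lem:levels}, which forces every re-entry of the orbit into $P_{k-1}\setminus P_k$ with an eventually regular $x$-tail to land in the already-finite $E_{\mathrm{reg}}$, so that the only remaining contribution is from persistent-irregularity tails controlled by the inductive hypothesis via their overlap with $\J(P_{k-1})$.
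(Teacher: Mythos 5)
Your overall strategy (induction on $k$, restricting to the ``annulus'' $E=\J(P_k)\cap(P_{k-1}\setminus P_k)$, and splitting by whether the $x$-expansion is regular or irregular) is quite different from the paper's, which works uniformly via a first-entry-into-$P_1$ argument combined with Proposition~\ref{prop:P_1 dim} and Lemma~\ref{lem:levels}. Unfortunately your version contains two genuine gaps.

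The first is in the claim that $E_{\mathrm{reg}}$ is \emph{finite}. Lemma~\ref{lem:levels} constrains only the $x$-coordinate --- equivalently, the right half of the bi-infinite symbolic sequence --- forcing it to be $\t_k^\infty$ or $\overline{\t_k}^\infty$ and forcing the \emph{forward} iterate $B^{2^{k-1}}(x,y)$ back into $P_{k-1}\setminus P_k$. The lemma gives no constraint on the left half (the $y$-coordinate), and you cannot ``re-apply it in both directions'': $B^{-2^{k-1}}(x,y)$ need not lie in $P_{k-1}\setminus P_k$, and $P_k$ is not invariant under the swap $(x,y)\leftrightarrow(y,x)$ (only under $(x,y)\leftrightarrow(1-x,1-y)$), so there is no backward/$y$ version of Lemma~\ref{lem:levels} to be had by symmetry. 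In fact $E_{\mathrm{reg}}$ is a subset of two vertical line segments, and controlling the $y$-coordinate is precisely where the work lies; the paper does this by noting that before the first visit of the orbit to $P_1$ the sequence is governed by the $P_1$ forbidden-word structure of Proposition~\ref{prop:P_1 dim}, whose $y$-projection has dimension zero. Your argument never touches that.

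The second gap is the ``rescue'' offered for $E_{\mathrm{irr}}$. You acknowledge that the forbidden-factor bounds weaken as $k$ grows, and then propose to control the ``persistent-irregularity tails ... via their overlap with $\J(P_{k-1})$''. But the whole point of $E$ is that its elements have an orbit that \emph{enters} $P_{k-1}$: by definition $E\subset\J(P_k)\setminus\J(P_{k-1})$, and these points are precisely those \emph{not} in $\J(P_{k-1})$. The inductive hypothesis $\dimh\J(P_{k-1})=0$ therefore says nothing about them, so the last sentence does not close the argument. What is missing is a direct combinatorial description of the admissible sequences in this regime --- exactly the content the paper extracts from Lemma~\ref{lem:levels} (the form $0.\t_{a_0}^{b_0}\t_{a_1}^{b_1}\cdots\t_{a_n}^\infty$ with $\ell\le a_0<\cdots<a_n\le k$) together with the $P_1$ description for the segment of orbit before the first visit to $P_1$.
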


\begin{proof}
Let $(x,y) \in \J(P_k)$.
We can assume without loss of generality that $(x,y) \not \in \J(P_1)$,
    as the set of all such points is of zero dimension.
This implies that there exists some shift of $(x,y)$ such that
    $(x',y') \in P_1$.
Assume without loss of generality that $x' < 1/2$.
Assume that $(x', y') \in P_{\ell-1} \setminus P_\ell$.
Then we see that
    $x' = 0.\t_{a_0}^{b_0} \t_{a_1}^{b_1} \dots
            \t_{a_{n-1}}^{b_{n-1}} \t_{a_n}^\infty$ where
    $\ell \leq a_0 < a_1 < a_2 \dots < a_n \leq k$.
The set of such $x'$ is clearly countable.

Either there is no first such occurrence of $(x', y')$ in which case
    the word is of the form
    \[ \t_{a_0}^{\infty} \t_{a_1}^{b_1} \dots
      \t_{a_{n-1}}^{b_{n-1}} \t_{a_n}^\infty\]
    or there is a first such occurrence.
If there is a first such occurrence, we see that the word is
    as described by Proposition \ref{prop:P_1 dim}, which has a projection
    on to the $y$-axis of dimension $0$.

Combining these gives that $P_k$ is a dimension trap.
\end{proof}

\begin{thm}
The set $P_\infty$ is a dimension trap.
\end{thm}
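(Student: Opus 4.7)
The plan is to exploit the monotone nesting $P_\infty\subset P_k$, which follows from $\t_k\nearrow\t$, together with Proposition~\ref{prop:Pk}. Since $\J(P_k)\subset\J(P_\infty)$ for every $k\ge 1$, I may decompose
\[
\J(P_\infty) \;=\; \bigcup_{k\ge1}\J(P_k)\;\cup\;R, \qquad R:=\J(P_\infty)\setminus\bigcup_{k\ge1}\J(P_k).
\]
By Proposition~\ref{prop:Pk}, $\dimh\J(P_k)=0$ for every $k$, so the first summand has Hausdorff dimension zero as a countable union of dimension-zero sets. It therefore suffices to show $\dimh R=0$.

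A point $(x,y)\in R$ has orbit avoiding $P_\infty$ yet entering every $P_k$; in particular, for each $k$ some iterate lies in $P_k\setminus P_\infty$. Following the strategy of the proof of Proposition~\ref{prop:Pk}, I would use shift-invariance and the symmetry $x\leftrightarrow 1-x$ to reduce to the case that some iterate $(x',y')$ lies in $P_1\setminus P_\infty$ with $x'<1/2$. Either (a) $(x',y')\in P_{\ell-1}\setminus P_\ell$ for some finite $\ell\ge 2$, in which case iterated application of Lemma~\ref{lem:levels} combined with the structural analysis from Proposition~\ref{prop:Pk} forces
\[
x' = 0.\t_{a_0}^{b_0}\t_{a_1}^{b_1}\dots\t_{a_{n-1}}^{b_{n-1}}\t_{a_n}^\infty, \qquad 2\le a_0<a_1<\dots<a_n;
\]
or (b) $(x',y')\in\bigcap_k P_k\setminus P_\infty$, which forces $x'\in\{\t,1-\t\}$, because the only defining inequality of $P_k$ that loses its strictness in the intersection is $\t_k<x<1-\t_k$. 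Case (b) is the natural limit of (a) with the terminal block reinterpreted as the aperiodic Thue--Morse word~$\t$.

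Taking a countable union over the shift bringing $(x,y)$ into $P_1$ and over the finite data $(a_0,\dots,a_n,b_0,\dots,b_{n-1})$, the forward half of any sequence in $R$ lies in a countable set. Applying the same argument through the symmetry $(x,y)\mapsto(1-y,1-x)$ to control the backward half, or invoking Proposition~\ref{prop:P_1 dim} for the portion of the orbit preceding its first entry into $P_1$, I conclude that $R$ is a countable union of zero-dimensional subshifts, so $\dimh R=0$. The main obstacle I anticipate is rigorously handling case (b): ensuring that orbits which enter every $P_k$ but avoid $P_\infty$ genuinely pin some iterate to $x\in\{\t,1-\t\}$, rather than allowing a positive-dimensional family of boundary configurations to evade the analysis. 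The crux is the elementary identification $\bigcap_k P_k=\{(x,y):\t\le x\le 1-\t,\,2x-1<y<2x,\,2-4x<y<3-4x\}$, which differs from $P_\infty$ exactly on the two vertical edges $x=\t$ and $x=1-\t$.
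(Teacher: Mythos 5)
Your opening decomposition is indeed the same as the paper's: split $\J(P_\infty)$ into $\bigcup_k\J(P_k)$ (dimension zero by Proposition~\ref{prop:Pk} and countable stability of Hausdorff dimension) and the residual set $R=\J(P_\infty)\setminus\bigcup_k\J(P_k)$. The gap is in your treatment of $R$, and it is a real one: you claim the forward half of any sequence in $R$ is pinned down by \emph{finite} data $(a_0,\dots,a_n,b_0,\dots,b_{n-1})$, or degenerates to the single boundary value $\t$ or $1-\t$, and from this you deduce countability. Neither sub-case applies. By definition a point of $R$ has an orbit entering \emph{every} $P_k$. If the $x$-coordinate of some return to $P_1$ were eventually periodic with tail $\t_{a_n}^\infty$, the returning $x$-coordinates would settle at $\t_{a_n}$; since $\t_m>\t_{a_n}$ for $m>a_n$, the orbit would forever miss $P_m$ for $m>a_n$, contradicting membership in $R$. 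Hence the concatenation $\t_{a_0}^{b_0}\t_{a_1}^{b_1}\cdots$ cannot terminate; it must run through infinitely many levels $a_0<a_1<a_2<\cdots\to\infty$, and no single iterate need lie on $\{x=\t\}\cup\{x=1-\t\}$. Your case dichotomy (a)/(b) therefore misses exactly the generic behaviour of points in $R$. Moreover $R$ is not countable in general: the forward halves are parametrised by infinite non-decreasing, unbounded sequences $(k_i)$ via $\t_{k_1}\t_{k_2}\cdots$, and there are uncountably many such sequences. You also cannot invoke Lemma~\ref{lem:levels} as stated, since its hypothesis is $(x,y)\in\J(P_k)$, which is precisely what fails on $R$.

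The paper sidesteps all of this by proving \emph{zero entropy} rather than countability. It characterises the sequences in $R$ combinatorially as concatenations of two block types (monotone $1^{k_i}0^{k_{i+1}}$ blocks and truncated Thue--Morse blocks $\t_{k_i}$ with non-decreasing indices), observes that a window of length $n$ therefore encodes an ordered partition of $n$ (with the Thue--Morse blocks contributing parts that are powers of $2$), bounds the number of admissible $n$-windows by the Hardy--Ramanujan asymptotics $p(n)\sim\frac{c_1}{n}e^{c_2\sqrt n}$ exactly as in Theorem~\ref{thm:Gamma-dimtrap}, and then applies Lemma~\ref{lem:entropy-dim}. That sub-exponential block-counting argument is the missing ingredient in your proof; countability alone cannot be salvaged. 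Your anticipated ``main obstacle'' is real, but you have mislocated it: the danger is not a positive-dimensional family of boundary configurations, it is a zero-dimensional yet uncountable one, for which an entropy estimate, not a cardinality estimate, is the correct tool.
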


\begin{proof}
If a sequence is in $\J(P_k)$, then the claim follows from Proposition~\ref{prop:Pk}.
If a sequence is in $\J(P_\infty)\setminus \J(P_k)$ for all $k\ge1$, then
it has to be a concatenation of two types of blocks: of the form
    $1^{k_i} 0^{k_{i+1}}$, where $k_i\ge k_{i+1}$, or of the form $\t_{k_i}$ for some $k_i \geq 2$ with
    $k_i \geq k_{i-1}$.
This corresponds to a point on the boundary of $P_\infty$.

We see that a block of the first kind cannot appear after a block of the second kind.
If we have blocks of the first kind,
    some of them end with $1^{j_1} 0 1^{j_2} 0\ldots$ with $j_i\ge j_{i+1}$.
So, we have $2n+1 = k_i + k_{i+1} +\dots +$ (possibly) $(j_1+1) + (j_2+1) +\dots$. In other words, we again have an
    ordered partition of $2n+1$, and the result follows from
    the Hardy-Ramanujan formula and Lemma~\ref{lem:entropy-dim}, similarly to the proof of Theorem~\ref{thm:Gamma-dimtrap}.
If we have blocks of the second kind, then we have
    $n = 2^{k_i} + 2^{k_{i+1}} +\dots +$. Consequently, we again have a subset of
    ordered partition of $n$, and the result follows again from
    the Hardy-Ramanujan formula and Lemma~\ref{lem:entropy-dim}.
\end{proof}

\begin{prop}\label{prop:14gon}
Any dimension trap contained in $P_\infty$ must contain the vertices $(\bm t, \bm t),
    (1-\bm t, \bm t),
    (1-\bm t, 1-\bm t)$ and
    $(\bm t, 1-\bm t)$.
\end{prop}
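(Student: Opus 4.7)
The plan is to adapt the Cantor-set argument from the proof of Theorem~\ref{thm:Delta_0} to $P_\infty$. By the involutive symmetries $(x,y)\leftrightarrow(y,x)$ and $(x,y)\leftrightarrow(1-x,1-y)$ --- each preserving $P_\infty$ and together acting transitively on the four listed points --- it suffices to show that $(\bm t,\bm t)\in\overline D$ whenever $D\subseteq P_\infty$ is a dimension trap.

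Suppose, for a contradiction, that $D\cap B_\epsilon(\bm t,\bm t)=\emptyset$ for some $\epsilon>0$. For each large $N$ I would produce a $\sigma$-invariant subshift $Y_N\subseteq\{0,1\}^{\mathbb Z}$ with the following properties: (i) $h(Y_N)>0$; (ii) $\pi(Y_N)\subseteq\J(D)$, i.e.\ every shift of every $w\in Y_N$ projects to a point of $X\setminus D$; and (iii) some shift of each $w\in Y_N$ projects into $B_\epsilon(\bm t,\bm t)$. Given such a $Y_N$, Lemma~\ref{lem:entropy-dim} yields $\dimh \J(D)\ge\dimh\pi(Y_N)=2h(Y_N)>0$, contradicting the dimension-trap hypothesis.

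For the construction, write $u_N=t_0t_1\ldots t_{2^N-1}$ for the length-$2^N$ Thue-Morse prefix and $\bar u_N$ for its complement, so that $u_{N+1}=u_N\bar u_N$. Take $Y_N$ to be the $\sigma$-closure of the set of bi-infinite sequences
\[
w=\ldots B_{-1}B_0\cdot B_1B_2\ldots,\qquad B_j\in\{u_Nu_N,\,u_{N+1}\},
\]
with the tails, far from the origin, further constrained to match the monotone block structure of $\J(P_\infty)$ identified in the preceding theorem. Both admissible central blocks have length $2^{N+1}$, so the two free choices per block give $h(Y_N)\ge(\log 2)/2^{N+1}$, establishing~(i). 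Property~(iii) is immediate: near the origin, $w$ consists of long prefixes of $\bm t$ on both sides of the decimal point, so $\pi(w)$ lies within $O(2^{-N})$ of $(\bm t,\bm t)$, which is less than $\epsilon$ for $N$ large.

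The main obstacle is property~(ii), which requires checking, for every $k\in\BbZ$ and every $w\in Y_N$, that $\pi(\sigma^k(w))\notin P_\infty$. Using the self-similarity $u_{N+1}=u_N\bar u_N$, each shift produces a projection whose $x$-coordinate begins either with a prefix of $\bm t$ (forcing $x\le\bm t$) or with its complement (forcing $x\ge 1-\bm t$), with analogous statements for~$y$. Case-analysing the residue of $k$ modulo $2^{N+1}$ together with the side-tail constraints should then show that at least one of the six defining half-plane inequalities of $P_\infty$ is violated, placing $\pi(\sigma^k(w))$ outside $P_\infty$. This combinatorial verification, in the spirit of the case analysis of Lemma~\ref{lem:P}, is the heart of the argument; once complete, the dimension-entropy estimate above closes the proof.
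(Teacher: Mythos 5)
The paper's proof is a short direct argument: it examines the single Cantor set $\{\t_k,\overline{\t_k}\}^*$ (where $\t_k$ is read as the Thue--Morse prefix of length $2^k$), identifies its extreme orbit points inside the strip $\t<x<1-\t$, and lets $k\to\infty$. Your proposal instead sets up a proof by contradiction, reduces to the vertex $(\t,\t)$ by symmetry, and then attempts to build a different, more elaborate subshift $Y_N$ over the blocks $u_Nu_N$ and $u_{N+1}$ (plus unspecified tail constraints) whose entire orbit is claimed to miss $D$. So the two approaches share the thematic entropy-to-dimension mechanism via Lemma~\ref{lem:entropy-dim} but use genuinely different constructions.

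The key step in your plan is unresolved, and as stated it is not correct. You assert that ``each shift produces a projection whose $x$-coordinate begins either with a prefix of $\bm t$ (forcing $x\le\bm t$) or with its complement (forcing $x\ge 1-\bm t$).'' This fails in two ways. First, shifts of $w$ that are not aligned with block boundaries begin with a \emph{shifted} Thue--Morse factor, which is generally not a prefix of $\bm t$ or $\overline{\bm t}$: already for $N=1$, a mid-block shift of $u_1u_1=0101$ gives an $x$-coordinate beginning $101\ldots$, which agrees with neither $011\ldots$ nor $100\ldots$. Second, even when the expansion does begin with a prefix of $\bm t$, this alone does not force $x\le\bm t$; the comparison depends on the continuation (e.g.\ $.0111\ldots>\bm t$ even though $011$ is a Thue--Morse prefix). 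So the residue-mod-$2^{N+1}$ case analysis you flag as ``the heart of the argument'' is not merely unfinished; the mechanism you propose to drive it does not, by itself, yield the inequalities needed to place $\pi(\sigma^k w)$ outside $P_\infty$. You would need a substantially more delicate combinatorial analysis (closer in spirit to the factor-exclusion arguments of Lemma~\ref{lem:P}/Lemma~\ref{lem:levels}) to control the non-aligned shifts, and separately you should settle whether taking the $\sigma$-closure after imposing non-$\sigma$-invariant tail constraints reintroduces unwanted points.
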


\begin{proof}
Consider the Cantor set $\{\t_k, \overline{\t_k}\}^*$.
We see that the point $(x,y)$, $x > 1/2$ in the orbit of this Cantor set with
    minimal $x$ value has $x = \overline{\t_k} \t_k^\infty$.
Similarly, the point $(x,y)$, $x < 1/2$ in the orbit of this Cantor set with
    maximal $x$ value has $x = \t_k \overline{\t_k}^\infty$.
The possible $y$ values associated to these $x$ values are
    $\t_k \overline{\t_k}^\infty$ or $\overline{\t_k} \t_k^\infty$.
The result follows by taking $k \to \infty$.
\end{proof}

Using the techniques in Section \ref{sec:comp} we can show that if $P \subset P_\infty$ is a
    convex dimension trap, then $\mathcal L(P) \geq 0.11924$.
Here, for comparison, $\mathcal L(P_\infty) \approx 0.12911$.

\section{Searching for asymmetric dimension traps containing $(1/2, 1/2)$.}
\label{ssec:dim trap55b}

Let $r \in (0, 1/2] \cap \mathbb{Q}$ and define $a = .\zmax(r)^\infty$ and
    $b = .\omin(r)^\infty$.
From \cite{HS14} and the discussion in Section \ref{sec:doubling}  we
    know that $\dimh(\J^+(a,b)) = 0$, and moreover that $\J^+(a,b)$
    contains only a finite number of cycles.
The simplest example of this is for $r = 1/2$.
In this case $a = .\zmax(1/2)^\infty = 1/3$ and $b = .\omin(1/2)^\infty = 2/3$,
    and $\J(1/3, 2/3)$ contains only the fix points $\{0, 1\}$ and the
    2-cycle $\{1/3, 2/3\}$.
In Section \ref{sec:55} we studied the analogous construction, there labelled
    $P_1$, of the more general open polygons $P_r$ with vertices given by
\[ (1/2, 0), (a, 2a), (1/2, 1), (b, 2b-1). \]
Here again, $a = .\zmax(r)^\infty$ and $b = .\omin(r)^\infty$.
That is, $P_1$ of Section~\ref{sec:55} is $P_{1/2}$ under this new
    notation.
In this section we consider these more general $P_r$ for other rational
    numbers $r$, and show that the results analogous to those of the doubling
    map unfortunately do not hold.

\begin{nota}
Let $r = \frac{c}{d} < \frac{1}{2}$.
We define $r_0 = \frac{c_0}{d_0}$ as the unique Farey neighbour\footnote{That is,
a fraction with $|cd_0-c_0d|=1$.} of $r$
    such that $r < r_0$ and $d_0 < d$.

We define $r_k = \frac{c_k}{d_k} = \frac{c_{k-1} + c}{d_{k-1} + d}$ as
    the $k^{th}$ Farey neighbour of $r$.
\end{nota}

\begin{lemma}
\label{lem:facts}
Let $r_k$ be defined as above.
\begin{enumerate}
\item $\lim_{k \to \infty} r_k = r$.
\label{part:1}
\item $\zmax(r_k) = (\zmax(r))^{k-i} \omin(r_i) = \zmax(r_i) (\zmax(r))^{k-i}$
      for $i = 0, 1, \dots k$.
\label{part:2}
\item $\zmax(r)^\infty \prec \dots \prec \zmax(r_2)^\infty
        \prec \zmax(r_1)^\infty \prec \zmax(r_0)^\infty$.
\label{part:3}
\item $\omin(r)^\infty \prec \dots \prec \omin(r_2)^\infty
        \prec \omin(r_1)^\infty \prec \omin(r_0)^\infty$.
\label{part:4}
\end{enumerate}
\end{lemma}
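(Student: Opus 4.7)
For part (1), a single induction on $k$ suffices: from $c_k=c_{k-1}+c$, $d_k=d_{k-1}+d$ one obtains $c_k=c_0+kc$, $d_k=d_0+kd$, and hence $r_k=(c_0+kc)/(d_0+kd)\to c/d=r$. The same step preserves the Farey-neighbour relation, since $c d_k - c_k d = c d_0 - c_0 d = \pm 1$; because $r_0>r$ and each $r_k$ is the Farey mediant of $r_{k-1}$ and $r$, the sequence automatically satisfies $r<\dots<r_2<r_1<r_0$, which will be needed in (3)--(4).

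Part (2) is the combinatorial heart of the lemma, and I would prove it by induction on $k$ with the crucial step being the Farey-mediant identity
\[
 \zmax(r_k)=\zmax(r_{k-1})\,\zmax(r).
\]
To establish this I plan to verify directly that the concatenation $\zmax(r_{k-1})\zmax(r)$ has length $d_k$, has 1-ratio $r_k$, begins with $0$, is cyclically balanced (here the Farey relation $|c_{k-1}d-cd_{k-1}|=1$ together with the standard concatenation lemma for cyclically balanced words of adjacent slopes is the crucial input), and is lex-largest among such words by comparing cyclic shifts. Iterating this identity then gives the second equality $\zmax(r_k)=\zmax(r_i)\zmax(r)^{k-i}$ for all $0\le i\le k$. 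For the first equality, I would combine the above with the conjugate-commutation
\[
 \zmax(r_{k-1})\,\zmax(r)=\zmax(r)\,\omin(r_{k-1}),
\]
which follows from the word-theoretic lemma ``$uv=vw$ with $|u|=|w|>0$ forces $u$ and $w$ to be conjugate'' together with the fact that $\zmax(r_{k-1})$ and $\omin(r_{k-1})$ are the two distinguished cyclic rotations of the same cyclically balanced necklace; iterating this shift identity then covers the remaining values of $i$.

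For parts (3) and (4), I would deduce both chains from (1) using the classical monotonicity of the mechanical (characteristic) words in the slope: for coprime $p/q<p'/q'$ in $(0,1)$, one has both $\zmax(p/q)^\infty\prec\zmax(p'/q')^\infty$ and $\omin(p/q)^\infty\prec\omin(p'/q')^\infty$ (provable via the balanced property, or, self-contained, by iterating the identity from (2) on the tails after the common prefix $\zmax(r_{k-1})$ until the first disagreement digit appears). Applied to the strictly decreasing chain $r<\dots<r_2<r_1<r_0$ from (1), this produces both stated chains at once. The main obstacle I anticipate is the mediant-concatenation identity in (2) --- specifically the cyclic-balance check for $\zmax(r_{k-1})\zmax(r)$ using only the Farey relation, and the companion conjugate-commutation identity; once these are in hand, everything else reduces to iteration and routine lex comparison.
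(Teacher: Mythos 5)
The paper itself gives no proof of this lemma --- it is declared to ``follow from [GS15, Lemma~5.1]'' and left as an exercise --- so there is no house argument to compare against, and your sketch has to stand on its own. Parts (1), (3) and (4) of your proposal are sound. For part (2) the mediant relation $\zmax(r_k)=\zmax(r_{k-1})\zmax(r)$, giving $\zmax(r_k)=\zmax(r_i)\zmax(r)^{k-i}$ by iteration, is the right engine. A smaller issue is rigour: the word lemma ``$uv=vw$ forces $u,w$ conjugate'' runs only one way, so knowing that $\zmax(r_i)$ and $\omin(r_i)$ are conjugate does not by itself deliver $\zmax(r_i)\zmax(r)=\zmax(r)\omin(r_i)$; you need to verify directly that $\zmax(r)\omin(r_{k-1})$ is the lex-largest rotation of the $r_k$-necklace beginning with $0$.

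The serious gap is the step where you ``iterate the shift identity'' to cover all $i$. One application of the commutation yields $\zmax(r_k)=\zmax(r)\,\omin(r_i)\,\zmax(r)^{k-i-1}$; pushing $\omin(r_i)$ further to the left would require $\omin(r_i)\zmax(r)=\zmax(r)\omin(r_i)$, i.e.\ that two distinct words of equal length commute, which forces them to be powers of a common word and is false here. Indeed, in the paper's own example $r=3/10$ (so $\zmax(r)=0100100100$, $\zmax(r_0)=010$, $\omin(r_0)=100$), taking $i=0$, $k=2$ one computes
\[
\zmax(r_0)\zmax(r)^2=01001001001000100100100,\qquad
\zmax(r)^2\omin(r_0)=01001001000100100100100,
\]
which differ at the eleventh letter; they are cyclic rotations of the same balanced necklace but not equal as linear words. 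So the first equality in part~(2), read literally, fails for $i\le k-2$, and no iterative shuffle can produce it. What is actually provable by your method is $\zmax(r_k)=\zmax(r_i)\zmax(r)^{k-i}$ for all $i$, together with $\zmax(r_k)=\zmax(r)\omin(r_{k-1})$, equivalently $\zmax(r_k)=\zmax(r)\,\omin(r_i)\,\zmax(r)^{k-i-1}$; you should prove this corrected statement (or interpret the displayed identity up to cyclic rotation) rather than try to establish the printed one word-for-word.
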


\begin{example}
Consider as an example $r = \frac{3}{10}$.
One can easily check that $r_0 = \frac{1}{3}$.
This gives $r_1 = \frac{1 + 3}{3 + 10} = \frac{4}{13}$ and more
    generally, $r_k = \frac{1 + 3 k}{3 + 10 k}$.

In this case we see that
    $\zmax(r) = 0100100100$ and $\zmax(r_0)= 010$,
    $\zmax(r_1) = 0100100100100$, and in general,
    $\zmax(r_k) = (0100100100)^{k-1} 100$.
The other results can be easily verified for this example.
\end{example}

The proof follows from \cite[Lemma~5.1]{GS15} and is
    left as an exercise to the reader.

\begin{thm}
Let $r < \frac{1}{2}$.  Then there exists a $k$ such that the Cantor set
    $\{\zmax(r), \zmax(r_k)\}^*$ is disjoint from $P_r$.
\end{thm}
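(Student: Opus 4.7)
The plan is to show that for sufficiently large $k$, every shift of every bi-infinite sequence $\omega\in\{\zmax(r),\zmax(r_k)\}^{\BbZ}$ maps under $\pi$ to a point in the complement of $P_r$. By Lemma~\ref{lem:facts}(2), each $\zmax(r_k)$-block expands as $\zmax(r)^k\omin(r_0)$, so every $\omega$ is a bi-infinite string of $\zmax(r)$-blocks occasionally interrupted by an $\omin(r_0)$-insertion; for $k$ large, consecutive insertions are at distance at least $kq$, where $q$ is the length of $\zmax(r)$.

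I would classify shifts into two regimes, controlled by a parameter $M\gg q$ to be fixed later. In the \emph{long-block regime}, the shift lies at distance at least $M$ from every $\omin(r_0)$-insertion; then both tails agree with $\zmax(r)^\infty$ in their first $M$ symbols, so $\pi(\sigma^i(\omega))$ is within $2^{-M}$ of a point of the periodic $B$-orbit $O$ of $\pi(\zmax(r)^\infty)$. The $q$ points of $O$ have $x$-coordinates equal to the cyclic shifts of $.\zmax(r)^\infty$, and by standard properties of cyclically balanced words these all lie in $[0,a]\cup[b,1]$; since $\overline{P_r}$ meets $\{x=a\}\cup\{x=b\}$ only at the two vertices $(a,2a)$ and $(b,2b-1)$, one can check directly that for $r<1/2$ no point of $O$ coincides with either vertex, so $O$ sits in the open complement of $\overline{P_r}$ and a small enough neighbourhood of $O$ also avoids $P_r$. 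In the \emph{near-insertion regime}, a shift lies within $M$ of some $\omin(r_0)$-insertion; modulo $q$ this gives only finitely many local configurations, each with a well-defined limit pair $(x,y)$ as $k\to\infty$ (obtained by replacing the flanking $\zmax(r)^k$-stretches by $\zmax(r)^\infty$). For each limit I would write $(x,y)$ explicitly in terms of $\zmax(r)$, $\omin(r_0)$, and $\zmax(r_0)$ and verify against the four linear edge inequalities of $P_r$ that it lies in the complement of $\overline{P_r}$.

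The main obstacle is that in either regime the limit point may in fact land on the boundary of $\overline{P_r}$: for instance, when $r=1/3$ the orbit $O$ contains the vertex $(4/7,1/7)=(b,2b-1)$, so the openness argument alone fails there. In such borderline cases one must identify the \emph{sign} of the first-order perturbation from finite $k$ and show that the actual point $\pi(\sigma^i(\omega))$ lies on the exterior side of the two edges of $P_r$ meeting at that vertex. I expect this sign to be controlled by the strict lexicographic inequalities $\zmax(r)^\infty\prec\zmax(r_0)^\infty$ and $\omin(r)^\infty\prec\omin(r_0)^\infty$ of Lemma~\ref{lem:facts}(3)--(4), both of which encode $r<r_0$. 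The hypothesis $r<1/2$ enters precisely through the existence of the Farey neighbour $r_0$, and through the fact that the strict ordering $r<r_0$ makes the $\omin(r_0)$-insertion perturb $x$ outward (past $a$ or $b$) rather than inward; this stands in sharp contrast with the symmetric case $r=1/2$, where the analogous perturbation vanishes and $P_{1/2}=P_1$ is indeed a dimension trap by Proposition~\ref{prop:P_1 dim}.
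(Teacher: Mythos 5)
Your proposed decomposition (long-block versus near-insertion) and the reliance on a compactness plus perturbation argument are plausible in outline, but the proof has a genuine gap that you yourself flag and do not close: when a limit point of the long-block regime lands on $\partial \overline{P_r}$ (e.g.\ the vertex $(b,2b-1)=(4/7,1/7)$ is on the periodic orbit $O$ when $r=1/3$), the statement ``a small enough neighbourhood of $O$ avoids $P_r$'' is simply false, since $(b,2b-1)$ is a limit of interior points of $P_r$. Saying that you ``expect'' the sign of the first-order correction to be governed by the strict inequalities of Lemma~\ref{lem:facts}(3)--(4) is a conjecture, not a proof; until those signs are actually computed and shown consistent for every shift and both edges meeting the vertex, the argument does not establish disjointness. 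The same issue shadows the near-insertion regime: your limit pairs may also lie on $\partial\overline{P_r}$, in which case ``verify against the four linear edge inequalities'' is inconclusive without a perturbative sign analysis. Because $P_r$ is \emph{open}, a finite-$k$ point that sits exactly on the boundary is perfectly fine; the trouble is that your argument never accesses the finite-$k$ point directly, only its $M\to\infty$ or $k\to\infty$ limit, and passing to the limit destroys exactly the information you need.

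The paper avoids this entirely by splitting on the first symbol pair $(x_1,y_1)$ rather than on distance to insertions, and then arguing \emph{lexicographically on the actual sequence}, not on a limit. If $x_1=1$, Lemma~\ref{lem:facts}(4) gives $x\ge b$ \emph{on the nose} (not approximately), so $(x,y)$ is outside the open set $P_r$ regardless of $y$ and regardless of $k$; your troublesome vertex $(b,2b-1)$ is therefore harmless. If $x_1=y_1=0$, a single limiting argument suffices because $(a,a)$ is a positive distance from $\overline{P_r}$, not merely outside $P_r$. If $x_1=0,y_1=1$, a cyclic-shift comparison inside $\zmax(r)$ shows $x<a$ strictly. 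This case split directly exploits the geometry ``$x\le a$ or $x\ge b$ implies $(x,y)\notin P_r$'' and eliminates the double limit ($M$ large, $k$ large) that your scheme requires. To repair your proof you would need to replace the approximation-by-$O$ step with an exact lexicographic bound on $x$ at each problematic shift, at which point you would essentially be reproducing the paper's argument; as written, the boundary cases are a gap.
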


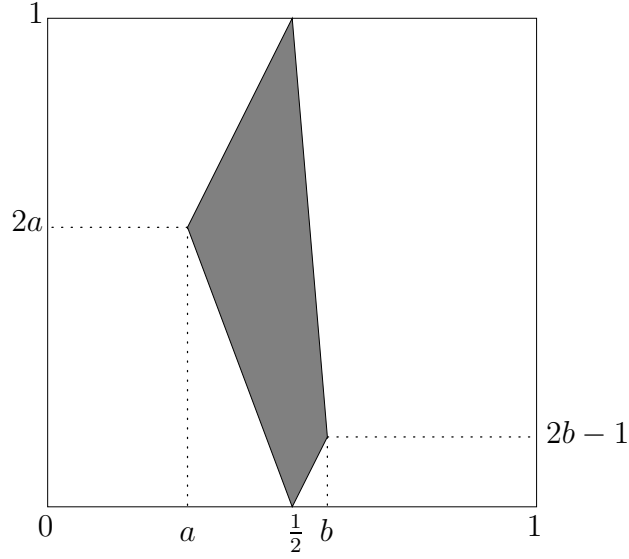
\begin{figure}
\centering
\centering \unitlength=1.3mm
    \begin{picture}(50,53)(0,0)
            \thinlines
  \path(0,0)(0,50)(50,50)(50,0)(0,0)
  \dottedline(14.3,28.6)(14.3,0)
  \dottedline(14.3,28.6)(0,28.6)
    \dottedline(28.6,7.15)(28.6,0)
      \dottedline(28.6,7.15)(50,7.15)
    \shade\path(14.3,28.6)(25,50)(28.6,7.15)(25,0)(14.3,28.6)
    \put(-1,-3){$0$}
    \put(24.5,-3.5){$\frac12$}
    \put(49,-3){$1$}
    \put(-2,49.5){$1$}
    \put(13.5,-3.5){$a$}
    \put(27.85,-3.5){$b$}
        \put(51,6.6){$2b-1$}
            \put(-3.7,28.2){$2a$}
          \end{picture}

          \bigskip

\caption{The quadrilateral $P_{1/3}$ -- {\bf not} a dimension trap.
Here $a=2/7\sim(010)^\infty, b=4/7\sim(100)^\infty$.}
\label{fig:P-onethirds}
\end{figure}

\begin{proof}
Let $x = .x_1 x_2 x_3 \ldots$ and $y = .y_1 y_2 y_3 \ldots$ be a
    shift of a point in this Cantor set.
We see that if $x_1 = 1$, then by Lemma~\ref{lem:facts} part \eqref{part:4}
    we have $b \leq x$, with equality only if $x = (\omin(r))^\infty$.
Hence the point avoids $P_r$ as required.

If $x_1 = 0$ and $y_1 = 0$, we have $x, y \leq (\zmax(r_k))^\infty$.
As the point $(a,a)$ lies outside of $P_r$, it is a bounded distance away
    from $P_r$.
This, combined with the fact that $\zmax(r_k)\to a$ as $k \to \infty$, allows
    us to choose $k$ sufficiently large so that $(x,y)$ is bounded away
    from $P_r$.

Finally, assume that $x_1 = 0$ and $y_1 = 1$.
Let $m < n$ such that $\zmax(r)$ has length~$n$ and $\zmax(r_1)$ has length~$m$.
Write
\begin{align*}
\zmax(r) & = w_1 w_2 \ldots w_n \\
\zmax(r_1) & = v_1 v_2 \ldots v_m \\
           & = w_1 w_2 \ldots w_n w_1 w_2 \ldots w_{m-n+1}
\end{align*}
This follows from Lemma~\ref{lem:facts}. (It is worth noting that $m-n < n$.)
Note that
    $\zmax(r_k) = \underbrace{\zmax(r) \ldots \zmax(r)}_{k-1} \zmax(r_1)$
    from Lemma~\ref{lem:facts}.
By noticing that $\zmax(r)$ and $\zmax(r_1)$ starts and ends with $0$,
    we see that we are in one of three cases.
\begin{enumerate}
\item $x = .w_s w_{s+1} \ldots w_n w_1 w_2 \ldots w_n \ldots$ with $s \neq 1$,
    \label{case:1}
\item $x = .w_s w_{s+1} \ldots w_n v_1 v_2 \ldots v_m \ldots$ with $s \neq 1$,
    \label{case:2}
\item $x = .v_s v_{s+1} \ldots v_m w_1 w_2 \ldots w_n \ldots$ with $s \neq 1$.
    \label{case:3}
\end{enumerate}

{\bf Case (\ref{case:1}) and (\ref{case:2}).}
By construction, we have that
\begin{align*}
w_s w_{s+1} \ldots w_n v_1 v_2 \ldots v_{s-1}
& = w_s w_{s+1} \ldots w_n w_1 w_2 \ldots w_{s-1}  \\
& \prec w_1 w_2 \ldots w_n
\end{align*}
and hence $x < a$.

{\bf Case (\ref{case:3}).}
First assume that $s > n$.
Then
\begin{align*}
x & = .v_s v_{s+1} \ldots v_m w_1 w_2 \ldots w_{n} \ldots \\
  & = .w_{s-n} w_{s-n+1} \ldots w_{m-n+1} w_1 w_2 \ldots w_{n} \ldots
\end{align*}

We clearly cannot have
   $w_{s-n} w_{s-n+1} \ldots w_{m-n+1} \succ w_1 w_2 \ldots w_{m-s+2}$.
If $$w_{s-n} w_{s-n+1} \ldots w_{m-n+1} \prec w_1 w_2 \ldots w_{m-s+2},$$
then we are done.
If instead $w_{s-n} w_{s-n+1} \ldots w_{m-n+1} = w_1 w_2 \ldots w_{m-s+2}$, then
$w_{m-s+3} = 1$ by \cite[Lemma~5.1]{GS15}
    and $w_1 = 0$, whence
    $$w_{s-n} w_{s-n+1} \ldots w_{m-n+1} w_1 \prec
    w_1 w_2 \ldots w_{m-s+2} w_{m-s+3}.$$

Next assume that $s \leq n$. We have
\begin{align*}
x & = .v_s v_{s+1} \ldots v_m \ldots \\
  & = .w_s w_{s+1} \ldots w_n w_1 w_2 \ldots w_{m-n+1} \ldots \\
\end{align*}
Similar to the above, we clearly cannot have
   $w_{s} w_{s+1} \ldots w_{n} \succ w_1 w_2 \ldots w_{n-s+1}$.
If $w_{s} w_{s+1} \ldots w_{n} \prec w_1 w_2 \ldots w_{n-s+1}$, then
    we are done.
If instead we have $w_{s} w_{s+1} \ldots w_{n} = w_1 w_2 \ldots w_{n-s+1}$, then
$w_{n-s+2} = 1$ and $w_1 = 0$, whence
    $w_{s} w_{s+1} \ldots w_{n} w_1 \prec w_1 w_2 \ldots w_{n-s+1} w_{n-s+2}$,
    as required.
\end{proof}

\section{Lower bound for the size of a dimension trap}
\label{sec:comp}

Let $\T$ denote the set of all convex open dimension traps.
We define $\delta = \inf_{T \in \T} \mathcal L(T)$.
Note that $\Delta, \Delta'$ and $P_\infty$ are all in $\T$, and
    have area $\frac{13}{96}, \frac{13}{96}$ and $0.129106155\dots$
    respectively. Therefore, $\delta \leq 0.129106155\dots$.
For the precise definition of $\Delta$ and $\Delta'$ see Section \ref{sec:01}
     and for $P_\infty$ see Section \ref{sec:55}.
The goal of this section is to find a good lower bound for $\delta$.

By Remark~\ref{rmk:required points}, a dimension trap must contain one of
      $\{(0, 1/2^n), (1/2^n, 0)\}$ for some $n \geq 1$, and must also contain
      one of $\{(1, 1-1/2^n), (1-1/2^n, 1)\}$ for some $n \geq 1$.
These are our starting ``polygons'' (although they are one-dimensional).

Let $a$ and $b$ be two finite words. We will assume throughout this section
that $.a^\infty \neq .b^\infty$. Define the Cantor sets
\begin{align*}
\C(a,b,n) & = \{a b^n, a b^{n+1}\}^*.
\end{align*}
We further define the limit of this sequence of Cantor sets as follows:
\begin{align*}
\C(a,b) & = \lim_{n\to\infty} \C(a,b,n) \\
         & = \mathcal{O}(\ldots bbb \cdot a bbb \ldots),
\end{align*}
where $\mathcal{O}(\ldots y_3 y_2 y_1 \cdot x_1 x_2 x_3 \ldots)$ is the orbit
    of this point under $B$.

We see that if $T \in \T$, then for all $a, b, n$ we have
    $\C(a,b,n) \cap T \neq \emptyset$.
Furthermore, for all $\epsilon > 0$ and all $a$ and $b$ we have that
    $\C(a,b) \cap (T+B_\epsilon) \neq \emptyset$, where, as above,
     $B_\epsilon$ is the disc centred at the origin of radius $\epsilon$.
This allows us a way to estimate $\delta$ from below.

For some finite collection of $(a,b)$, consider the family $\T_0$ of
    convex polygons such that they contain at least one point from each $\C(a,b)$.
We see that if $T \in \T$ then there exists a $T_0 \in \T_0$ such
    that $T_0 \subset T$.
This gives us that $\delta \geq \inf_{T_0 \in \T_0} \mathcal L (T_0)$.

Here we take advantage of the fact that the forward and backward orbits of
    $\ldots bbb \cdot a bbb \ldots$ tend to the orbits of
    $\ldots bbb \cdot bbb \ldots$.
Thus, for any $\epsilon > 0$ there are only a finite number of points in
    orbit $\mathcal{O}(\ldots bbb \cdot a bbb \ldots)$ that are not within
    $\epsilon$ of the orbit of $\ldots bbb \cdot bbb \ldots$.

We will first look at a small example.
Instead of looking at the full set $\T$, we will consider a restricted
    subset $\T' \subset \T$.
In particular, we will look at the subset of $\T'$ where for each
    $T' \in \T'$ we have $(1/2, 1), (1/2, 0) \in T'$ and $T'$ is closed
    under the symmetry $(x,y) \leftrightarrow (1-x, 1-y)$.
We will wish to find a set $\T_0'$ such for all $T' \in \T'$ with the above
    property either $\mathcal L(T') \geq 0.13$ or there exists a $T_0' \in \T_0'$ with $T_0' \subset T'$.
Clearly, we may assume that all $T_0' \in \T_0'$ satisfy the restriction that
   $(1/2, 1), (1/2, 0) \in T_0'$ and $T_0'$ is closed
    under the symmetry $(x,y) \leftrightarrow (1-x, 1-y)$.
By considering the infimum of $T_0'$  in $\T_0'$ we
   will show that if $T'$ is a dimension trap with the above restrictions,
   then $\mathcal L(T') \geq 293/2688 - 4\epsilon \approx 0.1090029761$,
    where $\epsilon = 10^{-10}$.
We see that $P_\infty$ is an example of such a $T'$.
Thus, the infimum of this subset is bounded above by $0.129106155\dots$.
This is the reason why our search for all polygons with $\mathcal L(T') \leq 0.13$
    above is not unreasonable.

Next, one can check that
\[
\C(0, 10) = \left\{ \left( \frac{1}{3}, \frac{2\cdot 4^n-1}{3\cdot 4^n}\right), \left(\frac{2}{3},
\frac{2\cdot 4^n-1}{6\cdot 4^n}\right), \left(\frac{2\cdot 4^n-1}{3\cdot 4^n}, \frac{1}{3}\right),
\left(\frac{2\cdot 4^n-1}{6\cdot 4^n}, \frac{2}{3}\right)\right\}_{n\geq 0}.
\]
When taking into account the symmetry, and the fact that we are searching for $T'$ with
$\mathcal L(T') < 0.13$, we see that
    only the point $(7/12, 1/3)$ is utilized.
This gives the polygon with vertices $(5/12, 2/3)$, $(1/2, 0)$, $(7/12, 1/3)$ and $(1/2, 1)$
    with area $1/12 < 0.13$.

We next consider the limit Cantor set
\begin{align*}
\C(01, 10) & =  \left\{ (5/6, 1/6), \left(\frac{1}{3}, \frac{4^{n+1} + 3}{6 \cdot 4^n}\right),
\left(\frac{2}{3}, \frac{4^n +3}{3\cdot 4^n}\right),  \right. \\
           & \left. \left(\frac{2 \cdot 4^n - 3}{3 \cdot 4^n}, 1/3\right),
           \left(\frac{2 \cdot 4^n -3}{6 \cdot 4^n}, 2/3\right) \right\}_{n \geq 1}.
\end{align*}
When taking into account the symmetry and the fact that we are searching for $T'$ with $\mathcal L(T') < 0.13$, we see that
     the points $(29/48, 1/3)$ and $(5/12, 1/3)$.
In the first case, this results in the polygon with vertices
    $(19/48, 2/3)$, $(1/2, 0)$, $(29/48, 1/3)$ and $(1/2, 1)$ with area $5/48 < 0.13$.
and the second this results in the polygon with vertices
    $(5/12, 2/3)$, $(5/12, 1/3)$, $(1/2, 0)$, $(7/12, 1/3)$, $(7/12, 2/3)$ and $(1/2, 1)$ with area $1/9 <  0.13$.

We repeat this process again with these two polygons and with
    $\C(0, 001)$, $\C(0, 011)$, $\C(00, 011)$, $\C(001, 010)$, $\C(001,101)$, $\C(001, 110)$ and $\C(011, 100)$.
(In fact we do this with all $a$ and $b$ where $|a|, |b| \leq 3$, but these were the only Cantor sets that
    contributed points outside of the polygons at this point of the algorithm.)
We take advantage here of the observation that if we have $T_1', T_2' \in \T_0'$ such that $T_1' \subset T_2'$, then we
    can remove $T_2'$ from $\T_0'$ and the results still hold.
Even with this, and only looking at those $T_0'$ with $\mathcal L(T_0') < 0.13$, this
    results in $13$ polygons, with areas ranging between $293/2688 \approx .1090029762$ to $ 49/384 \approx .1276041667$.

From this we can conclude that if $T$ is a dimension trap, $(1/2, 0), (1/2, 1) \in T$ and $T$ is closed under the symmetry
    $(x,y) \leftrightarrow (1-x, 1-y)$, then $\mathcal L(T) \geq 293/2688 - 4\epsilon \approx .1090029758$.
This result can be strengthened by taking longer $a$ and $b$, as given in Table~\ref{tab:delta}.

Here the $4 \epsilon$ arises because we are not using {\em all} points in $\C(a,b)$, only all of those points
    that are more than $\epsilon$ away from one of the limit points, and one point within $\epsilon$ of one of the limit points.
If we consider all polygons in this restricted set, then it is possible that all of the vertices should be replaced by
    vertices that are $\epsilon$ closer to the center of the polygon.
This would result in a polygon that is at most $4 \epsilon$ smaller (as the polygons are convex, and contained within $[0,1]^2$).

We summarize the lower bounds given by such computations in Table~\ref{tab:delta}.

\begin{table}[h]
\begin{center}
\begin{tabular}{llllllll}
Contains points      & Symmetry    & length   & \# of   & Lower     & Upper     & Bound      & $\epsilon$ \\
                     &             & of words & polygon & bound     & Bound     & to search  &            \\
                     &             &          & found   &           &           &            &            \\ \hline
$(0, 1/2), (1/2, 1)$ &  mirror     & 8        & 2       & $0.13793$ & $0.13803$ & $0.1381$   & $10^{-10}$ \\ 
$(0, 1/2), (1/2, 1)$ &  none       & 8        & 6       & $0.13532$ & $0.13542$ & $0.1355$   & $10^{-10}$ \\ 
$(1/2, 0), (1/2, 1)$ &  rotational & 8        & 2       & $0.11891$ & $0.12911$ & $0.119$    & $10^{-10}$ \\ 
$(1/2, 0), (1/2, 1)$ &  none       & 8        & 1253    & $0.11802$ & $0.12911$ & $0.1182$   & $10^{-10}$ \\ 
\end{tabular}
\caption{Upper and lower bounds on restricted subsets of $\T'$}
\label{tab:delta}
\end{center}
\end{table}

\begin{rmk}
These proofs are computational in nature, and we summarize some details of these computations in Table \ref{tab:delta}.
An interesting observation to be made is that if we assume $T$ is a convex dimension trap satisfying the symmetry
    $(x,y) \leftrightarrow (1-x, 1-y)$ and $\mathcal L(T) \leq 0.13$, then by considering only words up to length~4,
    we have that $T$ must contain the points $(0,1/2)$ and $(1, 1/2)$ or it
    contains the points $(1/2, 0)$ and $(1/2,1)$.
We note that $P_\infty$ is a convex dimension trap satisfying this property, hence the bound of $0.13$ is reasonable.
All other $T$ are eliminated by this process by having too large a measure.
See Figure~\ref{fig:art1} for the set of polygons in $\mathcal{T}_0$
    with the restriction that they are closed under the symmetry
    $(x,y) \leftrightarrow (1-x, 1-y)$ and $\mathcal L(T) \leq 0.13$,
    which avoid $\mathcal{C}(a, b)$ for $|a|, |b|$ less than $1$, $2$,
    $3$ or $4$.

Similarly, if we assume $T$ is a convex dimension trap satisfying the symmetry
    $(x,y) \leftrightarrow (1-y, 1-x)$ and $\mathcal L(T) \leq 0.1381$, then by considering only words up to length
    4 we have that $T$ must contain the points $(0,1/2)$ and $(1/2, 1)$ or it
    contains the points $(1/2, 0)$ and $(1, 1/2)$.
We again see that $\mathrm{hull}(\Delta' \cup \Delta'')$ satisfies this restriction, hence it is a reasonable
    region to search.
More over, by length $7$ the resulting $T_0$ in $\mathcal{T}_0$ essentially
    look like $\mathrm{hull}(\Delta' \cup \Delta'')$.
See Figure~\ref{fig:art2}.

Lastly, if we assume $T$ is a convex dimension trap with $\mathcal L(T) \leq 0.1292$, with no restrictions on the symmetry,
    nor any restrictions on containing particular points, then by considering only words up to length
    $7$, we have that $T$ must one of $(0,1/2)$ or $(1/2, 0)$ and one of $(1, 1/2)$ or $(1/2, 1)$.
We see that $P_\infty$ is an example of such a dimension trap, hence this is a reasonable region to search.
That is, such a search is equivalent to that done in row 2 or 4 of Table~\ref{tab:delta}.

These observations help justify the restricted cases that are looked at in
    Sections~\ref{sec:01} and \ref{sec:55}.

In Table~\ref{tab:delta}, we will call the symmetry $(x,y) \leftrightarrow (1-y, 1-x)$ a {\em mirror} symmetry, and
    $(x,y) \leftrightarrow (1-x, 1-y)$ a {\em rotational} symmetry.
\end{rmk}

\begin{figure}
\includegraphics[width=160pt,height=200pt,angle=270]{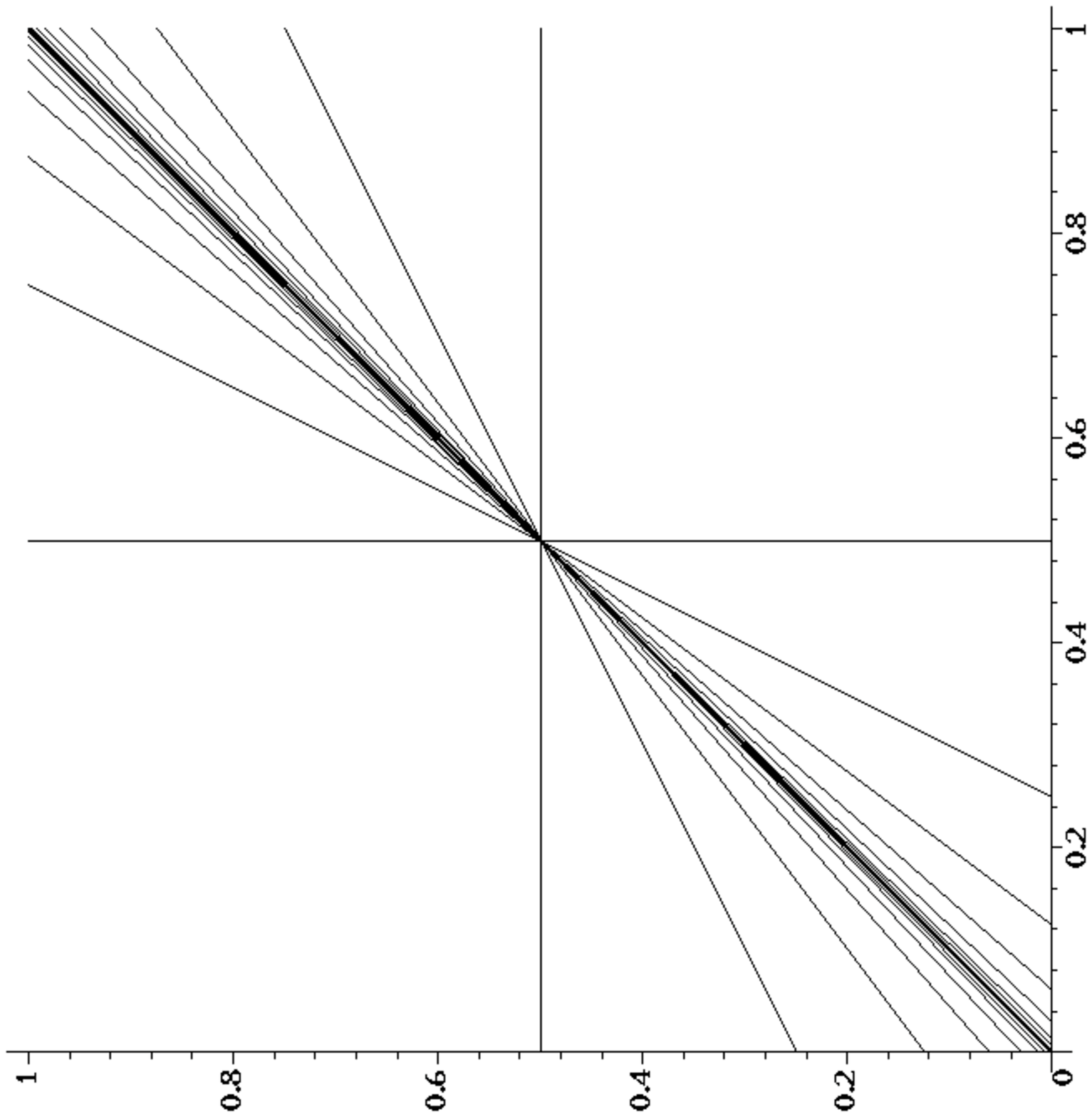}
\includegraphics[width=160pt,height=200pt,angle=270]{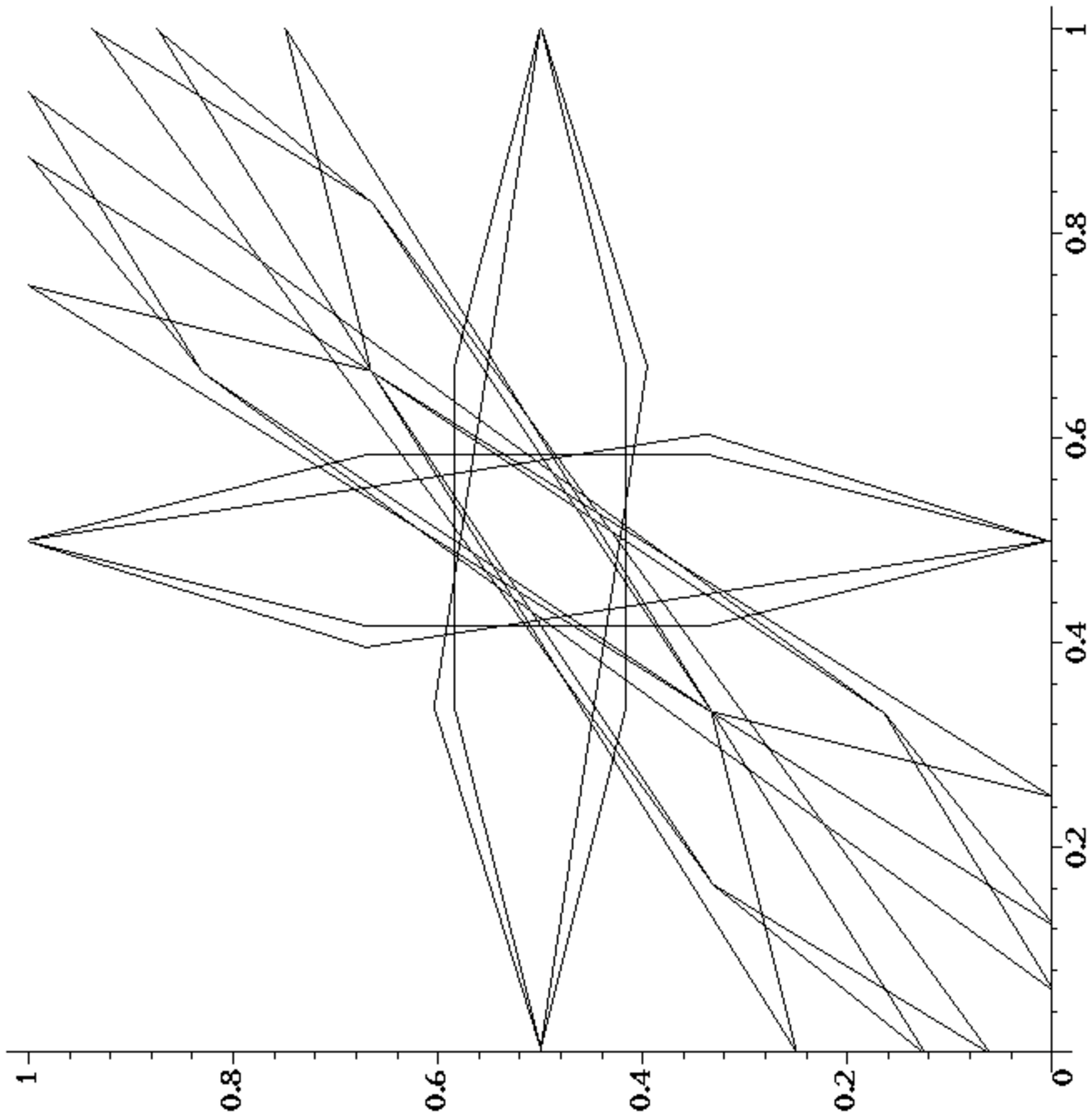} \\
\includegraphics[width=160pt,height=200pt,angle=270]{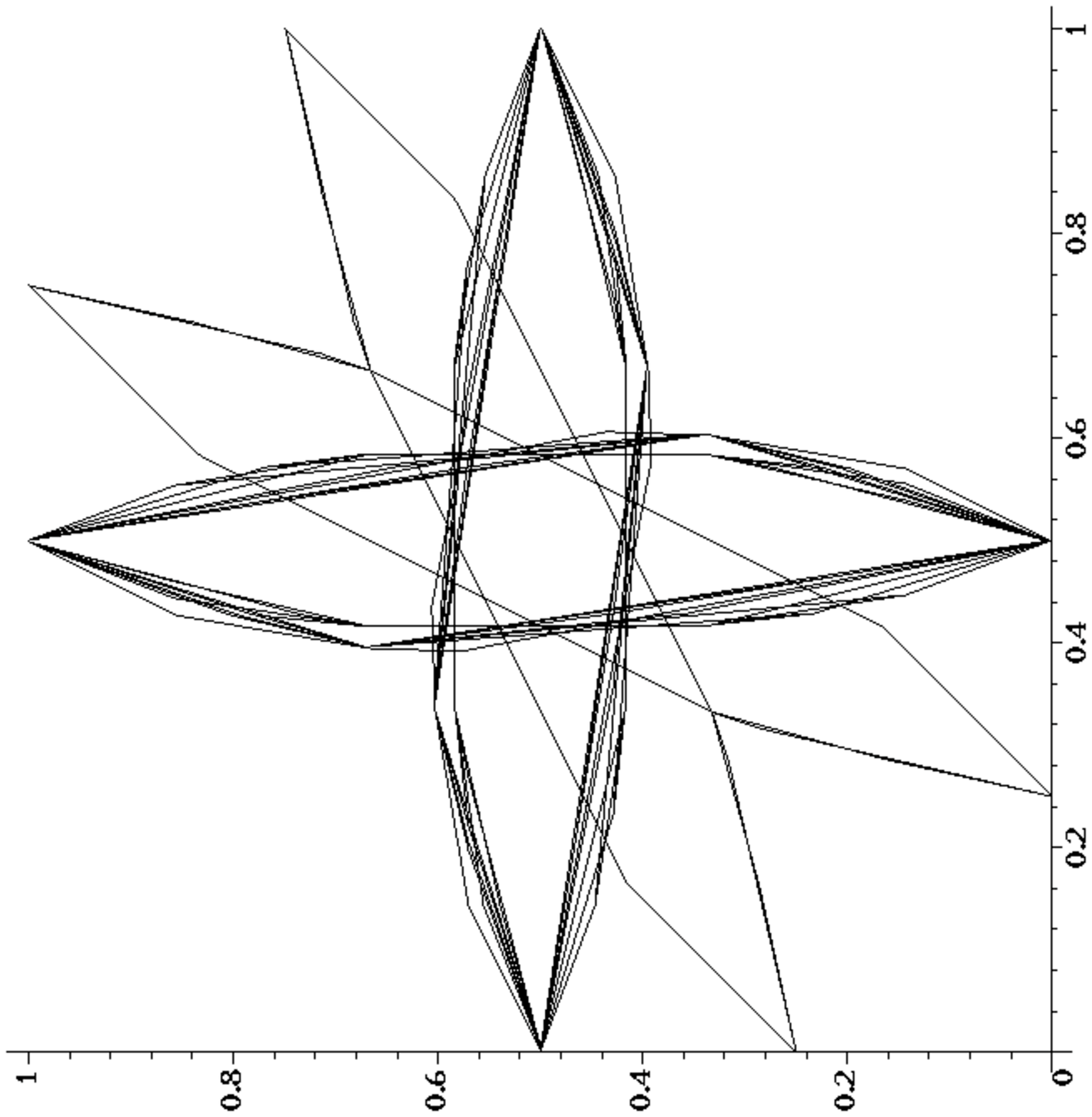}
\includegraphics[width=160pt,height=200pt,angle=270]{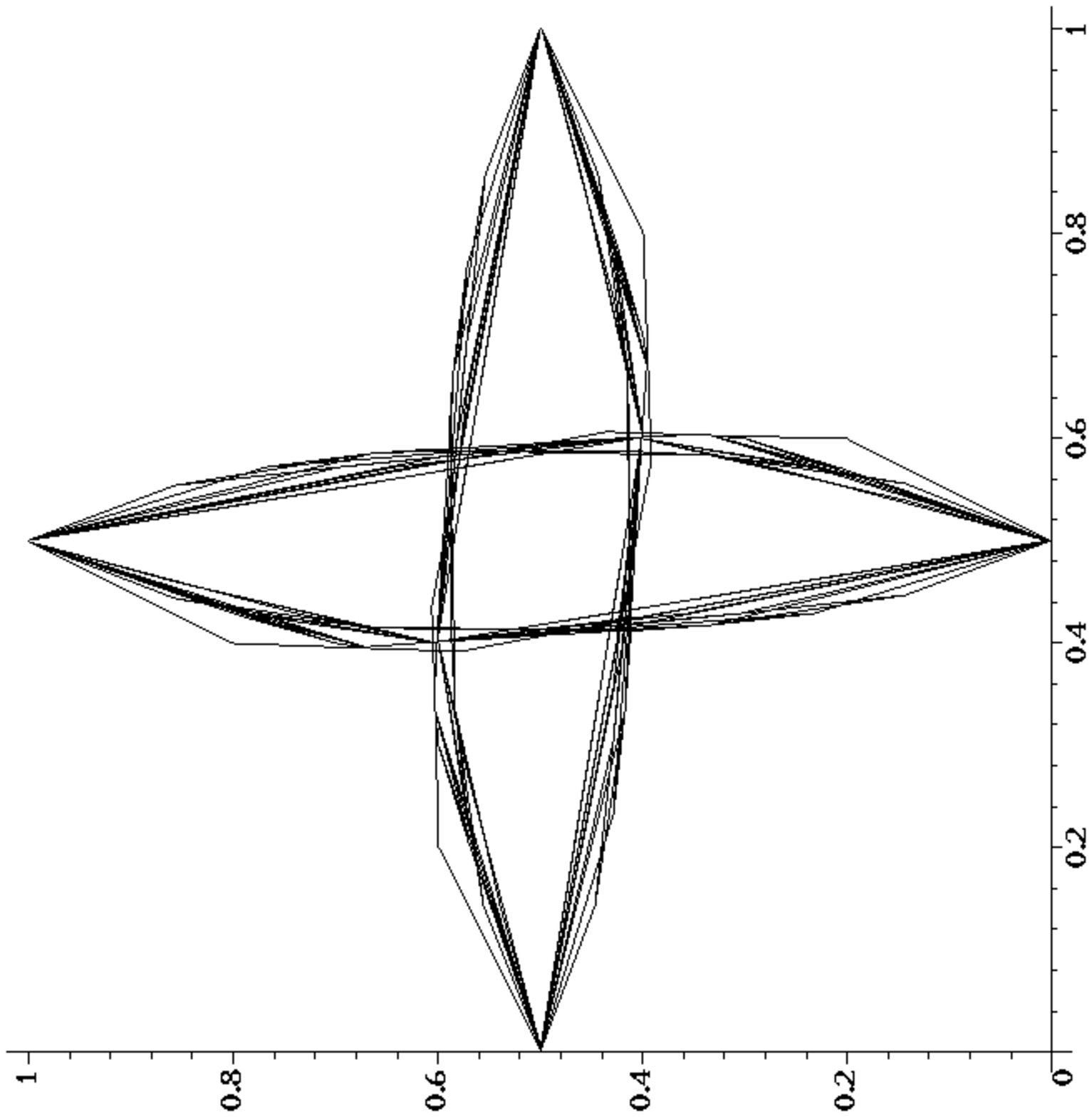}
\caption{Rotational Symmetry -- Using length 1, 2, 3 and 4 Cantor sets}
\label{fig:art1}
\end{figure}

\begin{figure}
\includegraphics[width=160pt,height=200pt,angle=270]{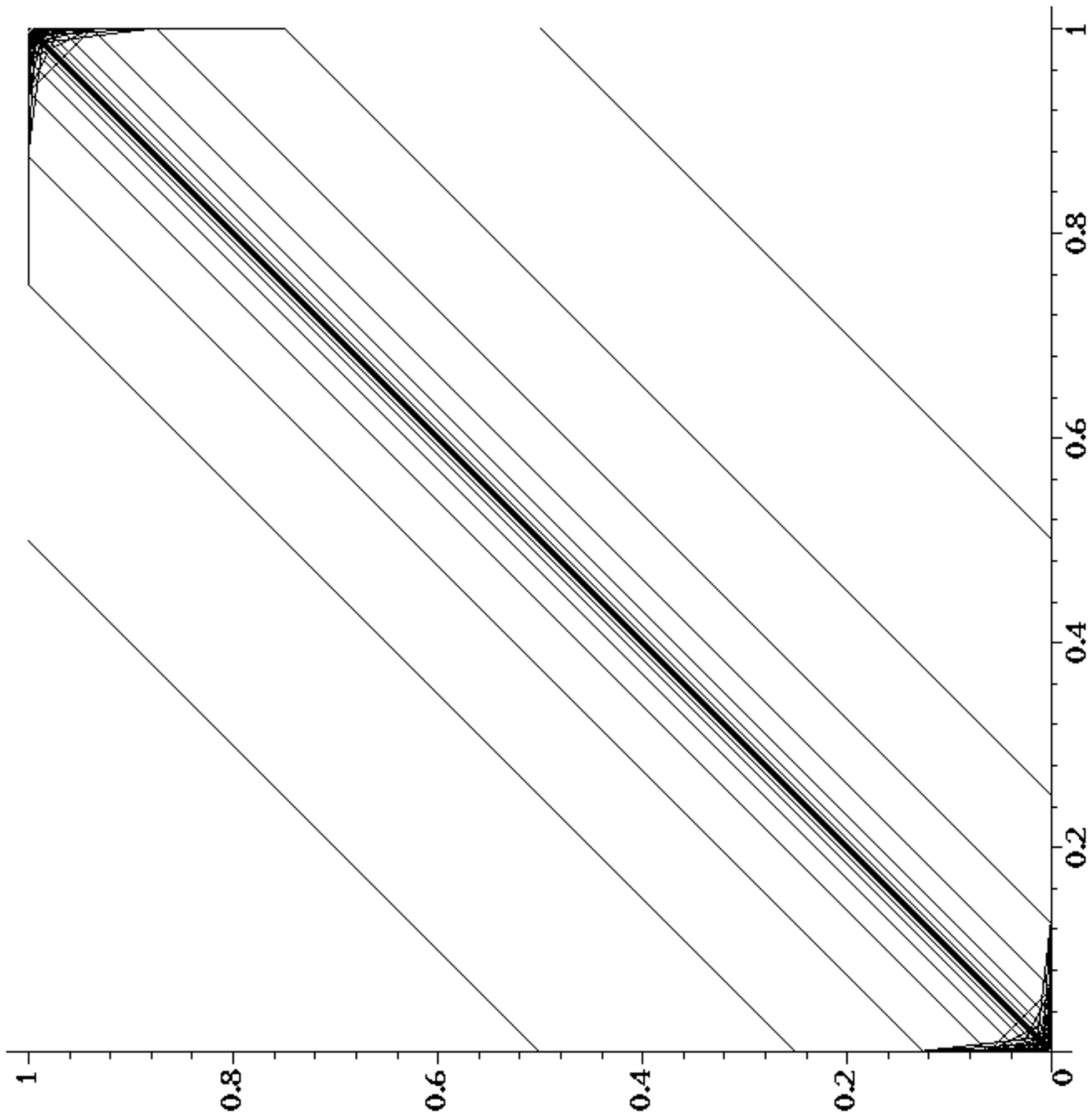}
\includegraphics[width=160pt,height=200pt,angle=270]{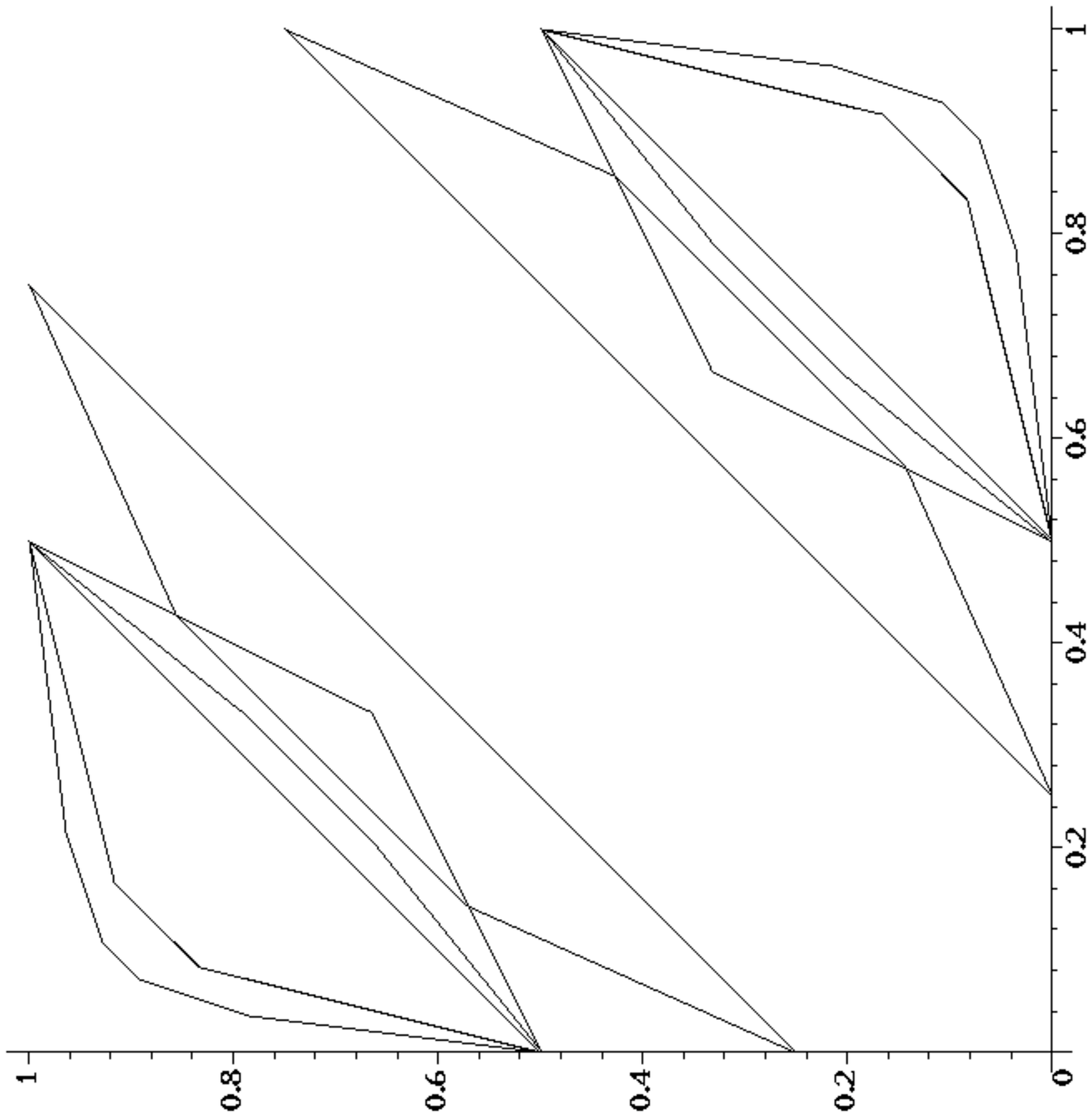}
\includegraphics[width=160pt,height=200pt,angle=270]{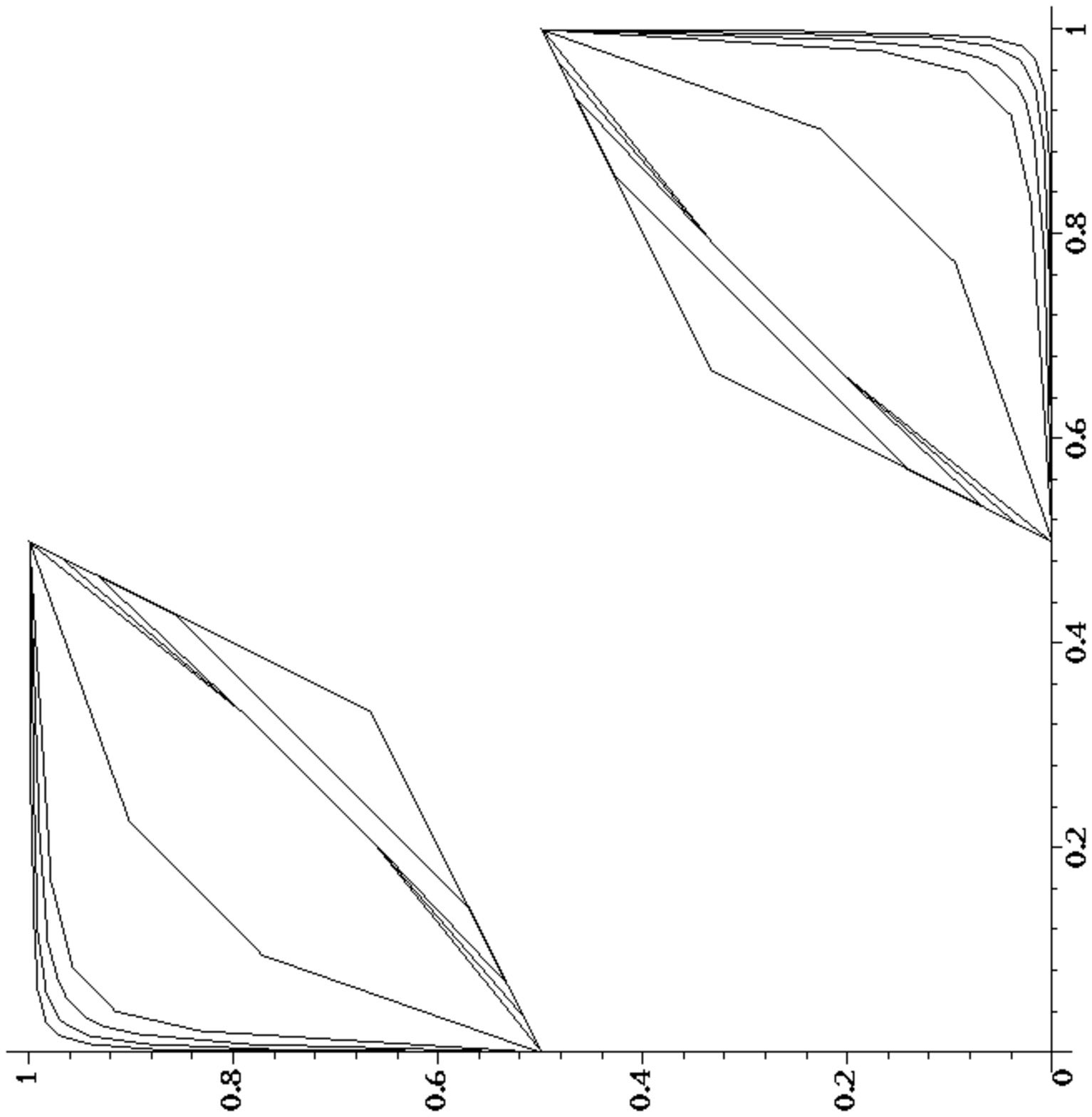}
\includegraphics[width=160pt,height=200pt,angle=270]{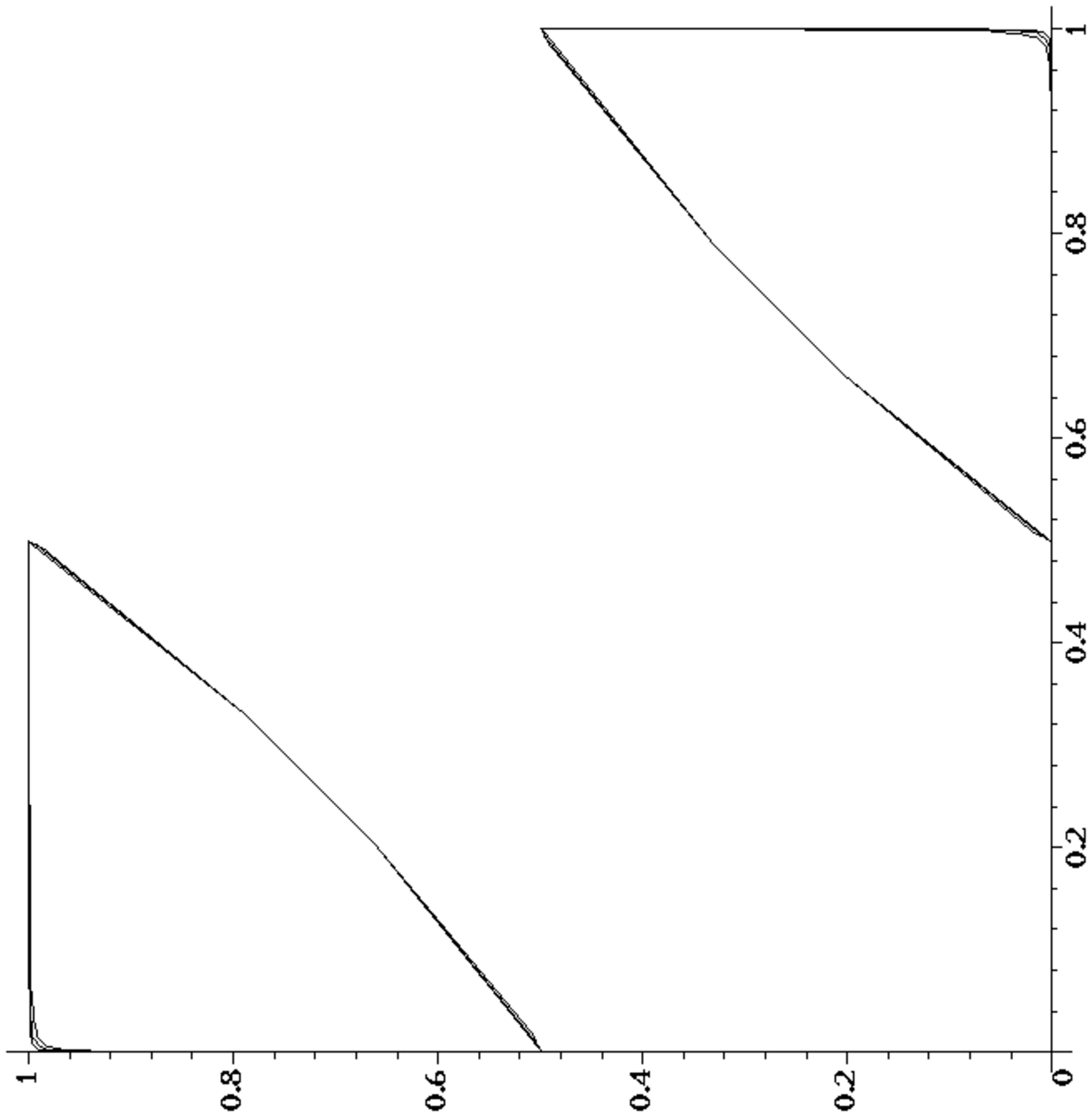}
\caption{Rotational Symmetry -- Using up to lengths 1, 3, 5 and 7 Cantor sets}
\label{fig:art2}
\end{figure}

Summing up, we obtain the following unconditional result.

\begin{thm}\label{thm:delta}
If a convex hole $H$ has area less than $0.11802$, then $\dimh  \J(H)>0$.
\end{thm}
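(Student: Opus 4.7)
The plan is to argue by contrapositive: assuming $H$ is a convex dimension trap, I would derive that $\mathcal L(H) \ge 0.11802$. By Lemma~\ref{lem:entropy-dim}, $\dimh \J(H) = 0$ is equivalent to the corresponding subshift having zero topological entropy. Each Cantor set $\C(a,b,n) = \{ab^n, ab^{n+1}\}^*$ clearly has positive topological entropy for any fixed finite words $a,b$ with $.a^\infty \ne .b^\infty$ and any $n$, hence its $\pi$-image has positive Hausdorff dimension. Thus if $H$ is a dimension trap, then $\pi(\C(a,b,n)) \cap H \ne \emptyset$ for every such triple $(a,b,n)$. Letting $n \to \infty$, the finite Cantor sets $\C(a,b,n)$ accumulate on the orbit $\C(a,b) = \mathcal O(\dots bbb \cdot abbb \dots)$, which is a countable set with only two accumulation points (the two orbits of $\ldots bbb\cdot bbb\ldots$). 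Consequently, for each $\epsilon > 0$, $H$ must meet the $\epsilon$-neighborhood of every $\C(a,b)$.

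Next I would fix a small threshold $\epsilon = 10^{-10}$ and replace each $\C(a,b)$ by a finite set: those of its points that lie at distance $>\epsilon$ from the two accumulation orbits, together with one representative point within the $\epsilon$-neighborhoods. Enumerate all pairs $(a,b)$ of finite 0-1 words of length at most $8$ (with $.a^\infty \ne .b^\infty$); this yields a finite family $\mathcal F$ of such finite approximants. Any convex dimension trap $H$ must contain at least one point from each finite set associated to a pair in $\mathcal F$. Define $\mathcal T_0$ to be the family of convex polygons obtained by choosing, for every $(a,b)\in\mathcal F$, one point $p_{a,b}$ from the corresponding finite set and taking the convex hull of all chosen points. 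Every convex dimension trap $H$ therefore contains some $T_0 \in \mathcal T_0$, so $\mathcal L(H) \ge \inf_{T_0 \in \mathcal T_0} \mathcal L(T_0)$.

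The proof then reduces to running the enumeration and computing the minimum area attained, which I would implement exactly as in the computations summarized in Table~\ref{tab:delta}. A crucial bookkeeping point is the $\epsilon$-slack: because we used one representative per accumulation neighborhood rather than the full infinite tail, any true dimension trap might shrink each polygon in $\mathcal T_0$ by at most $\epsilon$ along each of four sides (the polygons lie in $[0,1]^2$ and are convex), so the computed minimum must be reduced by $4\epsilon$ to give a valid lower bound. Applying the enumeration to all pairs of words up to length $8$ with no symmetry assumption produces $1253$ minimal polygons; the smallest has area just above $0.11802 + 4\epsilon$, yielding the stated bound.

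The main obstacle is controlling the combinatorial explosion. The number of polygons in $\mathcal T_0$ grows extremely fast with word length, so the search must be pruned aggressively: at every stage I would discard any polygon $T_1 \in \mathcal T_0$ strictly containing another polygon $T_2 \in \mathcal T_0$, and I would impose an a priori cutoff (e.g.\ discarding polygons of area exceeding an upper estimate given by $\mathcal L(P_\infty) \approx 0.1291$) to keep the list manageable. Without such pruning the enumeration is infeasible; with it, the computation terminates and delivers the asserted lower bound.
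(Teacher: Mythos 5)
Your proposal reproduces the paper's own argument: replace each Cantor set $\C(a,b,n)$ by its Hausdorff limit $\C(a,b)$, truncate the countable orbit to a finite set with an $\epsilon$-neighbourhood at each of the two accumulation orbits, enumerate all pairs $(a,b)$ with $|a|,|b|\le 8$, form the family $\T_0$ of convex hulls of one chosen point per pair, prune by containment and by an upper area cutoff of $\approx\mathcal L(P_\infty)$, and subtract $4\epsilon$ to correct for the truncation. This is exactly the method of Section~\ref{sec:comp}, producing the same $1253$ minimal polygons and the same bound $0.11802$, so the approach and the arithmetic are both in agreement with the paper.
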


We note that this is not optimal.
The smallest convex holes $H$ that we know with this property are
    $\Delta, \Delta'$ and $P_\infty$ which have area larger than
    $0.12916155$.
Hence it may be possible to improve this number.

\section{Final remarks and open problems}

It is worth noting that Theorem~\ref{thm:An} and the first three
    claims in Section~\ref{sec:01}
    have been proved in the first author's thesis \cite{Lyn-thesis}.
    
Theorems~\ref{thm:epsilon} and \ref{thm:An} have been generalized
in the second and third authors' recent paper \cite{HS17}. The results
are stated for subshifts; there are also applications to algebraic
toral automorphisms of a certain type.     

Some questions that arise out of this research include
\begin{enumerate}
\item Is it true that each cycle trap for $B$ is a dimension trap as well? It is
known that there exist minimal subshifts (so, in particular, they do not
have any periodic orbits) with positive topological entropy, which suggests
that the answer is probably no. See also the discussion on MathOverFlow \cite{MO-per}.

\item Our approach to finding optimal asymmetric dimension traps
from Section~\ref{ssec:dim trap55b} has been a direct attempt to fuse \cite{GS15} and Section~\ref{sec:55}
-- and
it did not work. Is there a natural family of asymmetric dimension traps leading to a generalization
of $P_\infty$?

\item  Put
\[
\delta(B) =\inf\left\{\mathcal L(H) : H\ \text{is convex, and}\ \dimh  \J(H)=0\right\}.
\]
We have shown that $\delta>0.11802$. What is the actual value of $\delta$?

\item When considering $\delta(B)$ we restrict our search of a single
    convex hole $H$.  We know that if we have no restrictions at all
    that the infimum will be $0$, as shown in Theorem \ref{thm:epsilon}.
    What other restrictions on the search space would result in interesting
    bounds?  For example, as suggested by the referee, what if
    $H$ was a union of $m$ disjoint convex sets for some fixed $m$?

\end{enumerate}

Some possible directions for future related research include:
\begin{enumerate}
\item A hole $H$ is called {\em supercritical} if $\J(H)$ is countable, and $\J(H')$
has positive dimension for any $H'$ whose closure is in $H$. For the doubling map $T$ there
exist non-trivial supercritical holes -- see \cite{Sid14}. For instance, let $\mathsf f=010010100100101001010\dots$ denote
the Fibonacci word defined as $\lim_{n\to\infty} \mathsf f_n$ given by
$\mathsf f_0=0, \mathsf f_1=1$ and $\mathsf f_{n+1}=\mathsf f_n \mathsf f_{n-1}$ for $n\ge0$.
Then $H=(.01\mathsf f, .10\mathsf f)$ is supercritical for $T$. A more general
construction involves Sturmian words. Describing supercritical holes
for the baker's map looks like an interesting problem.

\item One possible generalization of this paper would be the Fibonacci
automorphism $F:\mathbb T^2\to\mathbb T^2$ given by the formula $F(x,y)=(x+y, y)\bmod\mathbb Z^2$.
Put $\mathfrak F=\{(u_n)\in\{0,1\}^{\mathbb Z}: (u_n,u_{n+1})\neq(1,1)\ \text{for all}\ n\in\mathbb Z\}$.
Then $(\mathfrak F, \sigma)$ (equipped with the unique -- Markov -- measure of maximal entropy) is
known to be metrically isomorphic to $(\mathbb T^2, F)$ with the Haar/Lebesgue measure.

It was shown in \cite{SV} that there exists an arithmetic map $\varphi:\mathfrak F\to\mathbb T^2$ which conjugates the shift
$\sigma$ and $F$ and has an expression which is very similar to $\pi$ given by (\ref{eq:pi}). Namely,
\[
\varphi((u_n)_{-\infty}^\infty)=\left(\sum_{n=-\infty}^\infty u_n \frac{\tau^{-n}}{\sqrt5},
\sum_{n=-\infty}^\infty u_n \frac{\tau^{-n-1}}{\sqrt5}\right)\bmod\mathbb Z^2,
\]
where $\tau=(1+\sqrt5)/2$, i.e., the golden ratio. Given that the theory of critical holes for the $\beta$-transformation $[0,1)\to[0,1)$
given by $T_\beta x=\beta x\bmod1$ is similar to that
for the doubling case if $\beta$ is the golden ratio (see \cite{Lyn, Lyn-thesis}), this suggests
that most results of Sections~\ref{sec:01} and \ref{sec:55} could be probably transferred to the Fibonacci automorphism.
(The holes in question should be probably chosen to be geodesically convex.) In particular,
determining a good lower bound for $\delta(F)$ looks like an interesting question.

\item Other kinds of Pisot toral automorphisms ($=$ algebraic automorphisms
of $\mathbb T^m$ given by matrices whose characteristic polynomial is irreducible and has a Pisot root)
have been studied in
\cite{SV-2D} and \cite{Sid-Pisot} (including higher dimensions). Various results
concerning $\beta$-transformations with holes --
with $\beta>2$ \cite{Ag} and $\beta<2$ \cite{Lyn, Lyn-thesis} --
could be possibly used to build a consistent theory.
\end{enumerate}

\section{Appendix}
\label{sec:app}

In this section we consider a disconnected hole, for which the survivor set has
a really nice structure.

Put
\[
\mathcal H=\left\{(x,y)\in X : |x-y|>\frac12\right\}
\]
(see Figure~\ref{fig:b}). Let $\mathcal B$ denote the set of bi-infinite balanced words. (That is, the
words in $\{0,1\}^{\mathbb Z}$ whose every factor is balanced.)
Let $\pi$ be as defined in equation \eqref{eq:pi}.

\begin{figure}
\centering
\centering \unitlength=1.3mm
    \begin{picture}(50,53)(0,0)
            \thinlines
  \path(0,0)(0,50)(50,50)(50,0)(0,0)
    \shade\path(0,25)(0,50)(25,50)(0,25)
    \shade\path(25,0)(50,0)(50,25)(25,0)
    \put(-1,-3){$0$}
    \put(24.5,-3.5){$\frac12$}
    \put(-2,24.5){$\frac12$}
    \put(49,-3){$1$}
    \put(-2,49.5){$1$}
          \end{picture}

          \bigskip

\caption{The hole $\mathcal H$}
\label{fig:b}
\end{figure}
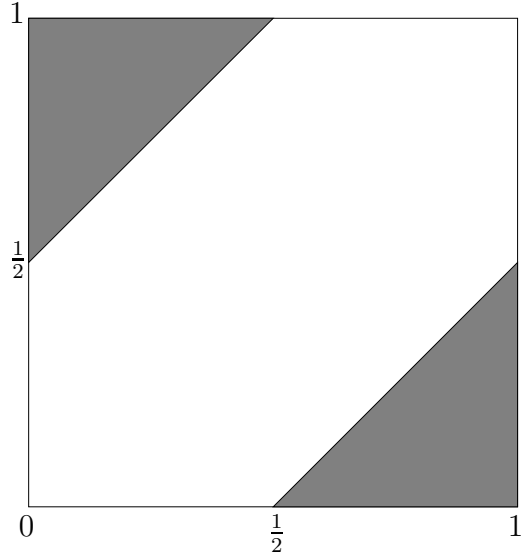

\begin{thm}
With $\mathcal{H}$ and $\mathcal{B}$ as defined above:
\begin{enumerate}
\item We have
\[
\J(\mathcal H)=\pi(\mathcal B).
\]
\item Furthermore, if $u\in\mathcal B$ is aperiodic, then for any $\varepsilon>0$ there exists
$n\in\mathbb Z$ such that $B^n(\pi(u))$ is at a distance less than $\varepsilon$ from
the boundary of $\mathcal H$, so $\mathcal H$ is in some sense optimal with this
property.
\end{enumerate}
\end{thm}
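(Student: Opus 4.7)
The plan is to convert avoidance of $\mathcal{H}$ into a symbolic lexicographic condition at the centre of the bi-infinite word, then identify that condition with balance, and finally use minimality of the Sturmian subshift for part~(2). Writing $\pi(u)=(x,y)$ in the standard way with $x=0.u_1u_2\cdots$ and $y=0.u_0u_{-1}u_{-2}\cdots$, a direct computation of $y-x$ shows that $|x-y|\le\tfrac12$ is automatic when $u_0=u_1$, is equivalent to $u_{-1}u_{-2}\cdots\preceq u_2u_3\cdots$ when $u_0u_1=10$, and to the reversed inequality when $u_0u_1=01$. Call this property $(\ast)$ at the origin; then $\pi(u)\in\J(\mathcal{H})$ iff $(\ast)$ holds at every shift $\sigma^n u$, and the first claim reduces to showing $(\ast)$ at every shift $\iff u\in\mathcal{B}$.

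For $\mathcal{B}\subseteq\pi^{-1}\J(\mathcal{H})$, assume $u$ is balanced and, by translation, look at the origin with $u_0u_1=10$. If $(\ast)$ fails, the outward tails first disagree at some position $k$ with $u_{-k}=1$, $u_{k+1}=0$ and $u_{-j}=u_{j+1}$ for $1\le j\le k-1$. Then the length-$(k+1)$ factors
\[
f_1=u_{-k}u_{-k+1}\cdots u_0,\qquad f_2=u_1u_2\cdots u_{k+1}
\]
have $1$-counts differing by exactly $2$ (matching of the middle letters makes the interior contributions equal), contradicting balance. The reverse inclusion uses the classical Coven--Hedlund characterisation \cite[Chapter~2]{Loth}: $u$ is balanced iff no finite word $w$ appears sandwiched in $u$ both as $0w0$ and as $1w1$. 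Assuming $u$ satisfies $(\ast)$ everywhere while containing such a pair, take $|w|$ minimal; minimality forces $w$ to be a palindrome, so the two occurrences share the same interior. A shift places the $1w1$ so that its right end is a $10$ transition (the letter just after must be $0$, else extending $w$ would contradict minimality), and reading the outward tails at that position, the same palindrome appearing inside the nearby $0w0$ forces the lex inequality in the wrong direction, contradicting $(\ast)$.

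For part~(2), an aperiodic $u\in\mathcal{B}$ is Sturmian with some irrational slope $\alpha\in(0,1)$, and the Sturmian subshift $X_\alpha$ is minimal under $\sigma$, so the $B$-orbit of $\pi(u)$ is dense in $\pi(X_\alpha)$. It therefore suffices to produce one element $v\in X_\alpha$ with $\pi(v)\in\partial\mathcal{H}$, because density then yields the required $n$. I would take the mechanical coding
\[
v_n=\lfloor (n+1)\alpha+\beta\rfloor-\lfloor n\alpha+\beta\rfloor,\qquad \beta=1-\alpha,
\]
with the convention $v_n=1$ iff $\{n\alpha+\beta\}\in[1-\alpha,1)$. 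A short fractional-parts computation gives $v_0=1$, $v_1=0$, and the key reflection $v_{-j}=v_{j+1}$ for all $j\ge1$ via the elementary identity $\{(j+1)\alpha\}\le\alpha\iff\{j\alpha\}\ge 1-\alpha$. Consequently $y=0.v_0v_{-1}v_{-2}\cdots=\tfrac12+0.v_2v_3\cdots=\tfrac12+x$, so $\pi(v)$ lies exactly on the segment $\{y=x+\tfrac12\}\subset\partial\mathcal{H}$.

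The main obstacle is the admissibility $\Rightarrow$ balance direction: the condition $(\ast)$ is per-position whereas balance is a uniform statement about factors, and without the forbidden-pattern packaging from \cite[Chapter~2]{Loth} the case analysis becomes messy. The other ingredients -- the direct counting giving balance $\Rightarrow$ avoidance, and the rotation-coding construction of a boundary point in~(2) -- are short once the conventions are fixed.
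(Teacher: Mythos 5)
Your reduction of avoidance of $\mathcal H$ to the per-position lexicographic condition $(\ast)$ is the correct starting point, and your forward inclusion $\mathcal B\subseteq\pi^{-1}\J(\mathcal H)$ via the two length-$(k+1)$ factors $f_1=u_{-k}\cdots u_0$ and $f_2=u_1\cdots u_{k+1}$ with $|f_1|_1-|f_2|_1=2$ is correct and is essentially the paper's argument, there phrased as the impossibility of $\dots0\widetilde w0\cdot1w1\dots$. Part (2) is also correct and, if anything, cleaner than the paper's: you exhibit an explicit mechanical word $v$ of slope $\alpha$ with the reflection $v_{-j}=v_{j+1}$ for $j\ge1$ (the identity $\lfloor1-j\alpha\rfloor-\lfloor1-(j+1)\alpha\rfloor=\lfloor(j+1)\alpha\rfloor-\lfloor j\alpha\rfloor$ checks out for irrational $\alpha$), so that $\pi(v)$ lies exactly on $\{y=x+\tfrac12\}$, and then invoke minimality of the Sturmian subshift. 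The paper instead uses cutting sequences and Diophantine approximation of a lattice point by the line; the two approaches are close in spirit, and yours avoids the geometric detour.

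The genuine gap is the reverse inclusion in part (1), which you yourself flag as the ``main obstacle''. Placing an occurrence of $1w1$ against a $10$ transition and ``reading the outward tails'' does not by itself contradict $(\ast)$. At that transition the left outward tail begins $w\,1\cdots$ (using palindromy of $w$), but nothing forces the right outward tail to be lexicographically smaller: if the right tail runs $w\,1\cdots$ as well, $(\ast)$ is not violated there, and the mere existence of $0w0$ somewhere in $u$ imposes no constraint at this particular position. The parenthetical ``the letter just after must be $0$, else extending $w$ would contradict minimality'' is also not right: $1w11$ would give $1w'1$ with a longer $w'$, which is no obstruction to minimality of $|w|$ (take $w$ empty in a word containing $1111$). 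The paper's proof is substantially more elaborate precisely to close this gap: by minimality of $n=|w|$, the only unbalanced factors of length $\le n+2$ are $0w0$ and $1w1$; the set of length-$(n+2)$ factors is therefore contained in $\{0w0,1w1\}\cup\mathrm{CYCLE}(w01)\cup\mathrm{CYCLE}(w10)$; every length-$(n+1)$ factor other than $0w,1w$ is always followed by the same letter; this forces a maximal $m\ge0$ with every $0w0$ followed by $(1w0)^m$ and every $1w1$ by $(0w1)^m$, and then the concatenation $0w0(1w0)^m1w1(0w1)^m=0(w01)^mw0\cdot1w(10w)^m1$ (or its mirror) must occur, and its cylinder lies inside $\mathcal H$. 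That locality and rigidity analysis is exactly what your sketch skips, and it is where the real work lies.
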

\begin{proof}\footnote{The proof is a result of a collective effort on MathOverFlow \cite{MO}, and we are
grateful to Ale De Luca and a number of anonymous colleagues who have contributed to it.} (1) Assume
first that $u\in\mathcal B$. Then for any factor $w$ of $u$ we have that $\widetilde w:=REV(w)$ is a factor of $u$ as well
(see \cite{Loth}). Assume without loss of generality that $u_0=1, u_{-1}=0$; then we cannot have
$u = \dots 0\widetilde w 0\cdot 1 w 1\dots$, otherwise $u$ would not be balanced. Hence
\[
\left|\sum_{k=0}^\infty u_{k+N}2^{-k-1}-\sum_{k=1}^\infty u_{-k+N}2^{-k}\right|<\frac12
\]
for all $N\in\mathbb Z$, which means that the orbit of $u$ stays outside $\mathcal H$.

Now assume $u\notin\mathcal B$; then there exists a $w$ such that $w=\widetilde w$ and that $0w0$ and $1w1$ are factors of $u$
(see \cite[Proposition~2.1.3]{Loth}). If $w$ is of minimal length~$n$, say, then $u$ has at most $k+1$ factors of each length $k\le n+1$
(\cite[Proposition~2.1.2]{Loth}). Moreover, $\{0w0,1w1\}$ are the only unbalanced factors of $u$ of length $\le n+2$.

Consequently, the set of factors of length $n+2$ is contained in $\{0w0,1w1\}\ \cup \
\mathrm{CYCLE}(w01)\ \cup\ \mathrm{CYCLE}(w10)$, where $\mathrm{CYCLE}(x)$ stands for
$x$ and all of its cyclic shifts. Also, every factor of $u$ of length $n+1$ except $0w$ and $1w$ always occurs
followed by the same letter.

From this we can derive that there exists an $m\ge0$ such that every occurrence of $0w0$ is followed by $(1w0)^m$ and every
occurrence of $1w1$ is followed by $(0w1)^m$. If such $m$ is chosen to be maximal, then either
\[
0w0(1w0)^m 1w1(0w1)^m=0(w01)^m w01w(10w)^m 1
\]
or
\[
1w1(0w1)^m 0w0(1w0)^m=1(w10)^m w10w(01w)^m 0
\]
is a factor of $u$. Hence the orbit of $u$ falls into $\mathcal H$, since
$[0(w01)^m w0\cdot 1w(10w)^m 1]\subset\mathcal H$
as well as its mirror image.

\medskip\noindent (2) Since $u$ is aperiodic and balanced, it is Sturmian (see \cite[Theorem~2.1.5]{Loth}). Therefore,
by the same theorem, it is a cutting sequence with an irrational slope.

Note that if we have a line $y=ax$,
then clearly its cutting sequence in both directions is the same, i.e., they are each other's reverses.
Hence for any $n\ge1$ there exists $\delta>0$ such that if we have $y=ax+\delta$, then
$u_i=u_{1-i},\ 2\le i \le n$. Note also that if our straight line comes close
to any grid point in $\mathbb Z^2$, we must have $u_1=1, u_0=0$ or vice versa.
It follows from Diophantine approximations that any straight line with an irrational slope
comes arbitrarily close to $\mathbb Z^2$, which implies that for any $\delta$ there exists $N\in\mathbb Z$
such that
\[
\left|\sum_{k=0}^\infty u_{k+N}2^{-k-1}-\sum_{k=1}^\infty u_{-k+N}2^{-k}\right|>\frac12-\delta.
\]
\end{proof}

\end{document}